\def\co{\colon\thinspace}
\DeclareMathAlphabet{\mathsfsl}{OT1}{cmss}{m}{sl}
\newcommand{\alg}{\mathcal{A}}
\newcommand{\CK}{CKh}
\newtheorem{thm}{Theorem}[section]
\newtheorem{lem}[thm]{Lemma}
\newtheorem{cor}[thm]{Corollary}
\newtheorem{prop}[thm]{Proposition}
\newtheorem{theorem}{Theorem}
\newtheorem{proposition}[theorem]{Proposition}
\theoremstyle{definition}
\newtheorem{rem}[thm]{Remark}
\newcommand{\khaction}{f}
\newcommand{\CKr}{CKh^{{r}}}
\newcommand{\Khr}{Kh^{{r}}}
\newcommand{\modzero}{\F[X_0,...,X_{n-1}]/(X_0^2,...,X_{n-1}^2)}
\newcommand{\modzerofrac}{ \frac{\F[X_0,...,X_{n-1}]}{(X_0^2,...,X_{n-1}^2)}}
\newcommand{\curve}{a}
\newcommand{\Curve}{A}
\newcommand{\modred}{\F[X_1,...,X_{n-1}]/(X_1^2,...,X_{n-1}^2)}
\newcommand{\subcube}{(C(L),d)}
\newcommand{\HFmod}{\Lambda^*(H_1(Y;\mathbb Z)/\mathrm{Tors})}
\newcommand{\etab}{\mbox{\boldmath${\eta}$}}
\newcommand{\HF}{HF}
\def\endproofof{\relax\ifmmode\expandafter\endproofmath\else
  \unskip\nobreak\hfil\penalty50\hskip.75em\hbox{}\nobreak\hfil\bull
  {\parfillskip=0pt \finalhyphendemerits=0 \bigbreak}\fi}
\def\endproofofmath$${\eqno\bull$$\bigbreak}
\def\endproof{\relax\ifmmode\expandafter\endproofmath\else
  \unskip\nobreak\hfil\penalty50\hskip.75em\hbox{}\nobreak\hfil\bull
  {\parfillskip=0pt \finalhyphendemerits=0 \bigbreak}\fi}
\def\endproofmath$${\eqno\bull$$\bigbreak}
\def\bull{\vbox{\hrule\hbox{\vrule\kern3pt\vbox{\kern6pt}\kern3pt\vrule}\hrule}}
\newcommand{\R}{\mathbb{R}}
\newcommand{\T}{\mathbb{T}}
\newcommand{\Z}{\mathbb{Z}}
\newcommand{\cm}{\cdot}
\newcommand\SpinC{\mathrm{Spin}^c}
\newcommand{\F}{\mathbb F}
\newcommand\relspinc{\underline{\spinc}}
\newcommand\Filt{\mathcal F}
\newcommand\x{\mathbf x}
\newcommand\s{\mathbf s}
\newcommand\p{\mathbf p}
\newcommand\q{\mathbf q}
\newcommand\y{\mathbf y}
\newcommand\ModSphere{\ModFlow\left({\mathbb S}\longrightarrow 
\Sym^{g-1}(\Sigma_{1})\times \Sym^2(\Sigma_{2})\right)}
\newcommand\ModSpheres\ModSphere
\newcommand\CF{CF}
\newcommand\CFa{\widehat{CF}}
\newcommand\CFp{\CFb}
\newcommand\CFm{\CF^-}
\newcommand\CFinf{CF^\infty}
\newcommand\CFb{CF^+}
\newcommand\HFa{\widehat{HF}}
\newcommand\UnparModSp{\widehat \ModSp}
\newcommand\UnparModFlow\UnparModSp
\newcommand\Mod\ModSp
\newcommand{\spinc}{\mathfrak s}
\newcommand{\spinct}{\mathfrak t}
\newcommand\ModMaps{\mathcal M}
\newcommand\ModSp\ModMaps
\newcommand\Ta{{\mathbb T}_{\alpha}}
\newcommand\Tb{{\mathbb T}_{\beta}}
\newcommand\spincrel\relspinc
\newcommand\HFc{\HF^\circ}
\newcommand\Dual{\mathcal D}
\newcommand\Duality\Dual
\newcommand\ons{Ozsv{\'a}th and Szab{\'o}}
\newcommand\os{{Ozsv{\'a}th--Szab{\'o}}}
\begin{document}

\title{Khovanov module and the detection of unlinks}

\author{{\large Matthew HEDDEN}\\{\normalsize Department of Mathematics, Michigan State University}
\\{\normalsize A338 WH, East Lansing, MI 48824}\\{\small\it Emai\/l\/:\quad\rm mhedden@math.msu.edu}
\\ \\{\large Yi NI}\\{\normalsize Department of Mathematics, Caltech, MC 253-37}\\
{\normalsize 1200 E California Blvd, Pasadena, CA
91125}\\{\small\it Emai\/l\/:\quad\rm yini@caltech.edu}}

\date{}
\maketitle

\begin{abstract}
We study a module structure on Khovanov homology, which we show is natural under the Ozsv\'ath--Szab\'o spectral
sequence to the Floer homology of the branched double cover. As an application, we show that this module structure
detects trivial links.  A key ingredient of our proof is that the
$\Lambda^*H_1$--module structure on Heegaard Floer homology
detects $S^1\times S^2$ connected summands.
\end{abstract}

\section{Introduction}
The  Jones polynomial \cite{Jones} has had a tremendous impact since its discovery, leading to an array of invariants  of knots and $3$--manifolds.    The meaning of these invariants is rather elusive.  In fact it remains unknown whether there exists a non-trivial knot with the same Jones polynomial as the unknot.  

Khovanov discovered a refinement of the Jones polynomial which assigns bigraded homology groups to a link \cite{Kh}.  The Jones polynomial is recovered by taking the Euler characteristic of Khovanov's homology and keeping track of the additional grading by the exponent of a formal variable $q$.    One could hope that the geometry contained in this refinement is more transparent, and a step towards understanding the question above would be to determine whether Khovanov homology detects the unknot.  

The pure combinatorial nature of Khovanov homology, however, makes direct inspection of its ability to detect the unknot quite difficult.      Surprisingly, the most fruitful approach to such questions has been through connections with the world of gauge theory and Floer homology.  Indeed, a recent result of Kronheimer and Mrowka uses an invariant of knots arising from instanton Floer homology to prove that  Khovanov homology detects the unknot.  More precisely,  \cite[Theorem 1.1]{KM2010} states that 
$$\mathrm{rk}\ \Khr(K)=1  \Longleftrightarrow K \mathrm{\ is\ the\ unknot.}$$
Their result had numerous antecedents \cite{Baldwin2010,BaldwinLevine,BaldPlam, GW1, Hedden, HN, TangleUnknotting, Roberts2007}, most aimed at generalizing or exploiting a spectral sequence discovered by \ons \ \cite{OSzBrCov} which begins at Khovanov homology and converges to the Heegaard Floer homology of the branched double cover.  

Kronheimer and Mrowka's theorem raises the natural question of whether Khovanov homology also detects an unlink of numerous components.  Interest in this question is heightened by the existence of infinite families of links which have the same Jones polynomial as unlinks with two or more components \cite{EKT,Thistlethwaite}.  One immediately observes that the rank of Khovanov homology  does not detect unlinks.  This is demonstrated  by the Hopf link, which has the same rank   Khovanov homology as the $2$--component unlink (see Subsection \ref{subsec:hopf} for a discussion).    Thus more information   must be brought to the table if one hopes to use Khovanov homology to detect unlinks. Using a small portion of the bigrading on Khovanov homology we were able to make initial progress on the question of unlink detection in \cite{HN}.  There we   showed  \cite[Corollary~1.4]{HN}
$$ Kh(L) \text{\ detects the unknot }\Longleftrightarrow\newline Kh(L) \text{\ detects the two-component unlink}$$
\noindent which, together with Kronheimer and Mrowka's theorem settles the question for unlinks of two components.   Unfortunately, the strategy in \cite{HN} could not be extended.  

The purpose of this article is to show that Khovanov homology detects unlinks by exploiting a module structure inherent in the theory.  In the next section we define a module structure on the Khovanov chain complex, and prove that the induced module structure on the Khovanov homology groups with $\F=\Z/2\Z$ coefficients is an invariant of the link.  More precisely, we have 
\begin{proposition} \label{thm:module}  Let $L\subset S^3$ be a link of $n$ components.  The  Khovanov homology $Kh(L;\F)$ is a module over the ring
 $$\F[X_0,...,X_{n-1}]/(X_0^2,..., X_{n-1}^2).$$ The isomorphism type of this module is an invariant of $L$.
 \end{proposition}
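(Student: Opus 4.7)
The plan is to construct the module action at the chain level using basepoints on each link component, to verify the algebra relations locally, and then to establish invariance under changes of basepoints and of diagram.

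Fix a diagram $D$ for $L$, and choose a basepoint $p_i$ on each component $L_i$, disjoint from the crossings of $D$. At each vertex $v$ of the cube of resolutions, the basepoint $p_i$ lies on a unique resolution circle $C_i^v$. Define an endomorphism $X_i\co \CK(D;\F)\to\CK(D;\F)$ to act on each summand $V^{\otimes k_v}$ (with $V = \F[X]/(X^2)$) as multiplication by $X$ on the tensor factor assigned to $C_i^v$, and as the identity on the remaining factors. To confirm that $X_i$ is a chain map I would check commutation with each edge map of the cube, which is the Frobenius multiplication $m$ or comultiplication $\Delta$ tensored with identity elsewhere. Commutation with $m$ is immediate from commutativity of $V$; commutation with $\Delta$ is a direct calculation using $\Delta(1) = 1\otimes X + X\otimes 1$, $\Delta(X) = X\otimes X$, and $X^2 = 0$, which yields the same answer regardless of which post-split circle inherits the basepoint. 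The algebra relations $X_i^2 = 0$ and $X_i X_j = X_j X_i$ then hold at the chain level because $X^2 = 0$ in $V$ and multiplications on any tensor factors commute pairwise.

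It remains to show that the induced action on $Kh(L;\F)$ is independent of the basepoint and the diagram. For basepoint independence, any two basepoints $p_i, p_i'$ on the same component $L_i$ are joined by a path in $L_i$, so it suffices to analyse moving the basepoint across a single crossing of $D$. On the subcube associated to that crossing the two chain-level actions $X_i$ and $X_i'$ agree on vertices where $p_i$ and $p_i'$ share a resolution circle and differ on vertices where they are separated into two circles. I would construct an explicit chain homotopy $H$ built from the Frobenius multiplication $m$ on the two separating tensor factors, and verify locally that $\partial H + H\partial$ equals the difference of the two actions. For diagram independence, pick all basepoints outside the support of a given Reidemeister move; the standard Khovanov chain homotopy equivalences implementing Reidemeister invariance then act by the identity (or by a Frobenius-algebra map which commutes with multiplication by $X$) on the tensor factors of the basepoint-carrying circles, hence intertwine the $X_i$ action and descend to $\modzerofrac$-module isomorphisms on homology.

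The main obstacle I foresee is the bookkeeping required to verify that the chain homotopy $H$ constructed on a single crossing's subcube satisfies the full chain-homotopy identity once its interaction with edge maps at other crossings is taken into account. This reduces to local naturality checks for the Frobenius structure maps $m$ and $\Delta$, and is a routine, if slightly tedious, calculation on the cube of resolutions.
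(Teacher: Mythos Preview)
Your proposal is correct and matches the paper's approach closely: the paper likewise defines the action via basepoints, verifies the relations locally, handles Reidemeister moves away from the basepoints by the standard chain homotopy equivalences, and reduces basepoint-independence to sliding across a single crossing via an explicit chain homotopy. The only refinement is that the paper's homotopy $H$ is the full reverse elementary cobordism map from the $1$-resolution to the $0$-resolution at that crossing (so it uses $\Delta$ as well as $m$, via the identities $m\Delta=0$ and $\Delta m(a\otimes b)=Xa\otimes b+a\otimes Xb$), and the cross-terms with edge maps at the other crossings cancel because they assemble into $\partial^2=0$ for the cube of the diagram with that crossing changed---exactly the bookkeeping you anticipated.
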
  The action of $X_i$ is defined analogously to reduced Khovanov homology \cite{KhPatt}, and is obtained from the chain map  on $CKh(K)$ induced by the link cobordism which merges an unknotted circle to the $i$-th component of $L$.    Our main theorem shows that the Khovanov module detects unlinks:

\begin{theorem}\label{thm:main}  Let $L$ be a link of $n$ components.  If there is an isomorphism of modules  $$Kh(L;\F)\cong \F[X_0,...,X_{n-1}]/(X_0^2,..., X_{n-1}^2),$$ then $L$ is the unlink.
\end{theorem}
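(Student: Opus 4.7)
The plan is to use the Ozsv\'ath--Szab\'o spectral sequence to transfer the hypothesized Khovanov module structure to the Heegaard Floer homology of the branched double cover $\Sigma(L)$, apply the $\Lambda^* H_1$-detection of $S^1\times S^2$ summands promised in the abstract, and then pull the conclusion about $\Sigma(L)$ back to the statement that $L$ is the unlink. Under the hypothesis $Kh(L;\F)\cong \F[X_0,\ldots,X_{n-1}]/(X_0^2,\ldots,X_{n-1}^2)$, the total rank is $2^n$, and the reduced theory $\widetilde{Kh}(L;\F)$ (obtained by taking homology with respect to the $X_0$-action) has rank $2^{n-1}$ and carries the remaining action of $\F[X_1,\ldots,X_{n-1}]/(X_1^2,\ldots,X_{n-1}^2)$ in exactly the same exterior-algebra form.

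Next, the Ozsv\'ath--Szab\'o spectral sequence runs from $\widetilde{Kh}(\overline L;\F)$ to $\widehat{HF}(\Sigma(L);\F)$, and by the naturality of the module structure set up in Section~2 the $X_i$-actions descend through every page. On the $E_\infty$-page they identify with the action of $n-1$ independent generators of $\Lambda^*(H_1(\Sigma(L);\Z)/\Tors)$. Convergence forces $\mathrm{rk}\,\widehat{HF}(\Sigma(L))\le 2^{n-1}$, while the general bound $\mathrm{rk}\,\widehat{HF}(Y)\ge 2^{b_1(Y)}$ together with the surviving $n-1$ independent actions forces $b_1(\Sigma(L))=n-1$ and $\widehat{HF}(\Sigma(L))\cong \Lambda^*\F^{n-1}$ as a $\Lambda^*H_1/\Tors$-module. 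The key detection theorem announced in the abstract then yields $\Sigma(L)\cong \#^{n-1}(S^1\times S^2)$.

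Finally, one must pass from this identification of $\Sigma(L)$ back to $L$. The plan is to promote an embedded family of essential spheres in $\#^{n-1}(S^1\times S^2)$ to a family that is equivariant under the covering involution, and then to quotient down to a family of splitting spheres in $S^3$ for $L$; this exhibits $L$ as a split union of $n$ knots whose individual branched double covers are each $S^3$. Applying Kronheimer--Mrowka's unknot detection theorem \cite{KM2010} component by component then shows each knot is the unknot, so $L$ is the $n$-component unlink. The main obstacle is this last step --- producing the equivariant sphere decomposition and pinning down the covering involution on $\#^{n-1}(S^1\times S^2)$ whose quotient is $S^3$; the rigidity already extracted from the module structure should be enough to run a standard equivariant sphere theorem argument (in the spirit of Kim--Tollefson), but carrying this out carefully, together with verifying that the induced Khovanov module on each sublink split off still satisfies the hypothesis, is where the real work lies.
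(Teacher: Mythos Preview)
Your overall architecture is correct and matches the paper's: pass the module hypothesis through the spectral sequence to $\widehat{HF}(\Sigma(L))$, invoke the $S^1\times S^2$-detection result, deduce $L$ is split, and finish with Kronheimer--Mrowka. But there are two genuine gaps and one place where you make life harder than necessary.

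\textbf{First gap (step 2).} Your argument that $\widehat{HF}(\Sigma(L))\cong\Lambda^*\F^{n-1}$ \emph{as a module} is incomplete. A rank count together with ``surviving actions'' is not enough: in general the module structure on $E_\infty$ is only the associated graded of the module structure on $\widehat{HF}(\Sigma(L))$, because the filtered chain maps $a^{\zeta_i}$ can have higher-order pieces. The paper's Proposition~\ref{collapse} handles this by observing that the hypothesis forces $\Khr(\overline L)$ to sit in a \emph{single homological grading} (the generator $\mathbf 1$ lies in grading zero and the $X_i$-action preserves it). Since homological grading is the filtration grading, the spectral sequence collapses at $E_2$ and, more to the point, the higher-order summands of $a_\infty^{\zeta_i}$ vanish identically. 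That is what pins down the module structure on $\widehat{HF}(\Sigma(L))$; you should make this observation explicit rather than rely on a rank/$b_1$ sandwich.

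\textbf{Second gap (step 3).} The detection theorem as stated in the paper (Theorem~\ref{thm:HFmodUnique}) does not give $\Sigma(L)\cong\#^{n-1}(S^1\times S^2)$; it gives $\Sigma(L)\cong M\#\big(\#^{n-1}(S^1\times S^2)\big)$ with $M$ an integer homology sphere satisfying $\widehat{HF}(M)\cong\F$. You cannot rule out $M$ at this stage (and you don't need to --- it disappears a posteriori once you know every component is unknotted). Your claim that the individual branched double covers are each $S^3$ is therefore unjustified where you place it.

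\textbf{Unnecessary difficulty (step 4).} You propose an equivariant sphere theorem to produce splitting spheres in $S^3$. This can be made to work, but the paper's route is much lighter: it quotes the elementary fact (\cite[Proposition~5.1]{HN}) that $\Sigma(J_1\sqcup J_2)\cong\Sigma(J_1)\#\Sigma(J_2)\#(S^1\times S^2)$ and that a non-split link has no $S^1\times S^2$ summand in its branched double cover. Counting $S^1\times S^2$ summands then forces $L=L_0\sqcup\cdots\sqcup L_{n-1}$ immediately, with no equivariant topology needed. After that, the K\"unneth formula $Kh(J_1\sqcup J_2)\cong Kh(J_1)\otimes Kh(J_2)$ gives $\mathrm{rk}\,Kh(L_i)=2$ for each $i$, and \cite{KM2010} finishes as you say.
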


Theorem~\ref{thm:main}  will be proved in the context of the spectral sequence from Khovanov homology to the Heegaard Floer homology of the branched double cover of $L$.  This latter invariant also has a module structure.  Indeed $\widehat{HF}(Y)$ is   a module over the exterior algebra on $H^1(Y;\F)$, the first singular cohomology of $Y$ with coefficients in $\F$.    A key ingredient in our proof, Theorem \ref{thm:Khnaturality}, is to refine  \os's spectral sequence to incorporate both module structures.    A consequence of this result is the following theorem, which indicates the flavor of our refinement:

\begin{theorem}\label{sskhmodule}  There is a spectral sequence of modules, starting at the reduced Khovanov module of the mirror of a link and converging to the Floer homology of its branched double cover.  \end{theorem}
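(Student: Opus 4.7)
The plan is to deduce this theorem from Theorem \ref{thm:Khnaturality}, the refinement of the \os\ spectral sequence that carries the Khovanov module structure of Proposition \ref{thm:module} through to the $\Lambda^* H_1$-module structure on $\HFa$ of the branched double cover. First I would recall the framework: the \os\ spectral sequence is constructed from the cube of complete resolutions of a diagram for $\bar L$. Each vertex $v$ of the cube corresponds to an unlink $L_v = U_{k_v}$ whose branched double cover is $\#^{k_v - 1}(S^1 \times S^2)$; the Khovanov chain group at $v$ is identified with the Floer chain group of this manifold, and the edge maps of the cube correspond to Floer-theoretic cobordism maps. Theorem \ref{thm:Khnaturality} upgrades this identification to one of modules, so that every differential in the resulting spectral sequence is equivariant with respect to the action of $\F[X_0, \ldots, X_{n-1}]/(X_0^2, \ldots, X_{n-1}^2)$ on the Khovanov side, which converges to the exterior algebra action on the Floer side.

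Next, I would fix a basepoint on the $0$-th component of $L$. On the Khovanov side, this singles out the generator $X_0$, and the reduced complex $\widetilde{CKh}(\bar L;\F)$ can be identified with the image of multiplication by $X_0$ inside $CKh(\bar L;\F)$ (equivalently, its kernel, since $X_0^2 = 0$). On the Floer side, the basepoint is precisely what is needed to define $\HFa(\Sigma(L);\F)$. Because Theorem \ref{thm:Khnaturality} makes every differential in the spectral sequence $X_0$-equivariant, the image of $X_0$ is preserved on every page, and restricting produces a spectral sequence
$$\widetilde{Kh}(\bar L;\F) \Longrightarrow \HFa(\Sigma(L);\F)$$
of modules over the quotient ring $\F[X_1, \ldots, X_{n-1}]/(X_1^2, \ldots, X_{n-1}^2)$, with the limiting action factoring through the natural map to $\Lambda^*(H_1(\Sigma(L);\F)/\Tors)$. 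This is exactly the assertion of the theorem.

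The real content, and the main obstacle, therefore lies in establishing Theorem \ref{thm:Khnaturality} itself rather than in this reduction. The hard part will be to verify that the combinatorially defined action of $X_i$ on the Khovanov cube---induced by the merge cobordism attaching a small unknot adjacent to the $i$-th component---matches, under the vertex-by-vertex identification with $\HFa(\#^{k_v - 1}(S^1 \times S^2))$, the action of a distinguished $1$-cycle representing the homology class of the new handle that this merging produces in the branched double cover. Once that vertex-wise agreement is in place, one must check naturality with respect to the elementary surgery cobordisms labelling the edges of the cube, which amounts to comparing two descriptions of the same $1$-handle trace; granting this, the reduction described above delivers Theorem \ref{sskhmodule}.
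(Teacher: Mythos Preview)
Your overall strategy---deduce the theorem from Theorem~\ref{thm:Khnaturality}---is the same as the paper's, and your final paragraph correctly identifies the heart of that result.  But the execution has two issues worth flagging.

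First, the reduction step via $X_0$ is misplaced.  Theorem~\ref{thm:Khnaturality} does not furnish an unreduced spectral sequence with an $n$-variable action that you then cut down; it already identifies the $E_1$ page with the \emph{reduced} Khovanov complex $\CKr(\overline D)$, and the filtered chain maps it supplies are the $\curve^{\zeta_i}$ for $i=1,\dots,n-1$ coming from arcs $t_i$ joining $p_0$ to $p_i$.  There is no $X_0$ in play: the basepoint $p_0$ is absorbed into the very construction of the filtered complex $(C(D),d)$ (the branched double cover of a $k$-component unlink is $\#^{k-1}(S^1\times S^2)$, whose Floer homology already has rank $2^{k-1}$).  So the passage ``restricting to the image of $X_0$'' is neither needed nor meaningful here.

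Second, and more substantively, you assert that Theorem~\ref{thm:Khnaturality} makes every differential equivariant for the \emph{quotient} ring $\modred$, but that is not what it gives you.  It only produces, for each $i$, a filtered chain map $\curve^{\zeta_i}$, and hence a morphism $\alpha_r^{\zeta_i}$ on each page.  A priori this is only an action of the free group algebra $\F[F_{n-1}]$; nothing guarantees that $\alpha_r^{\zeta_i}\circ\alpha_r^{\zeta_i}=0$ or that the $\alpha_r^{\zeta_i}$ commute on the nose at the chain level.  The paper's argument (Corollary~\ref{E_2}) fills this gap: on $E_1$ the induced maps coincide with the Khovanov basepoint maps $x_i$, which \emph{do} satisfy $x_i^2=0$ and $x_ix_j=x_jx_i$; and once an endomorphism of a spectral sequence vanishes on one page it vanishes on all subsequent pages, since $\alpha_{r+1}=(\alpha_r)_*$.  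This propagation step is short but essential, and your write-up should include it.
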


   Armed with this structure, we show that if the Khovanov module is isomorphic to that of the unlink, then the  Floer homology of the branched double cover is  isomorphic to  $\F[X_1,...,X_{n-1}]/(X_1^2,..., X_{n-1}^2)$ as a module (see Proposition \ref{collapse}).  The second main ingredient in our proof is the following theorem, which says that Floer homology detects $S^1\times S^2$ summands in the prime decomposition of a $3$--manifold.

\begin{theorem}\label{thm:HFmodUnique}
Suppose that $\widehat{HF}(Y;\F)\cong \F[X_1,...,X_{n-1}]/(X_1^2,..., X_{n-1}^2)$ as a module.  Then $Y\cong M\#(\#^{n-1}(S^1\times S^2))$, where $M$ is an integer homology sphere satisfying $\HFa(M)\cong \F$.
\end{theorem}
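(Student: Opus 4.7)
The plan is to use the $\Lambda^*H_1$-action to reverse-engineer a Kneser--Milnor decomposition of $Y$. First, from the hypothesis that $\widehat{HF}(Y;\F)$ is isomorphic as a module to the regular representation $\Lambda^*\F^{n-1}$, and since the acting ring is $\Lambda^*\big(H_1(Y;\Z)/\Tors\otimes\F\big)$, matching the acting algebra forces $b_1(Y) = n-1$.

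Next I would invoke the prime decomposition theorem to write $Y = Y_0 \# (\#^k S^1\times S^2)$ where $Y_0$ contains no $S^1\times S^2$ factor. The connected-sum Künneth formula for Heegaard Floer homology gives
\begin{equation*}
\widehat{HF}(Y;\F)\cong \widehat{HF}(Y_0;\F)\otimes_\F \Lambda^*\F^k
\end{equation*}
as a module over $\Lambda^*H_1(Y;\F) = \Lambda^*H_1(Y_0;\F)\otimes \Lambda^*\F^k$. Matching this against the hypothesis forces $\widehat{HF}(Y_0;\F)$ to be free of rank one as a $\Lambda^*H_1(Y_0;\F)$-module, with $b_1(Y_0) = n-1-k$.

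The main obstacle is showing $b_1(Y_0) = 0$. Further prime-decomposing $Y_0 = M_1\#\cdots\#M_j$ into irreducible pieces (none equal to $S^1\times S^2$) and applying Künneth again reduces this to the claim that an irreducible $3$-manifold $M$ with $b_1(M) > 0$ cannot have $\widehat{HF}(M;\F)$ free of rank one over $\Lambda^*H_1(M;\F)$. I would establish this by appealing to Ozsv\'ath--Szab\'o's theorem that $\widehat{HF}$ detects the Thurston norm: if $M$ contains an essential surface of positive Thurston norm, then $\widehat{HF}(M)$ is supported on multiple spin$^c$ structures, incompatible with the single-(torsion)-spin$^c$ support of a free $\Lambda^*H_1$-module. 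For the residual case in which every essential class is represented by tori (and the Thurston norm vanishes), I would use the taut foliation guaranteed by Gabai's theorem together with Ozsv\'ath--Szab\'o's non-vanishing theorem for the spin$^c$ structure associated to a taut foliation to obtain the same contradiction, or else argue directly from the JSJ decomposition of the graph-manifold or torus-bundle pieces that would arise.

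Once $b_1(Y_0)=0$ is in hand, the Künneth decomposition forces $\widehat{HF}(Y_0;\F) \cong \F$ and $k = n-1$. Since $Y_0$ is then a rational homology sphere of minimal Floer rank, the standard inequality $\Rk_\F\widehat{HF}(Y_0;\F) \geq |H_1(Y_0;\Z)|$ collapses to $|H_1(Y_0)| = 1$, exhibiting $Y_0 = M$ as an integer homology sphere with $\widehat{HF}(M) \cong \F$, as required.
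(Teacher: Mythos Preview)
Your overall strategy---split off the $S^1\times S^2$ summands via K\"unneth, then show an irreducible $M$ with $b_1(M)>0$ cannot have $\widehat{HF}(M;\F)$ free of rank one over $\Lambda^*H_1(M;\F)$---is the same reduction the paper performs.  Your case~(a), where $M$ carries a class of positive Thurston norm, is fine: the genus--bounds theorem together with conjugation symmetry forces support in at least two $\SpinC$ structures, while the regular $\Lambda^*H_1$--module, being indecomposable, must live in one.

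The gap is case~(b), vanishing Thurston norm.  Here your proposed arguments do not go through.  Gabai's taut foliation has a torus leaf, so the associated canonical $\SpinC$ structure has $\langle c_1,[R]\rangle=0$ and the Ozsv\'ath--Szab\'o non-vanishing theorem yields nothing beyond a single $\SpinC$ structure; you do not get ``the same contradiction.''  More seriously, the \emph{untwisted} contact invariant of a weakly fillable contact structure can vanish (there are examples on torus bundles), so one cannot run the contact--class argument over $\F$.  And ``argue directly from the JSJ decomposition'' is not a proof: irreducible $3$--manifolds with $b_1>0$ and identically zero Thurston norm form a large class (not just torus bundles), and establishing the module statement for all of them would be a substantial undertaking, not a residual case.

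This is exactly why the paper passes to $\omega$--twisted coefficients.  Theorem~\ref{thm:ContNonzero} and Corollary~\ref{cor:ConnNonzero} show that, for a suitable twist, $H_*\big(\underline{\widehat{HF}}(Y;\mathcal R_{[\omega]}),\Curve^{[\zeta]}\big)$ has positive $\mathcal R$--rank for \emph{every} $[\zeta]$; the proof uses that the twisted contact invariant is non-torsion even when the untwisted one may vanish.  The paper then descends to $\F$ via the universal coefficients sequence and Lemma~\ref{lem:Tor}, carefully analyzing the maps $\mu_*$ and $\tau_*$ to derive a contradiction with the assumed module structure.  That passage from twisted to untwisted is the technical heart of the argument, and something of comparable strength is needed to close your case~(b).
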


\noindent This theorem seems  interesting in its own right, and complements an array of results on the faithfulness of the Floer invariants for particularly simple manifolds. \

 The proof of the main theorem  follows quickly from Theorems \ref{sskhmodule} and \ref{thm:HFmodUnique}.   Indeed, if the Khovanov module of  $L$ is isomorphic to that of the unlink, then the two results imply that the  branched double cover of $L$ is homeomorphic to the connected sum of $S^1\times S^2$'s with an integer homology sphere whose Floer homology has rank one.  Using classical tools from equivariant topology, we then see that $L$ is a split link, each component of which has the Khovanov homology of the unknot.  Kronheimer and Mrowka's theorem then tells us that each component is unknotted.

\vskip0.1in
\noindent {\bf Outline:} This paper is organized as follows. In Section~2 we will review Khovanov homology and its module structure.  There we prove that the module structure is an invariant. In Section~3 we will give the necessary background on Heegaard Floer homology, especially the module structure and an $A_{\infty}$ type relation. In Section~4 we relate the Heegaard Floer module structure with \ons's link surgeries spectral sequence. As an application, we connect the module structure on Khovanov homology of a link with the module structure on the Heegaard Floer homology of the branched double cover of the link. Section~5 is devoted to a nontriviality theorem for the module structure on Heegaard Floer homology. The proof is similar to the standard nontriviality theorem in Heegaard Floer theory. The main theorem is proved in Section~6, using the results in the previous two sections as well as a homological algebra argument.

\

\noindent{\bf Acknowledgements.}\quad This work was initiated
when the authors participated the ``Homology Theories of Knots and
Links'' program at MSRI, and was carried out further when the authors visited Simons Center for Geometry and Physics. We are grateful to Ciprian Manolescu and
Tomasz Mrowka for helpful conversations. We also wish to thank Robert Lipshitz, Sucharit Sarkar and the referee for pointing out a mistake in the proof of Proposition~\ref{prop:KhInv}. Special thanks are due to Sucharit Sarkar for suggesting a way to fix the mistake. 
The first author was partially supported by NSF grant numbers DMS-0906258 and DMS-1150872 and an Alfred P. Sloan Research Fellowship. The second author was
partially supported by an AIM Five-Year Fellowship, NSF grant
numbers DMS-1021956, DMS-1103976, and an Alfred P. Sloan Research Fellowship.  

\section{Preliminaries on Khovanov homology}
\label{sec:Kh}

Khovanov homology is a combinatorial invariant of links in the $3$--sphere which refines the Jones polynomial.  In this section we briefly recall some background on this invariant, but will assume familiarity with \cite{Kh}.   Our primary purpose is to establish notation and define a module structure on the Khovanov chain complex (or homology) which is implicit in \cite{Kh,KhPatt}, but which has not attracted much attention.    

To a diagram $D$ of a link $L\subset S^3$, Khovanov associates a bigraded cochain complex, $(\CK_{i,j}(D),\partial)$ \cite{Kh}.   The $i$--grading  is the cohomological grading, in the sense that is raised by one by the coboundary operator $$\partial\co \CK_{i,j}(D)\rightarrow \CK_{i+1,j}(D).$$   The $j$--grading is the so-called ``quantum" grading, and is preserved by the differential.  One can think of this object as a collection of complexes in the traditional sense, with the collection indexed by an additional grading.  The homology of these complexes does not depend on the particular diagram chosen for $L$, and produces an invariant $$Kh(L):=\bigoplus_{i,j} Kh_{i,j}(L)$$ called the {\em Khovanov (co)homology of L}.  Taking the Euler characteristic  in each quantum grading, and keeping track of this with a variable $q$, we naturally obtain a Laurent polynomial 
$$V_K(q)=\sum_j {\big(}\sum_i(-1)^{i}\mathrm{rank}\ Kh_{i,j}(L){\big)}\cdot q^j.$$
This polynomial agrees with the (properly normalized) Jones polynomial.  

The complex $\CK(D)$ is obtained by applying a $(1+1)$--dimensional TQFT to the hypercube of complete resolutions of the diagram, and the algebra assigned to a single unknotted circle by this structure is $\alg=\F[X]/(X^2)$. The product on this algebra is denoted $$m\co \mathcal A\otimes\mathcal A\to\mathcal A.$$ There is also a coproduct $$\Delta\co\mathcal A\to\mathcal A\otimes\mathcal A$$ which is defined by letting
$$\Delta(\mathbf 1)=\mathbf 1\otimes X+X\otimes\mathbf 1,\qquad\Delta(X)=X\otimes X.$$
  Our purposes will not require strict knowledge of gradings, and for convenience we relax Khovanov homology to  a relatively $\Z\oplus\Z$--graded theory.  This means that we consider Khovanov homology up to overall shifts in either the homological or quantum grading.    In these terms,  the quantum grading of $\mathbf 1\in\alg$ is two greater than that of $X$, and the homological grading is given by the number of crossings resolved with a $1$--resolution, in a given complete resolution.

Section 3 of \cite{KhPatt} describes a module structure on the Khovanov homology of a knot which we now recall.  Given a diagram $D$ for a knot $K$, let $p\in K$ be a marked point. Now place an unknotted circle next to $p$ and consider the saddle cobordism that merges the circle with a segment of $D$ neighboring $p$.   Cobordisms induce maps between Khovanov complexes and, as such, we have a map $$\alg\otimes \CK(D)\overset{\khaction_p}\longrightarrow \CK(D).$$  

\begin{figure}
\centering
\def\svgwidth{2.5in}
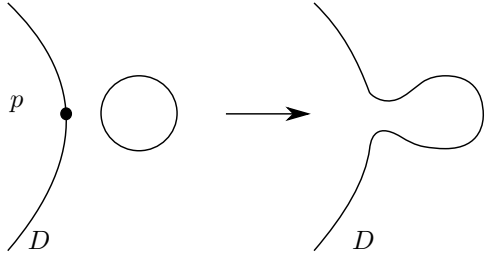
\caption{\label{fig:merge}  Merging an unknotted circle with $D$.}

\end{figure}

\noindent Explicitly, the map is  the algebra multiplication $\alg\otimes \alg\rightarrow \alg$ between vectors associated to the additional unknot and the unknot in each complete resolution of the diagram which contains the marked point. We denote the induced map on homology by $F_p$.  It is independent of the pair $(D,p)$, in a sense made precise by the following proposition.

\begin{prop} {\em(\cite{KhPatt})} \label{prop:red} Let $D$ and $D'$ be diagrams for a knot $K$, and let $p\in D$, $p'\in D'$ be marked points on each diagram.  Then there is a commutative diagram:
$$
\begin{CD}
 \alg\otimes \CK(D) @>\khaction_{p}>>  \CK(D)\\
@VV1\otimes eV @VVeV\\
 \alg\otimes \CK(D') @>\khaction_{p'}>>  \CK(D'),
\end{CD} 
$$
where $e$ is a chain homotopy equivalence.
\end{prop}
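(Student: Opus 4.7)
The plan is to invoke the functoriality of the Khovanov chain complex under link cobordisms (Jacobsson; Khovanov--Bar-Natan), which asserts that an oriented cobordism $\Sigma\subset[0,1]\times S^3$ between links $L_0$ and $L_1$ induces a chain map $\CK(D_0)\to\CK(D_1)$ whose chain-homotopy class depends only on the isotopy class of $\Sigma$ rel boundary. Since we are working over $\F=\Z/2\Z$, no sign ambiguity arises.

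First, I would reinterpret $\khaction_p$ as the chain map assigned to a \emph{merge cobordism} $C_p$: introduce a small unknotted circle $U$ next to the marked point $p\in D$ and attach a single saddle joining $U$ to the arc of $K$ through $p$. Under the natural identification $\CK(D\sqcup U)\cong\alg\otimes\CK(D)$, the Frobenius multiplication used to define $\khaction_p$ is exactly the chain map the $(1+1)$-TQFT associates to $C_p$. The same construction at $p'\in D'$ yields $\khaction_{p'}$ from a merge cobordism $C_{p'}$.

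Next, I would realize the chain homotopy equivalence $e\colon \CK(D)\to \CK(D')$ as the chain map assigned to the identity cobordism $[0,1]\times K$, presented as a composition of the chain maps associated to a fixed sequence of Reidemeister moves carrying $D$ to $D'$. Then $1\otimes e$ corresponds to the identity cobordism on $K\sqcup U$ with the $U$-factor held constant. The two ways around the claimed square therefore correspond to two cobordisms $K\sqcup U\to K$: in one we first merge $U$ into $K$ at $p$ and then isotope $D$ to $D'$; in the other we first isotope $D$ to $D'$ (with $U$ sitting off to the side) and then merge $U$ at $p'$. These cobordisms are isotopic rel boundary, because the position of the merge saddle can be slid along the trace of the identity isotopy from near $p$ to near $p'$. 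Functoriality then gives the desired homotopy commutativity.

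The main technical obstacle is the case in which one of the chosen Reidemeister moves is supported in a disk meeting the arc through $p$. In that situation one must first isotope the marked point out of the affected disk, which reduces the question to showing that $\khaction_p$ is chain-homotopy independent of the precise position of $p$ on a fixed arc of a fixed diagram. This marked-point-slide is the delicate ingredient of the argument in \cite{KhPatt}; it follows from the fact that performing the merge saddle at two nearby points of the same arc yields cobordisms that are isotopic rel boundary, hence induce chain-homotopic maps. Combining this slide invariance with Reidemeister invariance, applied after a preliminary isotopy separating $p$ and $p'$ from the supports of all Reidemeister moves used, yields the full proposition.
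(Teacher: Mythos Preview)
Your cobordism-functoriality argument is correct and self-contained: once you observe that the two composed cobordisms $K\sqcup U\to K$ differ only by sliding the merge saddle along the connected cylinder $[0,1]\times K$, Jacobsson's theorem gives homotopy commutativity and you are done. However, this route is both heavier than and genuinely different from the paper's.

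The paper's proof avoids cobordism functoriality entirely. It uses only the elementary observation that, because $K$ is a \emph{knot}, any sequence of Reidemeister moves from $D$ to $D'$ can be replaced by one whose moves are all supported away from the basepoint: whenever a strand would sweep across $p$, drag it the other way around the $2$--sphere instead. For such a sequence the induced equivalence $e$ commutes \emph{strictly} with $f_p$, since the merge depends only on which circle of each resolution contains $p$, and that circle is untouched. No chain homotopies are needed. What your approach buys is conceptual uniformity with the cobordism picture; what the paper's approach buys is that one needs nothing beyond the Reidemeister-invariance maps already built into Khovanov's construction.

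Your final paragraph is unnecessary and slightly off. Moving $p$ along a fixed arc of a fixed diagram does not change $f_p$ at all---$f_p$ depends only on the arc, not the point---so there is nothing to prove there, delicate or otherwise. The genuinely subtle step is sliding a basepoint \emph{past a crossing}; for knots the paper's rerouting trick sidesteps this completely, and it only becomes unavoidable for links with several basepoints (Proposition~\ref{prop:KhInv} and Lemma~\ref{lem:xp=xq} later in the paper). In your framework this issue never arises anyway, since the saddle-slide along the connected cylinder already handles arbitrary $p$ and $p'$.
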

As observed in \cite{KhPatt}, the proof is quite simple provided one knows the proof of invariance for Khovanov homology.  According to \cite{Kh}, a sequence of Reidemeister moves from $D$ to $D'$ induces a chain homotopy equivalence $e$ between the associated complexes.  If these moves occur in the complement of a neighborhood of $p$, then $e$ obviously commutes with the map $\khaction_p$, where $p$ is regarded as a point in both $D$ and $D'$.  Now it suffices to observe that Reidemeister moves which cross under or over $p$ can be traded for a sequence of moves which do not (simply drag the segment of knot in question the opposite direction over the plane). 

 Thus  a choice of marked point endows the Khovanov homology of a knot with the structure of an $\alg$--module.    One immediate application of this module structure is that it allows for the definition of the reduced Khovanov homology. Indeed, we can consider  $\F=X\cdot \alg$ as an $\alg$--module, and correspondingly define the {\em reduced Khovanov complex} \footnote{This is not quite Khovanov's definition, but is isomorphic to it.}
 $$\CKr(D):= \CK(D)\otimes_\alg \F.$$
 
The structure we use is a straightforward generalization of this  construction to links. Consider a diagram $D$ for a link $L\subset S^3$ of $n$ components.  For each component $K_i\subset L$ pick a marked point $p_i\subset K_i$ and obtain a map:
$$ \begin{CD} \alg\otimes CKh(D) @>{\khaction_{p_i}}>>CKh(D).\end{CD} $$

\noindent Now for each $i$, let $x_i(-):=f_{p_i}(X \otimes -)$ denote the chain map induced from this module structure, and $$ X_i\co Kh(L) {\longrightarrow} Kh(L), \ \ i=0,...,n-1,$$
denote the map induced on homology. From the definition we see that $x_i$, and hence $X_i$, commute and satisfy $x_i^2=0.$ Thus we can regard Khovanov homology as a module over $\modzero$.   Showing that  this module structure is a link invariant is not as straightforward as above.  This is due to the fact that we may not be able to connect two multi-pointed link diagrams by a sequence of Reidemeister moves which occur in the complement of all the basepoints.  One may expect, then, that the module structure is an invariant only of the pointed link isotopy class.  In fact the isomorphism type of the  module structure on  $Kh(L;\F)$ does not depend on the choice of marked points or link diagram, as the following proposition shows.

\begin{prop} \label{prop:KhInv}
Let $D$ and $D'$ be diagrams for a link $L$ of $n$ components and let $p_i\in D_i$, $p_i'\in D_i'$ be marked points, one chosen on each component of the diagram.  Then there is a commutative diagram:
$$
\begin{CD}
\displaystyle \modzerofrac\otimes Kh(D) @>\khaction_{\mathbf{p}_*}>>  Kh(D)\\
@VV1\otimes e_*V @VVe_*V\\
\displaystyle \modzerofrac\otimes Kh(D') @>\khaction_{\mathbf{p}'_*}>>  Kh(D'),
\end{CD} 
$$
where $e_*$ is an isomorphism induced by a chain homotopy equivalence.
\end{prop}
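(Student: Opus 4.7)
The plan is to leverage the fact, due to Jacobsson and Bar--Natan, that Khovanov homology is (projectively) functorial under link cobordisms, with the ambiguity disappearing over $\F=\Z/2\Z$. The chain map $x_i$ is, by construction, induced by the link cobordism in $S^3\times[0,1]$ obtained by merging a small unknotted circle into the $i$-th component of $L$ via a single saddle. This cobordism, considered up to boundary-preserving isotopy in $S^3\times[0,1]$, depends only on the isotopy class of $L$ and the choice of component $K_i$, not on the basepoint $p_i$ nor on the diagram $D$. So the abstract statement should follow, provided one formalizes this as a chain-level argument.

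First I would establish a diagram-local statement: for a fixed diagram $D$, if $p_i$ and $p_i'$ are two basepoints on the same component $K_i\subset D$, then the chain maps $\khaction_{p_i}$ and $\khaction_{p_i'}$ are chain homotopic. When $p_i$ and $p_i'$ lie on the same edge of $D$, the two maps agree on the nose, since the TQFT does not see where along an arc the merge happens. To slide $p_i$ past a crossing, one invokes the saddle-isotopy invariance (a local movie move): the cobordisms "attach saddle on the left of the crossing, then do nothing" and "do nothing, then attach saddle on the right" are isotopic rel boundary, hence induce chain-homotopic maps by functoriality over $\F$. Iterating produces the desired homotopy $\khaction_{p_i}\simeq \khaction_{p_i'}$ for any two basepoints on $K_i$.

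Next, for Reidemeister invariance, I would proceed as in Proposition~\ref{prop:red} but with the basepoint-sliding from the previous step built in. A sequence of Reidemeister moves from $D$ to $D'$ produces a chain homotopy equivalence $e\co CKh(D)\to CKh(D')$, corresponding to the product cobordism $L\times[0,1]$ described by the movie. Both $e\circ \khaction_{p_i}$ and $\khaction_{p_i'}\circ e$ are chain maps induced by cobordisms obtained by stacking the saddle and the movie in opposite orders. These stacked cobordisms are isotopic rel boundary in $S^3\times[0,1]$ (one can isotope the saddle away from any strand being moved, using the basepoint-sliding of the previous paragraph to identify the resulting maps), so Bar--Natan functoriality yields a chain homotopy $e\circ \khaction_{p_i}\simeq \khaction_{p_i'}\circ e$. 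Passing to homology gives the commutative square in the statement, now an honest commutative diagram. Commutativity of the actions of distinct $X_i,X_j$ is automatic, since the two saddle cobordisms are disjoint and commute up to isotopy; the relations $X_i^2=0$ are inherited from $X^2=0$ in $\alg$.

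The main obstacle is the second step, specifically the claim that the two stacked cobordisms are isotopic in a way that descends to a chain homotopy at the level of Khovanov complexes. This is the point at which the original argument in the paper (the naive one using only Reidemeister moves supported away from basepoints) breaks down, because one cannot always route Reidemeister moves around several basepoints simultaneously. The fix, which I expect to follow Sarkar's suggestion alluded to in the acknowledgements, is precisely to install the basepoint-sliding lemma as a subordinate tool, so that after applying a Reidemeister move that happens to pass over some $p_i$ one can slide the basepoint back before continuing. Keeping careful track of which movie moves and saddle isotopies are used is the technical heart of the argument.
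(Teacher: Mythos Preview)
Your approach is valid but takes a genuinely different route from the paper.  Both arguments reduce to the key basepoint--sliding statement (Lemma~\ref{lem:xp=xq}): if $p,q$ lie on the same component of $D$ and are separated by a single crossing $\chi_0$, then $x_p\simeq x_q$.  You deduce this from Jacobsson/Bar--Natan functoriality, arguing that the two merge--saddle cobordisms are isotopic rel boundary and hence induce chain--homotopic maps over $\F$.  The paper instead writes down the homotopy explicitly: let $H\co CKh(D)\to CKh(D)$ be the ``wrong--way'' elementary cobordism map from the $1$--resolution to the $0$--resolution at $\chi_0$ (zero on the $0$--resolution summand), and verify $x_p-x_q=\partial H+H\partial$ by a direct Frobenius--algebra computation using the identities $m\Delta=0$ and $\Delta m(a\otimes b)=Xa\otimes b+a\otimes Xb$.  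This is Sarkar's argument alluded to in the acknowledgements.

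The trade--off: your route is conceptually clean and places the result in the general functoriality framework, but it imports the substantial machinery of movie--move invariance.  The paper's route is elementary and entirely self--contained --- a few lines of algebra in $\alg$ --- and produces an explicit homotopy operator.  One point in your write--up deserves care: the map $x_i$ is not itself induced by an undotted cobordism $L\to L$ (birth--then--merge gives the identity, and a handle gives $m\Delta=0$ over $\F$).  What functoriality actually controls is the saddle map $f_{p_i}\co CKh(L\sqcup U)\to CKh(L)$, and $x_i$ is its restriction to the summand $X\otimes CKh(L)$; equivalently one works in the dotted cobordism category where $x_i$ is the identity cobordism with a dot on $K_i$.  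Either formulation is fine, but the proposal should say which one is meant before invoking isotopy invariance.
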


\noindent The proof of Proposition~\ref{prop:KhInv} uses an argument suggested by Sucharit Sarkar.

Any two diagrams $D,D'$ are related by a sequence of Reidemeister moves. If a Reidemeister move occurs in the complement of neighborhoods of all marked points $p_i$, then the commutative diagram obviously exists. Thus we only need to consider the case of isotoping an arc over or under a marked point or, equivalently, of sliding a basepoint past a crossing. This case is contained in the following lemma.

\begin{lem}\label{lem:xp=xq}
Suppose that $D$ is a link diagram, and $\chi_0$ is a crossing of $D$. Suppose that $p,q$ are two marked points on a strand passing $\chi_0$, such that $p,q$ are separated by $\chi_0$. Let $x_p,x_q\co CKh(D)\to CKh(D)$ be the module multiplications induced by the marked points $p,q$. Then $x_p$ and $x_q$ are homotopy equivalent.
\end{lem}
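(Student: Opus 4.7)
The plan is to build an explicit chain homotopy $h$ on $CKh(D)$ with $\partial h + h \partial = x_p + x_q$ (over $\F$, so $-$ and $+$ coincide). First I would split the complex by the resolution at $\chi_0$, writing $CKh(D) = C_0 \oplus C_1$, where $C_\epsilon$ is the subcomplex spanned by complete resolutions with $\chi_0$ resolved as $\epsilon$. In this decomposition,
\[
\partial = \begin{pmatrix} \partial_0 & 0 \\ \delta & \partial_1 \end{pmatrix},
\]
where each $\partial_\epsilon$ is the sum of edge maps coming from crossings other than $\chi_0$ and $\delta \co C_0 \to C_1$ is the Khovanov edge map at $\chi_0$ itself.

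Next I would analyze how $p$ and $q$ sit relative to the two resolutions. Because they lie on a single strand passing through $\chi_0$, at least one resolution reconnects the four corners of $\chi_0$ so as to keep the $p$-arc and the $q$-arc in a common global circle. If both resolutions do this, then $x_p = x_q$ identically and $h=0$ suffices; so assume without loss of generality that exactly the $1$-resolution has this property. Then in every complete resolution contributing to $C_1$ the points $p,q$ lie on one circle $C_{pq}$, whereas in every complete resolution contributing to $C_0$ they lie on two distinct circles $C_p, C_q$, and $\delta$ acts as the Frobenius merge $m \co \alg \otimes \alg \to \alg$ on those two factors (tensored with the identity on all other circles).

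In this setting, define $h \co C_1 \to C_0$ to be the Frobenius coproduct $\Delta \co \alg \to \alg \otimes \alg$ applied on the $C_{pq}$-factor, extended by the identity on all other circles. Unpacking $\partial h + h \partial$ in block form: the $C_0$-diagonal gives $h \delta = \Delta \circ m$, which computes to $(X \otimes 1) + (1 \otimes X)$ on $\alg \otimes \alg$, exactly $x_p^0 + x_q^0$; the $C_1$-diagonal gives $\delta h = m \circ \Delta$, which sends $1 \mapsto 2X \equiv 0$ and $X \mapsto X^2 = 0$ over $\F$, matching $x_p^1 + x_q^1 = 0$. The remaining off-diagonal block reduces to the chain-map condition $\partial_0 h + h \partial_1 = 0$.

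The main obstacle is this last condition: that $h$ commutes (mod $2$) with the internal differentials $\partial_0, \partial_1$. For each other crossing $\chi'$ the corresponding summand compares ``saddle at $\chi'$ followed by reverse saddle at $\chi_0$'' with ``reverse saddle at $\chi_0$ followed by saddle at $\chi'$''; the two saddles are supported on disjoint discs of the plane, so the two compositions are realized by diffeomorphic surface cobordisms and the Frobenius TQFT therefore assigns them equal maps. Over $\F$, with all Khovanov signs erased, this is the same $2$-face commutativity of the cube of resolutions that underlies $\partial^2 = 0$, now applied to the formally reversed edge at $\chi_0$. The symmetric case in which the $0$-resolution instead joins $p$ and $q$ (so $\delta = \Delta$) is handled analogously with $h = m$. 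This produces the desired chain homotopy, proving the lemma.
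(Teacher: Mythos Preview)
Your overall strategy matches the paper's: split $CKh(D)$ along the $\chi_0$-resolution and define the homotopy $h\co C_1\to C_0$ to be the reverse saddle at $\chi_0$. The off-diagonal verification via far-commutativity of disjoint saddles (equivalently, $\partial^2=0$ in the cube for the crossing-changed diagram $\overline D$) is also the paper's argument.

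The gap is in your second paragraph. You claim that the resolution type at $\chi_0$ alone determines whether $p$ and $q$ lie on a common circle, so that (after a WLOG) in \emph{every} complete resolution with $\chi_0$ set to $1$ the points share a circle $C_{pq}$, while in every complete resolution with $\chi_0$ set to $0$ they lie on distinct circles $C_p,C_q$. This is false. Locally, $p$ and $q$ sit on diagonally opposite arcs at $\chi_0$, and \emph{neither} resolution connects those two arcs directly; whether $p$ and $q$ end up on the same global circle depends on how the remaining crossings are resolved. In particular, within $C_0$ some complete resolutions have $p,q$ together and others have them apart, and likewise within $C_1$. Consequently your definition of $h$ as ``$\Delta$ on the $C_{pq}$-factor'' is not well-posed, and the assertion that $\delta$ is always the merge $m$ is wrong.

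The fix is exactly what the paper does: define $h$ on each summand $C(I_1)$ to be the map induced by the elementary cobordism from $I_1$ to $I_0$, without presupposing whether it is $m$ or $\Delta$. Then on each pair $(I_0,I_1)$ the diagonal blocks become either $\Delta m$ or $m\Delta$, and the two Frobenius identities
\[
m\Delta=0,\qquad \Delta m(a\otimes b)=Xa\otimes b+a\otimes Xb
\]
give precisely $(x_p+x_q)|_{C(I_0)}$ and $(x_p+x_q)|_{C(I_1)}$ in both cases (when $p,q$ share a circle, $x_p+x_q=0$ there, matching $m\Delta=0$). Your off-diagonal argument then goes through unchanged. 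So the architecture of your proof is right; only the identification of $h$ with a fixed $\Delta$ (and of $\delta$ with a fixed $m$) needs to be replaced by the resolution-dependent reverse-saddle map.
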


Consider the cube of resolutions of $D$. If $IJ$ is an oriented edge of the cube, let $\partial_{IJ}\co C(I)\to C(J)$ be the map induced by the elementary cobordism corresponding to the edge.
Let $I_0,I_1$ be any two complete resolutions of $D$ that differ only at the crossing $\chi_0$, where $I_0$ is locally the $0$--resolution and $I_1$ is locally the $1$--resolution. Suppose that the immediate successors of $I_0$ besides $I_1$ are $J_0^1,J_0^2,\dots,J_0^r$. Then the immediate successors of $I_1$ are $J_1^1,J_1^2,\dots,J_1^r$, where  $J_0^i$ and $J_1^i$ differ only at the crossing $\chi_0$.

We write $CKh(D)$ as the direct sum of two subgroups
$$CKh(D)=CKh(D_0)\oplus CKh(D_1).$$
Here $D_j$ is the $j$--resolution of $D$ at $\chi_0$ for $j=0,1$. Then $CKh(D_1)$ is a subcomplex of $CKh(D)$, and $CKh(D_0)$ is a quotient complex of $CKh(D)$.

We define $H\co CKh(D)\to CKh(D)$ as follows. On $CKh(D_0)$, $H=0$; on $CKh(D_1)$, $H\co CKh(D_1)\to CKh(D_0)$ is defined  by the map associated to the elementary cobordism from $D_1$ to $D_0$.
We claim that
\begin{equation}\label{eq:Homotopy}
x_p-x_q=\partial H+H\partial.
\end{equation}
The claim obviously implies Lemma \ref{lem:xp=xq}, and hence Proposition \ref{prop:KhInv}.   To prove the claim, we begin with another lemma.

\begin{lem}\label{lem:X-X}Let $I_0,I_1$ be as above. Then for any $\alpha\in C(I_0)$ and $\beta\in C(I_1)$, the following identities hold:
\begin{eqnarray*}
(x_p-x_q)(\alpha)&=&H\partial_{I_0I_1}(\alpha),\\
(x_p-x_q)(\beta)&=&\partial_{I_0I_1}H(\beta).
\end{eqnarray*}
\end{lem}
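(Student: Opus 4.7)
The plan is to reduce both identities to a single elementary calculation in the Frobenius algebra $\alg = \F[X]/(X^2)$ after pinning down the local picture near $\chi_0$. The key topological input is that $p$ and $q$ lie on the same strand, separated by $\chi_0$; tracking how the four arc-ends at the crossing are paired by the two resolutions shows the following dichotomy: in exactly one of $I_0$ and $I_1$ the points $p$ and $q$ lie on a common circle, while in the other they lie on two distinct circles. I would split the argument into these two cases, which turn out to be mirror images of one another.

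Consider first the case where $p$ and $q$ share a circle $\gamma$ in $I_0$ and are separated into circles $\gamma'_p, \gamma'_q$ in $I_1$. Then $\partial_{I_0I_1}$ is the coproduct $\Delta$ applied to $\gamma$, while $H|_{C(I_1)}$ is the multiplication $m$ of $\gamma'_p$ and $\gamma'_q$. The first identity is immediate: $(x_p-x_q)|_{C(I_0)}=0$ because $x_p$ and $x_q$ both act by multiplication by $X$ on the single circle $\gamma$, and $H\partial_{I_0I_1}=m\circ\Delta$ vanishes on $\alg$ over $\F$, since $m\Delta(\mathbf 1)=2X=0$ and $m\Delta(X)=X^2=0$. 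For the second identity in this case, both $(x_p-x_q)|_{C(I_1)}$ and $\partial_{I_0I_1}H=\Delta\circ m$ produce the same element of $\alg\otimes\alg$ on the $\gamma'_p, \gamma'_q$ factor, which is a short check on the four generators $\mathbf 1\otimes\mathbf 1$, $X\otimes\mathbf 1$, $\mathbf 1\otimes X$, $X\otimes X$: in each case one gets $X\otimes\mathbf 1+\mathbf 1\otimes X$, $X\otimes X$, $X\otimes X$, or $0$ on both sides.

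The opposite case, where $p$ and $q$ are on distinct circles in $I_0$ and on a common circle in $I_1$, is symmetric: the roles of $m$ and $\Delta$ swap, so now it is the second identity that is the trivial $0=m\Delta$ calculation and the first identity that records the same Frobenius-algebra check $(x_p-x_q)=\Delta\circ m$ on the two circles through $p$ and $q$ in $I_0$. The only real obstacle is bookkeeping: one must confirm that the elementary saddle $\partial_{I_0I_1}$ and its reverse $H$ really do act as multiplication or comultiplication on precisely the pair of circles containing $p$ and $q$, while fixing every other circle in the resolutions untouched; this is where the hypothesis that $p$ and $q$ are on the strand through $\chi_0$ is essential. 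Once this local model is in place, the lemma reduces to the handful of $\alg$-level identities above, and identity~\eqref{eq:Homotopy} — hence Lemma~\ref{lem:xp=xq} — follows by summing over all pairs of resolutions $(I_0,I_1)$ that differ at $\chi_0$.
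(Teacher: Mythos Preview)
Your proof is correct and is essentially the same as the paper's, just more explicit. The paper compresses both of your cases into the two Frobenius-algebra identities $m\Delta=0$ and $\Delta m(a\otimes b)=Xa\otimes b+a\otimes Xb$ and says the lemma follows immediately; your case split simply unpacks which of these identities is being invoked for each of the four equalities, together with the (correct) topological observation that $p$ and $q$ lie on the same circle in exactly one of $I_0,I_1$ and on distinct circles in the other.
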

\begin{proof}
It is easy to check the following identities in $\mathcal A$:
\begin{equation*}
m\Delta=0,\quad \Delta m(a\otimes b)=Xa\otimes b+a\otimes Xb.
\end{equation*}
Our conclusion then immediately follows.
\end{proof}

By the definition of $H$, we have $H(\alpha)=0$. Moreover, 
\begin{eqnarray*}
H\partial(\alpha)&=&H\big(\partial_{I_0I_1}\alpha+\sum_{i=1}^r\partial_{I_0J_0^i}\alpha\big)\\
&=&H\partial_{I_0I_1}(\alpha),
\end{eqnarray*}
which is equal to $(x_p-x_q)(\alpha)$ by Lemma~\ref{lem:X-X}.  Thus \eqref{eq:Homotopy} holds for $\alpha\in C(I_0)$.

Turning to $\beta\in C(I_1)$, we have\begin{equation}
\partial H(\beta)=\partial_{I_0I_1}H(\beta)+\sum_{i=1}^r\partial_{I_0J_0^i}H(\beta),\label{eq:dH}
\end{equation}
and 
\begin{equation}\label{eq:Hd}
H\partial (\beta)=H\big(\sum_{i=1}^r\partial_{I_1J_1^i}(\beta)\big).
\end{equation}

Let $\overline D$ be the diagram (of a possibly different link) which is obtained from $D$ by changing the crossing $\chi_0$. Then $H$ is the summand of the differential in $CKh(\overline D)$ which is induced by the edges parallel to $I_1I_0$.
It is clear that 
$$
\sum_{i=1}^r\partial_{I_0J_0^i}H(\beta)+H\big(\sum_{i=1}^r\partial_{I_1J_1^i}(\beta)\big)
$$
is equal to $\partial^2\beta$ in $CKh(\overline D)$, which is zero.
So from (\ref{eq:dH}) (\ref{eq:Hd}) and  Lemma~\ref{lem:X-X} we have 
$$(\partial H+H\partial)(\beta)=\partial_{I_0I_1}H(\beta)=(x_p-x_q)(\beta).$$
This finishes the proof of (\ref{eq:Homotopy}), hence Lemma~\ref{lem:xp=xq} follows.

\begin{rem}  Note that we make no claims about the naturality of the module structure. The only result we need is that isotopic links have isomorphic Khovanov modules.  Indeed, it seems likely that the module structure on Khovanov homology is functorial, but only in the category of pointed links; that is, links with a choice of basepoint on each component.
\end{rem}

When considering the relationship with Heegaard Floer homology, it will be more natural to work in the context of reduced Khovanov homology.  Note that reduced Khovanov homology can be interpreted as the homology of the kernel complex $H_*(\mathrm{ker}\{ CKh(L)\overset{x_i}\rightarrow CKh(L)\})$ or, equivalently, as the kernel of the map on homology, $\mathrm{ker}{X_i}$ \cite{Shumakovitch2004}. In particular, the module structure from the proposition descends to an $\modred$--module structure on the reduced Khovanov homology.   Henceforth, we will let $p_0$ be the point chosen to define reduced Khovanov homology, so that $\Khr(L)$ is a module over $\modred$. 

Although the reduced Khovanov homology groups do not depend on the choice of  component $L_0$ containing $p_0$, their module structure will, in general, depend on this choice. Thus we  use $\Khr(L,L_0)$ to emphasize the dependence of the module structure on the component $L_0$ and abuse the notation $\Khr(L)$ when $L_0$ is understood. 

As we shall see, this module structure on the reduced Khovanov homology is  connected to a module structure on the Heegaard Floer homology of the branched double cover through a refinement of the \os \ spectral sequence.

We also note that, since $X_i^2=0$, the module structure equips the Khovanov homology (or reduced Khovanov homology) with the structure of a chain complex.  Thus it makes sense to talk about the {\em homology of Khovanov homology with respect to $X_i$}, which we frequently denote  $H_*(Kh(L),X_i)$.  It follows from \cite{Shumakovitch2004} that $H_*(Kh(L),X_i)=0$ for any $i$.  It is far from true, however,  that $H_*(\Khr(L),X_i)=0$ for each $i$, in general.  

\subsection{Example: The unlink versus the Hopf link}\label{subsec:hopf}
It is simple yet instructive to consider the distinction between the Khovanov module of the two component unlink and the Hopf link.  The former is represented by Khovanov chain complex $CKh(\text{Unlink})=\alg\langle X_0\rangle \otimes_\F\alg\langle X_1\rangle$ with $\partial\equiv 0$.  The homology is thus isomorphic to $\F[X_0,X_1]/(X_0^2,X_1^2)$ as a module, and is supported in a single homological grading.  The reduced Khovanov homology is  $\Khr(\mathrm{Unlink})\cong\F[X]/X^2$.

\begin{figure}
\begin{center}
\begin{picture}(175,95)
\put(0,65){\vector(1,0){70}} \put(65,52){$i$}

\put(35,30){\vector(0,1){70}} \put(39,95){$j$}

\put(15,65){\line(0,1){2}}

\put(25,65){\line(0,1){2}}

\put(45,65){\line(0,1){2}}

\put(55,65){\line(0,1){2}}

\put(35,45){\line(1,0){2}}

\put(35,55){\line(1,0){2}}

\put(35,75){\line(1,0){2}}

\put(35,85){\line(1,0){2}}

\put(33,65){\circle*{4}}
\put(37,65){\circle*{4}}

\put(35,45){\circle*{4}}

\put(35,85){\circle*{4}}

\put(100,65){\vector(1,0){70}} \put(165,52){$i$}

\put(140,0){\vector(0,1){90}} \put(144,85){$j$}

\put(130,65){\line(0,1){2}}

\put(150,65){\line(0,1){2}}

\put(160,65){\line(0,1){2}}

\put(120,65){\line(0,1){2}}

\put(140,45){\line(1,0){2}}

\put(140,55){\line(1,0){2}}

\put(140,75){\line(1,0){2}}

\put(140,35){\line(1,0){2}}

\put(140,25){\line(1,0){2}}

\put(140,15){\line(1,0){2}}

\put(140,5){\line(1,0){2}}

\put(140,65){\circle*{4}}

\put(140,45){\circle*{4}}

\put(120,25){\circle*{4}}

\put(120,5){\circle*{4}}

\end{picture}

\caption{\label{fig:UnlinkHopf}  Khovanov homology of the two-component unlink and the Hopf link, where a black dot stands for a copy of $\mathbb Z$, the $i$--coordinate is the homological grading and the $j$--coordinate is the quantum grading.}
\end{center}
\end{figure}
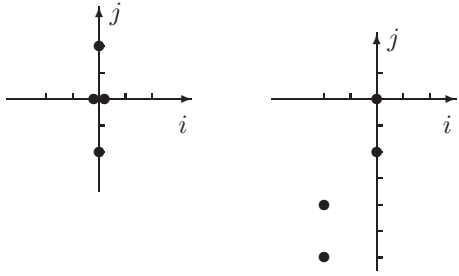

\begin{figure}
\begin{center}
\begin{picture}(265,95)

\put(13,65){\circle*{4}}
\put(17,65){\circle*{4}}

\put(15,45){\circle*{4}}

\put(15,85){\circle*{4}}

\put(13.5,83){\vector(0,-1){16}}
\put(16.5,63){\vector(0,-1){16}}

\put(-5,-7){$X_0$ action}

\put(73,65){\circle*{4}}
\put(77,65){\circle*{4}}

\put(75,45){\circle*{4}}

\put(75,85){\circle*{4}}

\put(76.5,83){\vector(0,-1){16}}
\put(73.5,63){\vector(0,-1){16}}

\put(55,-7){$X_1$ action}

\put(180,65){\circle*{4}}

\put(180,45){\circle*{4}}

\put(160,25){\circle*{4}}

\put(160,5){\circle*{4}}

\put(158.5,23){\vector(0,-1){16}}
\put(178.5,63){\vector(0,-1){16}}

\put(150,-7){$X_0$ action}

\put(240,25){\circle*{4}}

\put(240,5){\circle*{4}}

\put(260,65){\circle*{4}}

\put(260,45){\circle*{4}}

\put(241.5,23){\vector(0,-1){16}}
\put(261.5,63){\vector(0,-1){16}}

\put(230,-7){$X_1$ action}

\end{picture}

\caption{\label{fig:KhMod} Module structure on the Khovanov homology of the two-component unlink and the Hopf link. The $X_0$ and $X_1$ actions are different for the two-component unlink, but are equal for the Hopf link.}
\end{center}
\end{figure}
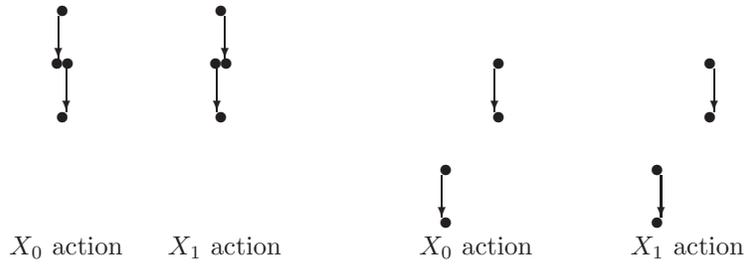

The Hopf link, on the other hand, has Khovanov homology  $$Kh(\mathrm{Hopf})\cong \F\langle a\rangle \oplus\F\langle b\rangle \oplus\F\langle c\rangle \oplus \F\langle d\rangle,$$ with the relative homological gradings of $c$ and $d$  equal, and two greater than those of $a$ and $b$.  The relative quantum grading of $a$ is $2$ lower than that of $b$ which is two lower than $c$ which, in turn, is two lower than $d$.  The module structure is given by 
$$  X_0(b)=X_1(b)=a\ \ \ \ \ X_0(d)=X_1(d)=c. $$
More succinctly, the Khovanov module of the Hopf link is isomorphic to $$\frac{\F[X]}{X^2}\oplus \frac{\F[X]\{2,4\}}{X^2},$$ where we use notation $\{2,4\}$ to denote a shift of $2$ and $4$ in the homological and quantum gradings respectively.  In this module each $X_i$ acts as $X$. 
It follows that the module structure on reduced Khovanov homology is trivial; that is $$\Khr(\mathrm{Hopf})\cong \F\langle a\rangle \oplus \F\langle c\rangle,$$ where $X\in \F[X]/X^2$ acts as zero on both summands.  Thus, the homology of the reduced Khovanov homology with respect to $X$ is $$H_*(\Khr(\mathrm{Hopf}),X)\cong \F\langle a\rangle \oplus \F\langle c\rangle,$$ while for the unlink it is trivial
$$ H_*(\Khr(\mathrm{Unlink}),X)=0.$$

\section{Preliminaries on Heegaard Floer homology}

In this section, we recall the basic theory of Heegaard
Floer homology, with emphasis on the action of
$\Lambda^*(H_1(Y;\mathbb Z)/\mathrm{Tors})$ and twisted
coefficients.  For a  detailed account of the theory, we refer the reader to \cite{OSzAnn1} (see also \cite{OSzSurvey,FloerClay2,OSzICM} for  gentler introductions).

Suppose $Y$ is a closed oriented $3$--manifold, together with a $\SpinC$ structure $\mathfrak
s\in\mathrm{Spin}^c(Y)$. Let
$$(\Sigma,\mbox{\boldmath${\alpha}$},
\mbox{\boldmath$\beta$},z)$$ be an admissible pointed Heegaard diagram for
$(Y,\mathfrak s)$, in the sense of \cite{OSzAnn1}.  To such a diagram we can associate the \os \  infinity chain complex, ${CF}^{\infty}(Y,\mathfrak s)$.  This chain complex is freely generated over the ring $\F[U,U^{-1}]$ by intersection points $\x\in \Ta\cap \Tb$, where $\Ta$ (resp. $\Tb$) is the Lagrangian torus in Sym$^g(\Sigma)$, the $g$--fold symmetric product of $\Sigma$.  The boundary operator is defined by
$$\partial^\infty \x = \sum_{\mathbf y\in\mathbb T_{\alpha}\cap\mathbb T_{\beta}}
\sum_{\{\phi\in\pi_2(\mathbf x,\mathbf
y)|\mu(\phi)=1\}}\#\widehat{\mathcal M}(\phi)\cm U^{n_z(\phi)}\y,$$
where $\pi_2(\x,\y)$ is the set of homotopy classes of Whitney disks connecting $\x$ to $\y$, $\mu$ is the Maslov index, $\#\widehat{\mathcal M}(\phi)$ denotes the  reduction modulo two of the number of unparametrized pseudo-holomorphic maps in the homotopy class $\phi$, and $n_z(\phi)$ denotes the algebraic intersection number of such a map with the holomorphic hypersurface in Sym$^g(\Sigma)$ consisting of unordered $g$--tuples of points in $\Sigma$ at least one of which is the basepoint $z$.

\subsection{The $\Lambda^*(H_1(Y;\mathbb Z)/\mathrm{Tors})$ action} \label{subsec:H_1}

In this subsection we describe an action of $\Lambda^*(H_1(Y;\mathbb Z)/\mathrm{Tors})$ on the Floer homology of a $3$--manifold $Y$. Let $\zeta\subset \Sigma$ be a closed oriented (possibly
disconnected) immersed curve which is in general position with the
$\alpha$-- and $\beta$--curves. Namely, $\zeta$ is transverse to
these curves, and $\zeta$ does not contain any intersection point
of $\alpha$-- and $\beta$--curves.  Note that any closed curve $\zeta_0\subset Y$ can be homotoped in $Y$ to be an immersed curve in $\Sigma$.  If $\phi$ is a topological
Whitney disk connecting $\mathbf x$ to $\mathbf y$, let
$\partial_{\alpha}\phi=(\partial\phi)\cap\mathbb T_{\alpha}$. We
can also regard $\partial_{\alpha}\phi$ as a multi-arc that lies
on $\Sigma$ and connects $\mathbf x$ to $\mathbf y$. Similarly, we
define $\partial_{\beta}\phi$ as a multi-arc connecting $\mathbf
y$ to $\mathbf x$. 

Let
$$\curve^{\zeta}\co {CF}^{\infty}(Y,\mathfrak s)\to {CF}^{\infty}(Y,\mathfrak s)$$ be
defined on generators as

$$\curve^{\zeta}(\mathbf x)=\sum_{\mathbf y\in\mathbb T_{\alpha}\cap\mathbb T_{\beta}}
\sum_{\{\phi\in\pi_2(\mathbf x,\mathbf
y)|\mu(\phi)=1\}}\big(\zeta\cdot(\partial_{\alpha}\phi)\big)\:\#\widehat{\mathcal M}(\phi)\cm U^{n_z(\phi)} \y,$$
where $\zeta\cdot(\partial_{\alpha}\phi)$ is the algebraic
intersection number of $\zeta$ and $\partial_{\alpha}\phi$.  We extend the map to the entire complex by requiring linearity and $U$--equivariance.  As shown in Ozsv\'ath--Szab\'o \cite[Lemma~4.18]{OSzAnn1}, Gromov compactness implies that 
$\curve^{\zeta}$ is a chain map.  Moreover, this chain map clearly respects  the sub-, quotient, and subquotient complexes $\CFm,\CFp,\CFa$, respectively.
Thus $\curve^\zeta$  induces a map, denoted $A^\zeta=(a^\zeta)_*$, on all versions of Heegaard Floer homology.

The following lemma shows that $A^\zeta$ only depends on the homology class $[\zeta]\in
H_1(Y;\mathbb Z)/\mathrm{Tors}$. 

\begin{lem}\label{homologyaction}
Suppose $\zeta_1,\zeta_2\subset \Sigma$ are two curves which are
homologous in $H_1(Y;\mathbb Z)/\mathrm{Tors}$, then $\curve^{\zeta_1}$
is chain homotopic to $\curve^{\zeta_2}$.
\end{lem}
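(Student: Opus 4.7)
The plan is to prove the stronger statement that $\curve^\zeta$ is chain homotopic to zero whenever $\zeta$ is null-homologous in $H_1(Y;\Z)/\Tors$; applied to $\zeta=\zeta_1-\zeta_2$ together with $\F$-linearity of $\curve^{\bullet}$ in its superscript, this yields the lemma. Using the Heegaard computation
\[
H_1(Y;\Z)\cong H_1(\Sigma;\Z)/\langle[\alpha_1],\ldots,[\alpha_g],[\beta_1],\ldots,[\beta_g]\rangle,
\]
such a $\zeta$ can, after passing to a suitable integer multiple and absorbing the torsion via $\F$-linearity, be written on $\Sigma$ as
\[
\zeta=\partial P+\sum_i c_i\alpha_i+\sum_j d_j\beta_j
\]
for some 2-chain $P$. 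So it suffices to treat three cases: (a) $\zeta$ is represented by a parallel copy of some $\alpha_i$; (b) $\zeta$ is represented by a parallel copy of some $\beta_j$; (c) $\zeta=\partial P$ is a 1-boundary on $\Sigma$.

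For case (a), I would take $\zeta$ to be a parallel copy $\alpha_i'$ lying in an annular neighborhood of $\alpha_i$ disjoint from all $\alpha$-curves and in general position with the $\beta$-curves. Since $\partial_\alpha\phi$ is contained in $\bigcup_k\alpha_k$ and is therefore disjoint from $\alpha_i'$, one has $\alpha_i'\cdot\partial_\alpha\phi=0$ identically, so $\curve^{\alpha_i'}\equiv 0$. Case (b) is symmetric, once one observes that for any closed curve $\zeta$ on $\Sigma$,
\[
\zeta\cdot\partial_\alpha\phi+\zeta\cdot\partial_\beta\phi=\zeta\cdot\partial\Dom(\phi)=0
\]
over $\F$ (because $\partial\Dom(\phi)$ is a 1-boundary on $\Sigma$); hence $\curve^\zeta$ may equivalently be computed using $\partial_\beta\phi$, and a parallel copy of $\beta_j$ disjoint from all $\beta$-curves gives the vanishing.

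For case (c), the plan is to construct an explicit chain homotopy $h^P$ by counting $\mu(\phi)=0$ Whitney disks weighted by the intersection number of their domain with $P$:
\[
h^P(\x)=\sum_\y\sum_{\{\phi\in\pi_2(\x,\y)\mid\mu(\phi)=0\}}(P\cdot\Dom(\phi))\,\#\widehat{\mathcal M}(\phi)\cdot U^{n_z(\phi)}\,\y.
\]
Applying Gromov compactness to the one-dimensional moduli spaces of $\mu(\phi)=1$ disks, the codimension-one ends are parametrized by broken disks $\phi=\phi_1*\phi_2$. The key local identity, derived from the additivity $\Dom(\phi_1*\phi_2)=\Dom(\phi_1)+\Dom(\phi_2)$ and the interpretation of $\partial P\cdot\partial_\alpha\phi_i$ as the jump in $P\cdot\Dom$ across the degeneration, gives
\[
\partial h^P+h^P\partial=\curve^{\partial P}
\]
on the chain level.

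The main obstacle will be case (c): carefully establishing the local intersection identity at the codimension-one ends with correct sign/parity conventions, and verifying that admissibility of the Heegaard diagram makes all the sums defining $h^P$ finite. Cases (a) and (b) are essentially combinatorial once one allows a convenient choice of representative and uses the $\partial_\alpha/\partial_\beta$ symmetry for closed curves.
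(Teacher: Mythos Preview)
Your cases (a) and (b) are fine and match the paper's observation that parallel copies $\alpha_i'$, $\beta_j'$ have zero intersection with $\partial_\alpha\phi$. But there are two genuine gaps.

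First, the torsion step. The decomposition $\zeta=\partial P+\sum c_i\alpha_i+\sum d_j\beta_j$ on $\Sigma$ is only available when $[\zeta]=0$ in $H_1(Y;\Z)$, not merely modulo torsion. If $[\zeta]$ has even order (say $2^k$) in $H_1(Y;\Z)$, then every integer $m$ with $m[\zeta]=0$ is even, and $\curve^{m\zeta}=m\,\curve^\zeta=0$ over $\F$ tells you nothing about $\curve^\zeta$; so ``absorbing the torsion via $\F$-linearity'' fails precisely here. The paper circumvents this by keeping the integer $m$: for the $2$-chain $B'$ with $\partial B'=m\zeta_2-m\zeta_1+\sum(a_i\alpha_i'+b_i\beta_i')$, it proves that $n_{\x}(B')$ is divisible by $m$ for every generator $\x$ in the given $\SpinC$ structure (after adding copies of $\Sigma$ to $B'$). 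The quotient $n_{\x}(B')/m$ is then an honest integer, and one sets $H(\x)=\dfrac{n_{\x}(B')}{m}\,\x$.

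Second, your homotopy $h^P$ in case (c) is not workable as written. The pairing $P\cdot\Dom(\phi)$ between two $2$-chains on a surface has no evident meaning; for generic almost complex structures the only index-zero holomorphic disks are the constants, whose domain is zero, forcing $h^P\equiv 0$ under any reasonable interpretation; and ``one-dimensional moduli spaces of $\mu(\phi)=1$ disks'' is a dimension slip (those are the zero-dimensional $\widehat{\mathcal M}$). The correct homotopy involves no holomorphic curve counting at all: it is the diagonal operator $H(\x)=n_{\x}(P)\,\x$, and the elementary identity $n_{\x}(P)-n_{\y}(P)=\pm\,\partial P\cdot\partial_\alpha\phi$ (the multiplicity of $P$ jumps each time the arc $\partial_\alpha\phi$ crosses $\partial P$) gives $\partial H+H\partial=\curve^{\partial P}$ immediately. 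This is exactly what the paper does---in a single stroke rather than three cases, since the $\alpha_i'$, $\beta_j'$ terms in $\partial B'$ contribute zero to $\partial B'\cdot\partial_\alpha\phi$ anyway.
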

\begin{proof}
Since $\zeta_1$ and $\zeta_2$ are homologous in $H_1(Y;\mathbb Z
)/\mathrm{Tors}$, there exists a nonzero integer $m$ such that
$m[\zeta_1]=m[\zeta_2]\in H_1(Y;\mathbb Z)$. Using the fact that
$$H_1(Y)\cong H_1(\Sigma)/([\alpha_1]\dots,[\alpha_g],[\beta_1],\dots,[\beta_g]),$$
we conclude that there is a $2$--chain $B$ in $\Sigma$, such that
$\partial B$ consists of $m\zeta_2$, $m(-\zeta_1)$ and copies of
$\alpha$--curves and $\beta$--curves. Perturbing $B$ slightly, we
get a $2$--chain $B'$ such that
$$\partial B'=m\zeta_2-m\zeta_1+\sum (a_i\alpha'_i+b_i\beta'_i),$$
where $\alpha_i',\beta_i'$ are parallel copies of $\alpha_i,
\beta_i$.

Let $\phi$ be a topological Whitney disk connecting $\mathbf x$ to
$\mathbf y$. Since $\alpha_i'$ is disjoint from all
$\alpha$--curves, we have $\alpha_i'\cdot\partial_{\alpha}\phi=0$.
Similarly,
$$\beta_i'\cdot\partial_{\alpha}\phi=-\beta_i'\cdot\partial_{\beta}\phi=0.$$
We have \begin{equation}\label{eq:Diffn} n_{\mathbf
x}(B')-n_{\mathbf y}(B')=-\partial
B'\cdot\partial_{\alpha}\phi=m(\zeta_1-\zeta_2)\cdot\partial_{\alpha}\phi\in
m\mathbb Z.
\end{equation}

Pick an intersection point $\mathbf x_0$ representing the Spin$^c$
structure $\mathfrak s$. After adding copies of $\Sigma$ to $B'$,
we can assume that $n_{\mathbf x_0}(B')$ is divisible by $m$.  Since any two intersection points representing $\mathfrak s$ are
connected by a topological Whitney disk, (\ref{eq:Diffn}) implies
that $n_{\mathbf x}(B')$ is divisible by $m$ for any $\mathbf x$
representing $\mathfrak s$.

Now we define a map $H\co CF^{\infty}(Y,\mathfrak s)\to
CF^{\infty}(Y,\mathfrak s)$ by letting
$$H(\mathbf x)=\frac{n_{\mathbf x}(B')}{m}\cm\mathbf x$$
on generators, and extending linearly.  It follows from (\ref{eq:Diffn}) that
$$\curve^{\zeta_1}-\curve^{\zeta_2}=\partial\circ H-H\circ\partial.$$
Namely, $\curve^{\zeta_1},\curve^{\zeta_2}$ are chain homotopic.
\end{proof}

In light of the lemma, we will often denote the induced map  on homology by 
$\Curve^{[\zeta]}$, where $[\zeta]\in H_1(Y;\mathbb Z)/\mathrm{Tors}$.   Lemma 4.17 of \cite{OSzAnn1} shows that $\Curve^{[\zeta]}$ satisfies  $\Curve^{[\zeta]}\circ \Curve^{[\zeta]}=0$, hence varying the class within $H_1(Y;\Z)/\mathrm{Tor}$ endows the Floer homology groups with the structure of a
$\Lambda^*(H_1(Y;\mathbb Z)/\mathrm{Tors})$ module.   From its definition, the action satisfies $\Curve^{2[\zeta]}=\Curve^{[\zeta]}+\Curve^{[\zeta]}$.  In light of the fact that we  work with $\F=\Z/2\Z$ coefficients, it will thus make more sense to regard the action as by  $\Lambda^*(H_1(Y;\F))$, where it is understood that classes in $H_1(Y;\F)\cong H_1(Y;\Z)\otimes \F$  arising from even torsion in $H_1(Y;\Z)$ act as zero.

An important example is $Y=\#^n(S^1\times S^2)$.  The module structure of $\widehat{HF}(Y)$ has been computed by \ons\ \cite{OSzAnn2,OSzKnot}:
As a module over $\Lambda^*(H_1(Y;\F))$, $$\widehat{HF}(Y)\cong \Lambda^*(H^1(Y;\F)),$$
where the action of $[\zeta]$ is given by the contraction operator $\iota_{[\zeta]}$ defined using the  natural hom pairing.
We remark that the ring $\Lambda^*(H_1(Y;\F))$ is isomorphic to $\F[X_0,\dots,X_{n-1}]/(X_0^2,\dots,X_{n-1}^2)$, and the module $\Lambda^*(H^1(Y;\F))$ is isomorphic to the free module $\F[X_0,\dots,X_{n-1}]/(X_0^2,\dots,X_{n-1}^2)$.

As with the module structure on Khovanov homology, we can consider the homology of the Heegaard Floer homology with respect to $\Curve^{[\zeta]}$:
$$H_*( \HFc(Y,\spinc), \Curve^{[\zeta]}).$$ 
For $Y=\#^n(S^1\times S^2)$, we have $H_*(\HFa(Y), X_i)=0$ for all $i$.

We conclude this subsection by analyzing  the
$H_1(Y;\F)$ action in the presence of essential spheres.  We begin with the case of separating spheres.  In this case the action splits according to a K{\"u}nneth principle.  More precisely, let $$(\Sigma_i,\mbox{\boldmath${\alpha}$}_i,
\mbox{\boldmath$\beta$}_i,z_i)$$ be a Heegaard diagram for $Y_i$,
$i=1,2$. Let $\Sigma$ be the connected sum of $\Sigma_1$ and
$\Sigma_2$, with the connected sum performed at $z_1$ and $z_2$.
Let $\zeta_i\subset\Sigma_i\backslash\{z_i\}$ be a closed curve.
Suppose $\mathfrak s_1\in\mathrm{Spin}^c(Y_1), \mathfrak
s_2\in\mathrm{Spin}^c(Y_2)$. Now
$$(\Sigma,\mbox{\boldmath${\alpha}$}_1\cup\mbox{\boldmath${\alpha}$}_2,
\mbox{\boldmath$\beta$}_1\cup\mbox{\boldmath${\beta}$}_2,z_1=z_2)$$
is a Heegaard diagram for $Y_1\#Y_2$. Using the proof of the
K\"unneth formula for $\widehat{HF}$ of connected sums
\cite[Proposition~6.1]{OSzAnn2}, one sees that the action of
${\zeta_1\cup\zeta_2}$ on
$$\widehat{CF}(Y_1\#Y_2,\mathfrak s_1\#\mathfrak s_2)\cong
\widehat{CF}(Y_1,\mathfrak s_1)\otimes\widehat{CF}(Y_2,\mathfrak
s_2)$$ is given by
\begin{equation}\label{eq:ConnSum}
\curve^{\zeta_1\cup\zeta_2}=\curve^{\zeta_1}\otimes
\mathrm{id}+\mathrm{id}\otimes \curve^{\zeta_2}.
\end{equation}

Next, we turn to the case of non-separating spheres.  Here, we have a vanishing theorem for the homology of the Floer homology with respect to the action.  First we state a version of the K{\"u}nneth theorem in this context.

\begin{prop}\label{prop:OneHandle}
Let $\mathfrak s$ be a Spin$^c$ structure on a closed oriented
$3$--manifold $Y$. Let $\mathfrak s_0$ be the Spin$^c$ structure
on $S^1\times S^2$ with $c_1(\mathfrak s_0)=0$. Suppose $\zeta_1$
is a closed curve in $Y$, and $\zeta_2$ is a closed curve in
$S^1\times S^2$. Then Equation (\ref{eq:ConnSum}) holds on the chain complex
$$CF^{\circ}(Y\#(S^1\times S^2),\mathfrak s\#\mathfrak s_0)\cong
CF^{\circ}(Y,\mathfrak s)\otimes \widehat{CF}(S^1\times
S^2,\mathfrak s_0).$$ 
\end{prop}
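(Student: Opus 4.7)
The plan is to follow the template of the K\"unneth formula for $\widehat{HF}$ of connected sums (Proposition 6.1 of \cite{OSzAnn2}), adapting it to track the $H_1$-action and to accommodate the partially hat tensor product. First I would choose a weakly admissible pointed Heegaard diagram $(\Sigma_1,\boldsymbol\alpha_1,\boldsymbol\beta_1,z_1)$ for $(Y,\mathfrak s)$ and the standard genus-one diagram $(T^2,\alpha,\beta,z_2)$ for $(S^1\times S^2,\mathfrak s_0)$, in which $\alpha$ and $\beta$ are parallel curves meeting transversely in two points $\theta^+,\theta^-$. After isotoping $\zeta_1$ into $\Sigma_1\setminus\{z_1\}$ and $\zeta_2$ into $T^2\setminus\{z_2\}$, one forms the connected sum at $z_1,z_2$; the resulting pointed Heegaard diagram presents $Y\#(S^1\times S^2)$ with $\zeta_1\cup\zeta_2$ disjoint from the connect-sum neck.

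Next I would invoke the standard neck-stretching argument: for a sufficiently pinched almost-complex structure, every rigid holomorphic disk $\phi$ in the connected-sum diagram factors as $\phi=\phi_1\#\phi_2$ with $\phi_i$ a holomorphic disk in the $i$-th summand, the Maslov indices add, and $n_{z_1}(\phi_1)=n_{z_2}(\phi_2)=n_z(\phi)$. On the $\widehat{CF}(S^1\times S^2,\mathfrak s_0)$ side, this restricts $\phi_2$ to have $n_{z_2}(\phi_2)=0$, so the only contributions come from constant disks at $\theta^\pm$ together with the unique bigon from $\theta^+$ to $\theta^-$ (which plays no role in $\widehat{CF}$ after basepoint truncation). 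This gives the identification $CF^\circ(Y\#(S^1\times S^2),\mathfrak s\#\mathfrak s_0)\cong CF^\circ(Y,\mathfrak s)\otimes\widehat{CF}(S^1\times S^2,\mathfrak s_0)$ as chain complexes, reproducing the tensor-product differential.

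Finally I would verify the action formula. Under the decomposition $\phi=\phi_1\#\phi_2$, the $\alpha$-boundary is the multi-arc $\partial_\alpha\phi=\partial_\alpha\phi_1\cup\partial_\alpha\phi_2$, glued at the neck; since $\zeta_1$ and $\zeta_2$ sit in the two surfaces disjointly from one another,
$$(\zeta_1\cup\zeta_2)\cdot\partial_\alpha\phi=\zeta_1\cdot\partial_\alpha\phi_1+\zeta_2\cdot\partial_\alpha\phi_2.$$
Substituting into the definition of $a^{\zeta_1\cup\zeta_2}$ and separating the sum along these two summands, the first collects to $a^{\zeta_1}(\mathbf x)\otimes\theta^\epsilon$ (using that the $\phi_2$-factor must then be constant, forcing $\theta^\epsilon$ to pass through unchanged) while the second collects to $\mathbf x\otimes a^{\zeta_2}(\theta^\epsilon)$, yielding Equation~\eqref{eq:ConnSum}. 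The main obstacle is bookkeeping in the presence of the $U$-variable on the $Y$-factor: one must verify that disks with $n_z(\phi)>0$, which survive in $CF^\circ(Y)$, still decompose cleanly and that the constant-disk contribution on the $S^1\times S^2$ side correctly absorbs the $\theta^\epsilon$ factor; this amounts to a careful but essentially routine extension of the Ozsv\'ath--Szab\'o neck-stretching analysis together with Lemma~\ref{homologyaction} to justify moving $\zeta_i$ into the intended summand.
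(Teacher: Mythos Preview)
Your approach is essentially the same as the paper's, which simply cites the proof of \cite[Proposition~6.4]{OSzAnn2} (not Proposition~6.1): once the chain-level tensor decomposition is in hand, your key observation $(\zeta_1\cup\zeta_2)\cdot\partial_\alpha\phi=\zeta_1\cdot\partial_\alpha\phi_1+\zeta_2\cdot\partial_\alpha\phi_2$ immediately yields Equation~\eqref{eq:ConnSum}. Two small corrections: the relevant reference is Proposition~6.4, which is precisely the $CF^\circ\otimes\widehat{CF}$ statement for the $S^1\times S^2$ summand, and your sentence ``this restricts $\phi_2$ to have $n_{z_2}(\phi_2)=0$'' is not justified by what precedes it---indeed, the content of Proposition~6.4 beyond Proposition~6.1 is exactly the analysis of disks with $n_z(\phi)>0$ that you flag as ``the main obstacle'' but do not resolve; also, in the standard genus-one diagram there are two bigons from $\theta^+$ to $\theta^-$ (both with $n_z=0$, cancelling mod~2), not one.
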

\begin{proof}
This follows from the proof of \cite[Proposition~6.4]{OSzAnn2}.
\end{proof}

We now have the promised vanishing theorem.

\begin{prop}\label{prop:NSSphere}
Let $\sigma\co \mathbb Z\to\mathbb F$ be the natural quotient map which sends $1$ to
$\mathbf 1$. Let $[\zeta]\in H_1(Y;\mathbb Z)/\mathrm{Tors}$ be a class for which
there exists a two-sphere $S\subset Y$ satisfying
$\sigma([\zeta]\cdot[S])\ne0\in\mathbb F$. Then
$$H_*(\widehat{HF}(Y;\mathbb F),\Curve^{[\zeta]})=0\ ,\ H_*({HF}^+(Y;\mathbb F),\Curve^{[\zeta]})=0.$$
\end{prop}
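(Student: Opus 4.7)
The plan is to reduce to the case where $Y$ contains a distinguished $S^1\times S^2$ connected summand and then combine Proposition~\ref{prop:OneHandle} with a K\"unneth-style acyclicity argument. Since $\sigma([\zeta]\cdot[S])\ne 0$, the intersection number $[\zeta]\cdot[S]$ is odd and in particular nonzero; hence $S$ is non-separating, and standard $3$--manifold topology produces a decomposition $Y\cong Y'\#(S^1\times S^2)$ in which $S$ is isotopic to the $S^2$ factor. Under the induced splitting $H_1(Y;\Z)\cong H_1(Y';\Z)\oplus\Z\langle\gamma\rangle$, with $\gamma$ represented by $S^1\times\{\mathrm{pt}\}$, I would write $[\zeta]=[\zeta']+k[\gamma]$ where $k=[\zeta]\cdot[S]$ is odd.

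Second, I would apply Proposition~\ref{prop:OneHandle}. Since $\widehat{HF}$ and $HF^+$ vanish on non-torsion $\mathrm{Spin}^c$ structures of $S^1\times S^2$, only $\spinc_0$ contributes, yielding
$$CF^\circ(Y,\spinc\#\spinc_0)\;\cong\;CF^\circ(Y',\spinc)\otimes\widehat{CF}(S^1\times S^2,\spinc_0)$$
together with the split formula $\curve^{\zeta}=\curve^{\zeta'}\otimes\mathrm{id}+\mathrm{id}\otimes\curve^{k\gamma}$. Because the second tensor factor has vanishing differential and is two-dimensional over $\F$, this decomposition descends verbatim to homology. Over $\F$ we have $\Curve^{k[\gamma]}=\Curve^{[\gamma]}$ because $k$ is odd, and on $\widehat{HF}(S^1\times S^2)\cong\Lambda^*(\F)$ this is the contraction $\iota_{[\gamma]}$.

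Finally, I would run the algebraic step: $\iota_{[\gamma]}$ sends the top generator to the bottom and the bottom to zero, so the obvious splitting $K$ going the other way satisfies $\iota_{[\gamma]}K+K\iota_{[\gamma]}=\mathrm{id}$. Setting $\widetilde K:=\mathrm{id}\otimes K$, a direct computation in characteristic two gives $\Curve^{[\zeta]}\widetilde K+\widetilde K\Curve^{[\zeta]}=\mathrm{id}$, so $(\widehat{HF}(Y),\Curve^{[\zeta]})$ is acyclic. The argument for $HF^+$ is identical, since Proposition~\ref{prop:OneHandle} applies uniformly to $CF^\circ$, and summing over Spin$^c$ structures on $Y'$ recovers the full statement.

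The main obstacle I anticipate lies in executing the first step cleanly: one must realize $\zeta$ on a Heegaard diagram adapted to the connected-sum decomposition so that it literally takes the form $\zeta'\sqcup(k\gamma)$ required by Proposition~\ref{prop:OneHandle}, and any $\mathrm{Spin}^c$ bookkeeping must be tracked carefully. Once that is in place the remaining steps are formal, being an instance of the general principle that acyclicity of one factor propagates to any tensor product in which the other factor's action is compatible.
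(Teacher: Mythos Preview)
Your proposal is correct and follows essentially the same approach as the paper: decompose $Y$ as $Y'\#(S^1\times S^2)$ using the non-separating sphere, split $[\zeta]$ accordingly, invoke the K\"unneth formula of Proposition~\ref{prop:OneHandle}, and deduce acyclicity from the $S^1\times S^2$ factor. The paper is terser at the final step---it simply computes $H_*(\widehat{HF}(S^1\times S^2;\F),\Curve^{[\zeta_2]})=0$ via the genus-one diagram and says the conclusion follows---whereas you make the contracting homotopy $\widetilde K=\mathrm{id}\otimes K$ explicit; these are two phrasings of the same argument. Your anticipated obstacle about realizing $\zeta$ as $\zeta'\sqcup k\gamma$ on a connected-sum Heegaard diagram is not a real issue: Lemma~\ref{homologyaction} shows the induced map on homology depends only on the class $[\zeta]\in H_1(Y;\Z)/\mathrm{Tors}$, so you are free to replace $\zeta$ by any homologous curve adapted to the splitting.
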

\begin{proof}
Since the sphere $S$ is homologically nontrivial, it is
non-separating. Thus $Y\cong Y_1\#(S^1\times S^2)$, with $S$ isotopic to  $*\times S^2$ in $S^1\times S^2$.  We can express $[\zeta]$ as
$$[\zeta]=[\zeta_1]\oplus[\zeta_2]\in H_1(Y_1)\oplus H_1(S^1\times S^2)\cong H_1(Y),$$ where 
$\sigma([\zeta_2]\cdot[S])\ne0$. An explicit calculation with a genus one Heegaard diagram shows that $\widehat{HF}(S^1\times S^2)$ has
two generators $x,y$ for which 
$$\Curve^{[\zeta_2]}(x)=([\zeta_2]\cdot[S])y\ ,\  \Curve^{[\zeta_2]}(y)=0.$$ Hence $H_*(\widehat{HF}(S^1\times
S^2;\mathbb F),\Curve^{[\zeta_2]})=0$. Our conclusion now follows
from the previous proposition.
\end{proof}

\subsection{Holomorphic polygons and the $\Lambda^*(H_1(Y;\F))$ action }\label{subsec:polygons} 

In this subsection we consider operators on Heegaard Floer homology induced by counting pseudo-holomorphic Whitney polygons, and the interaction of these operators with the $H_1(Y;\Z)/\mathrm{Tors}$ action.  Our main result here is a compatibility relation, Theorem \ref{thm:AinfinityH1}.    This relation will be useful in two ways.  First, it will allow us to understand  the $H_1(Y;\Z)/\mathrm{Tors}$ action in the context of the cobordism maps  constituting the $(3+1)$--dimensional TQFT in \os \ theory.   Second, it will be the key ingredient for showing that the spectral sequence from Khovanov homology to the Heegaard Floer homology of the branched double cover respects the module structure on both theories.

Let $(\Sigma, \etab^0,...,\etab^n,z)$ be a pointed Heegaard $(n+1)$--tuple diagram;  that is, a surface of genus $g$, together with $(n+1)$ distinct $g$--tuples $\etab^i=\{\eta_1^i,...,\eta_g^i\}$ of homologically linearly independent attaching curves as in  \cite[Section 4.2]{OSzBrCov}.   A Heegaard $(n+1)$--tuple diagram naturally gives rise to an oriented $4$--manifold $W_{0,...,n}$ by the pants construction  \cite[Subsection 8.1]{OSzAnn1}.  The boundary of this manifold is given by:
$$\partial W_{0,...,n}= -Y_{0,1}\sqcup -Y_{1,2} \sqcup ... \sqcup -Y_{n-1,n}\sqcup Y_{0,n}, $$ where $Y_{i,j}$ is the $3$--manifold specified by the attaching curves $\etab^i,\etab^j$.   Each $\etab^i$ gives rise to a Lagrangian torus in Sym$^g(\Sigma)$ which we denote $\T_i$. We also require an admissibility hypotheses on two-chains with boundary that realize  homological relations between curves in the different $g$--tuples $\etab^i$, the so-called {\em multi-periodic domains}.   For this section it suffices to assume that all  multi-periodic domains have positive and negative coefficients, a condition which can be achieved by winding their boundary on the Heegaard diagram.  

To a Heegaard $(n+1)$--tuple diagram one can associate a chain map:
$$ f_{0,...,n}: \bigotimes_{i=1}^{n} \CFa(Y_{i-1,i}) \longrightarrow  \CFa(Y_{0,n}),$$
defined by counting pseudo-holomorphic maps of Whitney $(n+1)$--gons into Sym$^g$ with boundary conditions in the Lagrangian tori. On generators,  $f_{0,...,n}$ is given by
$$ f_{0,...,n}(\x_1\otimes...\otimes \x_{n})=\sum_{\y}
\sum_{\underset{\mu(\phi)=2-n, \ n_z(\phi)=0}{\phi\in\pi_2(\x_1,,...,\x_{n},\y)}}{  \#{ \mathcal M}}(\phi)\cm \y,$$
and we extend linearly to the full complex.  

Let us unpack the notation a bit.  The first summation is over all $\y\in\mathbb T_{0}\cap\mathbb T_{n}$.  In the second, we sum over all homotopy classes of Whitney $(n+1)$--gons.  We  restrict attention to only those homotopy classes with Maslov index $\mu(\phi)=2-n$.   This condition ensures that as we vary within the $(n-2)$--dimensional universal family of conformal $(n+1)$--gons, we obtain a finite number of pseudo-holomorphic maps from these  conformal polygons to Sym$^g(\Sigma)$.  We denote this number, reduced modulo $2$, by $\#\mathcal{M}(\phi)$.    When $n=1$ the domain of our pseudo-holomorphic map has a $1$--dimensional automorphism group isomorphic to $\R$, and we consider instead the unparametrized moduli space $\widehat{\mathcal M}={\mathcal M}/\R$.   See \cite[Section 8]{OSzAnn1} and \cite[Section 4]{OSzBrCov} for more details.

 The polygon operators satisfy an $A_\infty$-associativity relation (see \cite[Equation (8)]{OSzBrCov}):
$$ \sum_{0\le i<j\le n} f_{0,...i,j,...n} \ (\theta_1\otimes ...\otimes f_{i,...,j}(\theta_{i+1}\otimes ... \otimes \theta_{j})\otimes...\otimes \theta_{n})=0,$$
where $\theta_i\in \CFa(Y_{i-1,i})$ are chains in the Floer complexes associated to the pairs of Lagrangians assigned to the vertices of the $(n+1)$--gon.   This relation breaks up into a collection of relations, one for each $n>0$.  For $n=1$, the relation simply states that $f_{0,1}: \CFa(Y_{0,1})\rightarrow \CFa(Y_{0,1})$ is a differential. 

Examining the definition of the $H_1(Y;\Z)/\mathrm{Tor}$ action from the previous section, we see that it closely resembles the Floer differential.    Indeed, the action of $\zeta$ is simply the Floer differential weighted by   $\zeta\cm \partial_0 \phi$, the intersection of a curve representing a class in  $H_1(Y;\Z)/\mathrm{Tor}$ with the $\etab^0$--component of the boundary of the image of a pseudo-holomorphic Whitney disk.  Motivated by this similarity we define operators for any closed curve $\zeta\in \Sigma$
$$ 
\curve^\zeta_{0,...,n}\co \bigotimes_{i=1}^{n} \CFa(Y_{{i-1},{i}}) \longrightarrow  \CFa(Y_{{0},{n}}).$$
On generators,  $\curve^\zeta_{0,...,n}$ is given by
$$ \curve^\zeta_{0,...,n}(\x_1\otimes...\otimes \x_{n})=\sum_{\y}
\sum_{\underset{\mu(\phi)=2-n, n_z(\phi)=0}{\phi\in\pi_2(\x_1,...,\x_{n},\y)}}{ (\zeta\cm(\partial_0\phi)) \#{ \mathcal M}}(\phi)\cm \y,$$
and we extend linearly to the full complex.    We have the following $A_\infty$-compatibility relation between the $\curve^\zeta$ and $f$ maps.  

\begin{thm}\label{thm:AinfinityH1} Let $\zeta\in \Sigma$ be a curve on the Heegaard surface of a given $(n+1)$--tuple diagram which is in general position with respect to all the attaching curves.  Let  $f_{0,...n}$ and $\curve^\zeta_{0,...n}$ be the polygon maps defined above.  Then we have the following relation:
$$ \sum_{0\le i<j\le n} \curve^\zeta_{0,...i,j,...n} \ (\theta_1\otimes ...\otimes f_{i,...,j}(\theta_{i+1}\otimes ... \otimes \theta_{j})\otimes...\otimes \theta_{n}) \ \ + $$
$$  \sum_{0< j\le n} f_{0,j,...n} \ (\curve^\zeta_{0,...j}(\theta_1\otimes ...\otimes \theta_{j})\otimes \theta_{j+1}\otimes ... \otimes \theta_{n})=0,$$
Where $\theta_i\in \CFa(Y_{i-1,i})$ are any Heegaard Floer chains.
\end{thm}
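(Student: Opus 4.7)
\bigskip
\noindent\textbf{Proof sketch.}
The plan is to mimic the proof of the $A_\infty$-associativity for the polygon operators, carried over to the $\zeta$-weighted setting. We consider the $1$-dimensional moduli space
\[
\mathcal M_1=\bigsqcup_{\phi}\mathcal M(\phi),
\]
where the union is over homotopy classes $\phi$ of Whitney $(n+1)$-gons with $\mu(\phi)=3-n$ and $n_z(\phi)=0$ connecting fixed boundary data $\theta_1,\ldots,\theta_n$ to some $\y\in\T_0\cap\T_n$. Each component has expected dimension one, and by Gromov compactness together with admissibility one obtains a compact $1$-manifold $\overline{\mathcal M_1}$ whose boundary consists of configurations $\phi=\phi'*\phi''$ splitting as a concatenation of two rigid pseudo-holomorphic polygons. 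Since $\zeta\cdot\partial_0\phi$ depends only on the homotopy class, we weight each component of $\mathcal M_1$ by this integer and conclude that
\[
0\;=\;\sum_{\phi}(\zeta\cdot\partial_0\phi)\,\#\partial\overline{\mathcal M(\phi)}\pmod 2.
\]
This identity will expand to exactly the relation in the theorem.

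The bookkeeping step uses the cellular identity $\partial\phi=\partial\phi'+\partial\phi''$: any shared arc along the gluing appears with opposite orientations on the two sides, so it cancels, giving
\[
\zeta\cdot\partial_0\phi \;=\; \zeta\cdot\partial_0\phi' \;+\; \zeta\cdot\partial_0\phi''
\]
regardless of which Lagrangian the shared arc lies on. The codimension-one degenerations are of three familiar types: (i) strip breaking at an input vertex $\theta_k$ (so $\phi'$ is a strip on $\T_{k-1}\cap\T_k$ and $\phi''$ an $(n+1)$-gon); (ii) strip breaking at the output $\y$ (so $\phi''$ is a strip on $\T_0\cap\T_n$); (iii) a genuine Deligne--Mumford degeneration cutting $\phi$ into an inner $(j-i+1)$-gon with Lagrangians $\T_i,\ldots,\T_j$ and an outer $(n+i-j+2)$-gon with Lagrangians $\T_0,\ldots,\T_i,\T_j,\ldots,\T_n$, for $2\le j-i\le n-1$.

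For each type one asks which of $\phi',\phi''$ carries the $\T_0$-edge of the original polygon. A case-check gives: in (i) with $k\ge 2$ only the outer polygon $\phi''$ involves $\T_0$; in (i) with $k=1$ and in (ii) both factors do; in (iii) the inner polygon has a $\T_0$-edge precisely when $i=0$, while the outer polygon always does. Substituting these into the weighted boundary count, the contribution $(\zeta\cdot\partial_0\phi_{\text{outer}})\cdot\#\phi'\#\phi''$ is precisely a term $a^\zeta$ applied to the outer polygon and yields the summand indexed by $(i,j)$ in the first sum; the contribution $(\zeta\cdot\partial_0\phi_{\text{inner}})\cdot\#\phi'\#\phi''$ arises only when the inner polygon carries a $\T_0$-edge (i.e.\ $i=0$ in (iii), or $k=1$ in (i), or the outer breaks off as a strip in (ii)) and yields the summand $f_{0,j,\ldots,n}(a^\zeta_{0,\ldots,j}(\cdots)\otimes\cdots)$ of the second sum. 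A direct enumeration shows the first sum ranges over all $0\le i<j\le n$ and the second over all $0<j\le n$, matching the statement exactly.

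The main obstacle is purely combinatorial: identifying which end of $\overline{\mathcal M_1}$ corresponds to which $(i,j)$ term, and verifying that the Maslov-index arithmetic $\mu(\phi')+\mu(\phi'')=3-n$ forces one factor to be rigid and the other to be a one-parameter family for the counts $\#\mathcal M(\phi')\cdot\#\mathcal M(\phi'')$ to appear with the claimed multiplicities. The analytic inputs (transversality, Gromov compactness, positivity of domains on an admissible multi-diagram so that only the finitely many $\phi$ with $n_z(\phi)=0$ and $\mu=3-n$ contribute) are identical to those in Subsection~\ref{subsec:H_1} and \cite[Lemma~4.18]{OSzAnn1}, so nothing new is required on that side; the theorem is just the weighted version of the standard $A_\infty$-relation, proved by tracking $\zeta\cdot\partial_0$ through the same degeneration analysis.
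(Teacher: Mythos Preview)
Your proposal is correct and follows essentially the same approach as the paper: both analyze the ends of the one-dimensional moduli space of $(n+1)$-gons with $\mu(\phi)=3-n$, split the weight $\zeta\cdot\partial_0\phi$ additively over the two factors in each degeneration, and observe that the inner polygon contributes to the $f\circ a^\zeta$ sum only when it carries a $\T_0$-edge (i.e.\ when $i=0$). The paper carries this out explicitly for $n=3$ and declares the general case a ``straightforward yet notationally cumbersome extension''; you have supplied exactly that extension.
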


\begin{proof}  We consider the $n=3$ version of the relation. A guiding schematic is shown in Figure \ref{fig:endmoduli}. First we establish some  notation. Suppose   $\theta=\sum_{l=1}^{N} \x_l$ is a chain (recall we work mod 2, so that no coefficient is needed in front of $\x_i$).    Let $$\pi_2(\theta,\p,\q,\s):=\coprod_{l=1,...,N} \{\phi\in \pi_2(\x_l,\p,\q,\s)  \},$$ with similar notation to handle $\pi_2(\p,\theta,\q,\s)$, $\pi_2(\s,\p,\theta,\q)$, and $\pi_2(\p,\q,\s,\theta)$

 Consider the moduli space
$$\mathcal M(\phi\in \pi_2(\theta_1,\theta_2,\theta_3,\theta_4), \mu(\phi)=0):=\coprod_{\underset{\mu(\phi)=0, n_z(\phi)=0}{ \phi\in\pi_2(\theta_1,\theta_2,\theta_3,\theta_4) } } \mathcal M\big(\phi \big).$$ For an appropriate family of almost complex structures on Sym$^g(\Sigma)$, this moduli space is a smooth $1$--dimensional manifold with a Gromov compactification.  
The boundary points of this
compactification are in bijection with points in the following products of zero dimensional moduli spaces:
$$\widehat{\mathcal M}\big(\phi_1\in\pi_2(\theta_1,\rho),\mu(\phi_1)=1\big)\times
\mathcal M\big(\psi_1\in\pi_2(\rho,\theta_2,\theta_3,\theta_4),\mu(\psi_1)=-1\big),$$
$$\widehat{\mathcal M}\big(\phi_2\in\pi_2(\theta_2,\rho),\mu(\phi_2)=1\big)\times
\mathcal M\big(\psi_2\in\pi_2(\theta_1,\rho,\theta_3,\theta_4),\mu(\psi_2)=-1\big),$$
$$\widehat{\mathcal M}\big(\phi_3\in\pi_2(\theta_3,\rho),\mu(\phi_3)=1\big)\times
\mathcal M\big(\psi_3\in\pi_2(\theta_1,\theta_2,\rho,\theta_4),\mu(\psi_3)=-1\big),$$
$$\mathcal M\big(\phi_4\in\pi_2(\theta_1,\theta_2,\theta_3,\rho),\mu(\phi_4)=-1\big) \times \widehat{\mathcal M}\big(\psi_4\in\pi_2(\rho,\theta_4),\mu(\psi_4)=1\big)
,$$
and 
 $${\mathcal M}\big(\phi_5\in\pi_2(\theta_2,\theta_3,\rho),\mu(\phi_5)=0\big)\times
\mathcal M\big(\psi_5\in\pi_2(\theta_1,\rho,\theta_4),\mu(\psi_5)=0\big),$$
$${\mathcal M}\big(\phi_6\in\pi_2(\theta_1,\theta_2,\rho),\mu(\phi_6)=0\big)\times
\mathcal M\big(\psi_6\in\pi_2(\rho,\theta_3,\theta_4),\mu(\psi_6)=0\big).$$  In each case $\rho$ is dummy variable ranging over all chains in the appropriate Floer complex.  Thus we are considering moduli spaces associated to all possible decompositions of the homotopy classes in $\pi_2(\theta_1,\theta_2,\theta_3,\theta_4)$ with $\mu(\phi)=0$ and $n_z(\phi)=0$ into homotopy classes of bigons concatenated with rectangles and triangles concatenated with triangles. For each $\phi\in \pi_2(\theta_1,\theta_2,\theta_3,\theta_4)$ with $\mu(\phi)=0$ we expand the equality
$$(\zeta\cdot\partial_{0}\phi)\#(\partial\mathcal M(\phi))=0,$$ 
and observe that since $\phi_i*\psi_i=\phi$ for each $i=1,...,6$, we have
$$\zeta\cdot\partial_{0}\phi=\zeta\cdot\partial_{0}\phi_i+\zeta\cdot\partial_{0}\psi_i.$$

\begin{figure}
\centering
\def\svgwidth{4.5in}
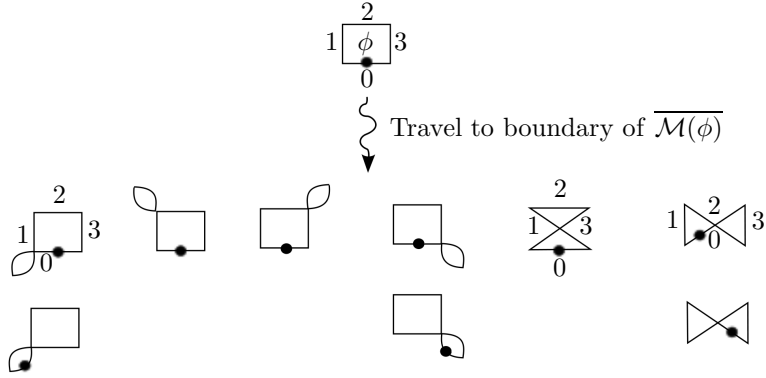
\caption{\label{fig:endmoduli}Schematic for $n=3$  case of Theorem \ref{thm:AinfinityH1}. One dimensional moduli spaces of rectangles are considered, with boundary conditions in the numbered Lagrangians. The compactified  moduli spaces have six types of boundary points.  A dark circle on a boundary arc labeled ``$0$" indicates that we weight the count of points in a homotopy class by  the intersection of the $\etab^0\subset \Sigma$ portion of the boundary  with the curve $\zeta\subset \Sigma$.   Conservation of intersection number ensures that the sum of the weights in any vertical column equals $\zeta\cm\partial_0\phi$.  The  $9$ types of weighted boundary point counts are in bijection with the terms in the $n=3$ $A_\infty$-relation for the $\zeta$-action.  }

\end{figure}

\noindent Summing over all $\phi\in \pi_2(\theta_1,\theta_2,\theta_3,\theta_4)$, the terms in the expansion   correspond to the coefficient of $\theta_4$ in 
$$ \curve^\zeta_{0,1,2,3}(f_{0,1}(\theta_1)\otimes\theta_2\otimes\theta_3)+f_{0,1,2,3}(\curve^\zeta_{0,1}(\theta_1)\otimes \theta_2\otimes \theta_3) 
$$ $$+ \curve^\zeta_{0,1,2,3}(\theta_1\otimes f_{1,2}(\theta_2)\otimes\theta_3) $$ $$+
\curve^\zeta_{0,1,2,3}(\theta_1\otimes\theta_2\otimes f_{2,3}(\theta_3))$$ $$+ \curve^\zeta_{0,3}(f_{0,1,2,3}(\theta_1\otimes \theta_2\otimes \theta_3)) + f_{0,3}(\curve^\zeta_{0,1,2,3}(\theta_1\otimes \theta_2\otimes \theta_3)) $$ $$ +
\curve^\zeta_{0,1,3}(\theta_1\otimes f_{1,2,3}(\theta_2\otimes\theta_3)) $$  $$ +
\curve^\zeta_{0,2,3}(f_{0,1,2}(\theta_1\otimes\theta_2)\otimes\theta_3) 
 + f_{0,2,3}(\curve^\zeta_{0,1,2}(\theta_1\otimes \theta_2)\otimes \theta_3).$$ The terms in the six lines correspond to the contribution from the six types of boundary points for the $1$--dimensional moduli space, listed above.   Note that the asymmetry between the $a\circ f$ and $f\circ a$ terms is a result of the fact that      $$\zeta\cm \partial_0 \phi_2=\zeta\cm \partial_0\phi_3=\zeta\cm \partial_0\phi_5=0,  $$ which, in turn, is due to the fact that these homotopy classes have boundary conditions outside of $\T_0$.
This proves the proposition for $n=3$.  The general case is a straightforward yet notationally cumbersome extension.
\end{proof}

\subsection{Cobordisms} \label{sec:Cob}
In this section we use the compatibility relation between the polygon operators $\curve^{\zeta}$ and $f$ given by Theorem \ref{thm:AinfinityH1} to understand the behavior of the \os \ cobordism maps with respect to the $H_1(Y;\Z)/\mathrm{Tors}$ action.  Our main goal is Theorem \ref{thm:Cob}, which shows that the maps on Floer homology induced by a cobordism $W$ from $Y_1$ to $Y_2$ commute with the action, provided that the curves in $Y_1$ and $Y_2$ inducing the action are homologous in $W$.

The key is the $n=2$  compatibility relation.  Let $(\Sigma,\etab^0,\etab^1,\etab^2,z)$
be a pointed Heegaard triple-diagram,  and let 
$$f_{0,1,2}\co
{\CFa}(Y_{0,1}) \otimes
{\CFa}(Y_{1,2})\to
{\CFa}(Y_{0,2})$$ denote
the polygon map from the previous section, which counts pseudo-holomorphic Whitney triangles.  Let $\Theta \in {\CFa}(Y_{1,2})$ be a cycle. Then we define a map
 $f_{\Theta}: \CFa(Y_{0,1})\longrightarrow \CFa(Y_{0,2})$ by $$ f_{\Theta}(-):=f_{0,1,2}(-\otimes\Theta).$$  The $A_\infty$ relation for $n=2$ states that
\begin{eqnarray*}
 &&f_{0,1,2}(f_{0,1}(-)\otimes \Theta)+ f_{0,1,2}(-\otimes f_{1,2}(\Theta))+ f_{0,2}(f_{0,1,2}(-\otimes\Theta) \\
&=& f_{\Theta}\circ \partial_{0,1}+ 0 + \partial_{0,2}\circ f_{\Theta}=0,
\end{eqnarray*}
 where we have used the fact that $\Theta$ is a cycle for the $f_{1,2}$  differential.  This shows that $f_\Theta$ is a chain map. 
Note that $Y_{1,2}$ could be any $3$--manifold for the moment, and $\Theta$ an arbitrary cycle.   In many concrete situations $Y_{1,2}\cong \#^n S^1\times S^2$ and $\Theta$ can be assumed to be a generator, $\Theta={\mathbf y}\in\mathbb T_{1}\cap \mathbb T_{2}$.

Given a closed curve $\zeta\subset \Sigma$ that is in general position with respect to the attaching curves, we can regard $\zeta$ as a curve in $Y_{0,1}$ and $Y_{0,2}$.   As such, we have an action by $\zeta$ on $\CFa(Y_{0,1})$ and $\CFa(Y_{0,2})$.     The following lemma shows that the induced maps on homology commute with that of $f_{\Theta}$.

\begin{lem}\label{lem:Af=fA}  Given a pointed Heegaard triple diagram, $(\Sigma,\etab^0,\etab^1,\etab^2,z)$, let $f_{\Theta}=f_{0,1,2}(-\otimes \Theta)$ be the chain map from $\CFa(Y_{0,1})$ to $\CFa(Y_{0,2})$ associated to a cycle $\Theta\in \CFa(Y_{1,2})$.  Let $\zeta\subset \Sigma$ be a curve and $\curve^{\zeta_0},\curve^{\zeta_1}$ denote the actions of $\zeta$ on
$\CFa(Y_{0,1})$ and $\CFa(Y_{0,2})$, respectively.
Then $f_{\Theta}\circ \curve^{\zeta_0}$ and
$\curve^{\zeta_1}\circ f_{\Theta}$ are chain homotopic.
\end{lem}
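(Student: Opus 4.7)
The plan is to derive this lemma as essentially a direct corollary of Theorem \ref{thm:AinfinityH1}, specialized to $n=2$. The whole point of building the $A_\infty$-compatibility relation was to handle exactly this kind of commutation statement, so no new moduli-space analysis is needed; all that remains is bookkeeping.

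First, I would unpack Theorem \ref{thm:AinfinityH1} for $n=2$ applied to the pair $(\theta_1,\Theta)$ with $\theta_1\in \CFa(Y_{0,1})$ and $\theta_2=\Theta\in \CFa(Y_{1,2})$. The first sum in the relation contributes the three terms
$$ \curve^\zeta_{0,1,2}\bigl(f_{0,1}(\theta_1)\otimes\Theta\bigr),\quad \curve^\zeta_{0,1,2}\bigl(\theta_1\otimes f_{1,2}(\Theta)\bigr),\quad \curve^\zeta_{0,2}\bigl(f_{0,1,2}(\theta_1\otimes\Theta)\bigr), $$
while the second sum contributes
$$ f_{0,1,2}\bigl(\curve^\zeta_{0,1}(\theta_1)\otimes\Theta\bigr),\quad f_{0,2}\bigl(\curve^\zeta_{0,1,2}(\theta_1\otimes\Theta)\bigr). $$
Adding these five terms gives zero.

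Next, I would use the hypotheses to cancel two of the terms. Since $\Theta$ is a cycle, $f_{1,2}(\Theta)=0$, so the middle term in the first sum vanishes. The remaining identity, rewritten in terms of the differentials and $\zeta$-actions on $\CFa(Y_{0,1})$ and $\CFa(Y_{0,2})$ (with $\F=\Z/2\Z$ coefficients, so signs disappear), reads
$$ f_\Theta\bigl(\curve^{\zeta_0}(\theta_1)\bigr) + \curve^{\zeta_1}\bigl(f_\Theta(\theta_1)\bigr) = \partial_{0,2}\bigl(\curve^\zeta_{0,1,2}(\theta_1\otimes\Theta)\bigr) + \curve^\zeta_{0,1,2}\bigl(\partial_{0,1}(\theta_1)\otimes\Theta\bigr). $$

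Finally, I would define the chain homotopy $H\co \CFa(Y_{0,1})\to \CFa(Y_{0,2})$ by $H(\theta):=\curve^\zeta_{0,1,2}(\theta\otimes\Theta)$. The displayed equation above is then exactly $f_\Theta\circ \curve^{\zeta_0} + \curve^{\zeta_1}\circ f_\Theta = \partial_{0,2}\circ H + H\circ \partial_{0,1}$, which is the desired chain-homotopy relation. There is no serious obstacle here; the only thing to verify is that the bookkeeping of which triangle/bigon counts appear in which term of Theorem \ref{thm:AinfinityH1} matches the description of $f_\Theta$, $\curve^{\zeta_0}$, and $\curve^{\zeta_1}$, which is immediate from their definitions.
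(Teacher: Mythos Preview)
Your proof is correct and follows essentially the same approach as the paper: both specialize Theorem~\ref{thm:AinfinityH1} to $n=2$, use $f_{1,2}(\Theta)=0$ to kill one term, and take $H(\theta)=\curve^\zeta_{0,1,2}(\theta\otimes\Theta)$ as the chain homotopy.
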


\begin{proof}  The $n=2$ version of Theorem \ref{thm:AinfinityH1} states that $$ \curve_{0,1,2}^\zeta(f_{0,1}(-)\otimes \Theta)+  \curve_{0,1,2}^\zeta(-\otimes f_{1,2}(\Theta))+\curve_{0,2}^\zeta(f_{0,1,2}(-\otimes \Theta)) +$$ $$+ f_{0,1,2}(\curve_{0,1}^\zeta(-)\otimes\Theta)+ f_{0,2}(\curve_{0,1,2}^\zeta(-)\otimes \Theta)=0.$$ Let $H(-):=\curve_{0,1,2}^\zeta(-\otimes\Theta)$.  Using the fact that $f_{1,2}(\Theta)=0$, we get 
$$H\circ\partial_{0,1}  +\curve^{\zeta_1}\circ f_\Theta+ f_\Theta\circ \curve^{\zeta_0}+ \partial_{0,2}\circ H=0.$$ In other words, $\curve_{0,1,2}^\zeta$ provides the requisite   chain homotopy. 
\end{proof}

The following theorem concerns the naturality of the homological
action under the homomorphisms induced by cobordisms. Similar
results can be found in \cite{OSzFour}.

\begin{thm}\label{thm:Cob}
Suppose $Y_1,Y_2$ are two closed, oriented, connected
$3$--manifolds, and $W$ is a cobordism from $Y_1$ to $Y_2$. Let
$$\widehat{F}_{W}\co \HFa(Y_1)\longrightarrow
\HFa(Y_2)$$ be the homomorphism induced by
$W$. Suppose $\zeta_1\subset Y_1$, $\zeta_2\subset Y_2$ are two
closed curves which are homologous in $W$. Then 
$$\widehat{F}_{W}\circ\Curve^{[\zeta_1]}= \Curve^{[\zeta_2]}\circ\widehat{F}_{W}.$$
\end{thm}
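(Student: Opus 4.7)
The plan is to reduce Theorem~\ref{thm:Cob} to the case of a single handle attachment by using the \os\ definition of $\widehat{F}_W$. Decompose $W = W_1 \cup W_2 \cup W_3$, where $W_k$ consists of the $k$--handle attachments, and let $\widehat{F}_W = \widehat{F}_{W_3}\circ \widehat{F}_{W_2}\circ \widehat{F}_{W_1}$ be the corresponding factorization. Let $Y_1 = N_0 \to N_1 \to N_2 \to N_3 = Y_2$ be the intermediate $3$--manifolds. Using that $H_1(Y_1)\to H_1(W)$ and $H_1(Y_2)\to H_1(W)$ factor through the $H_1$ of each $N_i$, one checks that there is a sequence of classes $[\zeta_1] = [\eta_0], [\eta_1], [\eta_2], [\eta_3] = [\zeta_2]$ with $[\eta_{i-1}]$ and $[\eta_i]$ homologous in $W_i$. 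Hence it suffices to prove the theorem separately for each $W_i$.

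For the $1$--handle piece $W_1$, the map $\widehat{F}_{W_1}$ is defined, via the K\"unneth isomorphism, by tensoring with the top--degree generator of $\widehat{HF}(S^1\times S^2)$ corresponding to the newly created summand. Given curves $\eta_0\subset N_0$ and $\eta_1\subset N_1$ homologous in $W_1$, we may isotope $\eta_1$ off the belt sphere of the $1$--handle; its resulting class in $H_1(N_1) \cong H_1(N_0)\oplus H_1(S^1\times S^2)$ then has zero component in the $S^1\times S^2$ factor and equals $[\eta_0]$ in the first factor. Proposition~\ref{prop:OneHandle} together with Equation~\eqref{eq:ConnSum} immediately yields that $\widehat{F}_{W_1}$ intertwines the two actions. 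The argument for $3$--handles $W_3$ is entirely analogous, using the dual description tensoring with the bottom--degree generator.

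The main step is the $2$--handle piece $W_2$, and is where Theorem~\ref{thm:AinfinityH1} does the work. Choose a Heegaard triple diagram $(\Sigma,\alphas,\betas,\gammas,z)$ subordinate to the framed link for $W_2$, so that $Y_{\alpha,\beta} = N_1$, $Y_{\alpha,\gamma} = N_2$, $Y_{\beta,\gamma}\cong \#^k(S^1\times S^2)$, and $\widehat{F}_{W_2}(-) = f_\Theta(-) = f_{0,1,2}(-\otimes \Theta)$ for $\Theta\in\widehat{CF}(Y_{\beta,\gamma})$ the top generator. Since $\eta_1,\eta_2$ are homologous in $W_2$ and any closed curve in $N_i$ can be isotoped onto $\Sigma$, we can represent both by immersed curves on $\Sigma$ in general position with the attaching curves. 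Using the standard description
$$H_1(W_2)\cong H_1(\Sigma)\big/\bigl(\alphas,\betas,\gammas\bigr),$$
their difference bounds a $2$--chain on $\Sigma$ whose boundary consists of $\alpha$--, $\beta$--, and $\gamma$--curves, and the argument of Lemma~\ref{homologyaction} (applied in $\Sigma$ rather than in a Heegaard surface for a single $3$--manifold) shows that we may replace both curves by a \emph{single} curve $\zeta\subset\Sigma$ inducing $A^{[\eta_1]}$ on $\widehat{HF}(N_1)$ and $A^{[\eta_2]}$ on $\widehat{HF}(N_2)$, up to chain homotopy. Lemma~\ref{lem:Af=fA} then gives $f_\Theta\circ a^{\zeta_0} \simeq a^{\zeta_1}\circ f_\Theta$, which on homology is exactly $\widehat{F}_{W_2}\circ A^{[\eta_1]} = A^{[\eta_2]}\circ\widehat{F}_{W_2}$.

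The primary obstacle is this last realization step: from a homology in the $4$--manifold $W_2$, one must extract a homology between immersed representatives that lives entirely on the Heegaard surface $\Sigma$. Once this is arranged, all work has been done by Theorem~\ref{thm:AinfinityH1}; the remaining steps (handle decomposition, stabilization invariance, independence of the chain of intermediate classes $[\eta_i]$) are routine bookkeeping of the type already present in \cite{OSzFour}.
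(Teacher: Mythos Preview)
Your argument is correct, and it reaches the same conclusion via the same essential ingredients (Proposition~\ref{prop:OneHandle} for $1$-- and $3$--handles, Lemma~\ref{lem:Af=fA} for $2$--handles), but the transport of the homology class through the cobordism is organized differently than in the paper. You decompose $W$ by handle index, produce intermediate classes $[\eta_i]$ by pushing $[\zeta_1]$ forward and $[\zeta_2]$ backward, and then for the $2$--handle piece you use the presentation $H_1(W_2)\cong H_1(\Sigma)/(\alphas,\betas,\gammas)$ to manufacture a single curve $\zeta\subset\Sigma$ representing both $[\eta_1]$ in $H_1(N_1)$ and $[\eta_2]$ in $H_1(N_2)$. (Concretely: lift $[\eta_1],[\eta_2]$ to $c_1,c_2\in H_1(\Sigma)$, write $c_1-c_2=a+b+g$ with $a,b,g$ in the spans of $\alphas,\betas,\gammas$, and take $\zeta$ to represent $c_1-a-b=c_2+g$; then Lemma~\ref{homologyaction} applies in each of $N_1,N_2$ separately.) This is exactly the ``realization step'' you flag, and it does go through.

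The paper instead avoids this Heegaard--surface bookkeeping by a geometric trick: it takes a connected surface $S\subset W$ with $\partial S=\zeta_2-\zeta_1$, lets $W_1'$ be a regular neighborhood of $Y_1\cup S'$ (with $S'$ obtained by trimming a collar off $S$ near $Y_2$), and observes that $W_1'$ is built from $Y_1\times I$ by $1$--handles alone. This absorbs the entire homology between $\zeta_1$ and $\zeta_2$ into a $1$--handle sub-cobordism, so that in the remaining piece the curve is literally the trace of a fixed $\zeta_1'$ and Lemma~\ref{lem:Af=fA} applies with no further $H_1$ computation. Your approach is more algebraic and follows the standard handle filtration; the paper's is more geometric and sidesteps the need to compare two curves on the triple diagram. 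Both are valid, and both ultimately rest on Theorem~\ref{thm:AinfinityH1} through Lemma~\ref{lem:Af=fA}.
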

\begin{proof}
Since $\zeta_1$ and $\zeta_2$ are homologous in $W$, there exists
an oriented proper surface $S\subset W$ connecting $\zeta_1$ to
$\zeta_2$. By adding tubes between the components of $S$, we can assume
that it is connected.  Let $S'$ denote the surface obtained by removing a collar neighborhood of the components of the boundary of $S$ lying in $Y_2$. 

Now let $W_1'$ be a neighborhood of $Y_1\cup S'$ in $W$. Then
$W_1'$ can be obtained from $Y_1\times I$ by adding $1$--handles.  
Thus $\partial W_1'=-Y_1\sqcup Y_1'$, where $Y_1'\cong Y_1\#^kS^1\times S^2$.   The boundary of $S'$ induces a curve $\zeta_1'\subset Y_1'$.
Using Lemma~\ref{homologyaction} and Proposition~\ref{prop:OneHandle}, we see that the conclusion of the theorem 
holds for the cobordism $W_1'$ and the curves $\zeta_1,\zeta_1'$.
                                                      
The cobordism $W$ can thus be decomposed as $W_1'\cup_{Y_1'}W_2$, where
$W_2$ consists of $1$--, $2$--, and $3$--handles which
are disjoint from $S$; indeed, the portion of $S$ in $W_2$ is simply the trace of $\zeta_1'$  in this cobordism. The map on Floer homology induced by a cobordism is defined by associating chain maps to handle attachments in a handle decomposition \cite{OSzFour}.  As above, Lemma \ref{homologyaction}  and Proposition~\ref{prop:OneHandle} show that the maps associated to the $1$-- and $3$--handles in $W_2$ commute with the action.   Commutativity of the action with the maps associated to the $2$--handles is ensured by Lemma~\ref{lem:Af=fA} (the $2$--handle maps are defined as $f_\Theta$ for an appropriate Heegaard triple diagram and choice of $\Theta$). Hence our
conclusion for $W$ holds by the composition law for cobordism maps, \cite[Theorem 3.4]{OSzFour}.
\end{proof} 

\begin{rem} The above theorem has obvious generalizations in two directions.  First, one could refine the theorem to account for $\SpinC$ structures: to a cobordism $W$ equipped with a $\SpinC$ structure $\spinct$ we have maps between the Floer homology groups $\HFa(Y_1,\spinct|_{Y_1})$ and $\HFa(Y_2,\spinct|_{Y_2})$, and one can show that the commutativity theorem respects this structure.  Second, one can prove the same theorem for the other versions of Floer homology.  In this situation, however, one must take care.  We have been considering the sum of cobordism maps associated to all $\SpinC$ structures on $W$.  This cannot, in general, be done with the minus and  infinity versions of Floer homology, since these versions require admissibility hypotheses that cannot be simultaneously achieved for all $\SpinC$ structures with a single Heegaard diagram.  For that reason, the commutativity theorem in these versions must incorporate  $\SpinC$ structures.  Alternatively, one can continue to sum over all $\SpinC$ structures if we consider the ``completed" versions of $\CFm$ and $\CFinf$ with base rings $\F[[U]]$ and $\F[[U,U^{-1}]$, respectively.  Similar remarks hold for the $A_\infty$ compatibility relation, Theorem \ref{thm:AinfinityH1}.
\end{rem}

\subsection{Heegaard Floer homology with twisted
coefficients}\label{sec:Twist}

Suppose $Y$ is a closed oriented $3$--manifold, $\mathfrak
s\in\mathrm{Spin}^c(Y)$. Let
$$(\Sigma,\mbox{\boldmath${\alpha}$},
\mbox{\boldmath$\beta$},z)$$ be an admissible Heegaard diagram for
$(Y,\mathfrak s)$. Let $\mathcal
R=\mathbb F[T,T^{-1}]$. 

Given $[\omega]\in H^2(Y;\mathbb Z)$, let $\eta\subset\Sigma$ be a
closed curve which represents the Poincar\'e dual of $\omega$ in
$H_1(Y;\mathbb Z)$. Let $\underline{CF}^{\infty}(Y,\mathfrak
s;\mathcal R_{\eta})$ be the $\mathcal R[U,U^{-1}]$--module freely generated
by $\mathbf x\in\mathbb
T_{\alpha}\cap\mathbb T_{\beta}$ which represent $\mathfrak s$. The differential $\underline{\partial}$ is
defined by
$$\underline{\partial}(\x)=\sum_{\mathbf
y}\sum_{\stackrel{\scriptstyle\phi\in\pi_2(\mathbf x,\mathbf y)}
{\mu(\phi)=1}}\#\big(\mathcal M(\phi)/\mathbb R\big)
T^{\eta\cdot\partial_{\alpha}\phi}U^{n_z(\phi)}\y.$$ The
homology of this chain complex depends on $\eta$ only through its
homology class in $H_1(Y)$. Similarly, there are chain complexes
$\underline{CF}^{\pm}(Y,\mathfrak s;\mathcal R_{\eta})$ and
$\underline{\widehat{CF}}(Y,\mathfrak s;\mathcal R_{\eta})$. Their
homologies are called the $\omega$--twisted Floer homologies,
denoted $$\underline{HF}^{\circ}(Y,\mathfrak s;\mathcal R_{\eta})
\quad\text{or}\quad \underline{HF}^{\circ}(Y,\mathfrak s;\mathcal
R_{[\omega]})$$ when there is no confusion.

The field $\mathbb F=\mathcal R/(T-1)\mathcal R$ is also an $\mathcal
R$--module. By definition
$$CF^{\circ}(Y;\mathbb F)=\underline{CF}^{\circ}(Y;\mathcal R_{[\omega]})\otimes_{\mathcal R}\mathbb F$$
is the usual untwisted Heegaard Floer chain complex over $\mathbb
F$.

There are chain maps on the $\omega$--twisted chain complex
induced by cobordisms, as in Ozsv\'ath--Szab\'o
\cite{OSzAnn1,OSzAnn2}, Jabuka--Mark \cite{JM} and Ni
\cite{NiNSSphere}. More precisely, suppose $W\co Y_1\to Y_2$ is a
cobordism, $[\Omega]\in H^2(W;\mathbb Z)$. Let
$[\omega_1],[\omega_2]$ be the restriction of $[\Omega]$ to $Y_1$
and $Y_2$, respectively. Then there is a map
$$\underline{F}^{\circ}_{W;[\Omega]}\co\underline{HF}^{\circ}(Y_1;\mathcal R_{[\omega_1]})\to
\underline{HF}^{\circ}(Y_2;\mathcal R_{[\omega_2]}).$$

We can also define the $\Lambda^*(H_1(Y)/\mathrm{Tors})$ action on
the $\omega$--twisted Floer homology by letting
$$\curve^{\zeta}(\x)=\sum_{\mathbf y\in\mathbb T_{\alpha}\cap\mathbb T_{\beta}}
\sum_{\{\phi\in\pi_2(\mathbf x,\mathbf
y)|\mu(\phi)=1\}}\big(\zeta\cdot(\partial_{\alpha}\phi)\big)\:\#\widehat{\mathcal M}(\phi)
T^{\eta\cdot\partial_{\alpha}\phi}U^{n_z(\phi)}\y.$$
As in
Subsection~\ref{sec:Cob}, there are twisted versions of
(\ref{eq:ConnSum}) and Theorem~\ref{thm:Cob}.

\section{Module structures and the link surgeries spectral sequence}

The connection between Khovanov homology and Heegaard Floer homology arises from a calculational tool called the {\em link surgeries spectral sequence}, \cite[Theorem~4.1]{OSzBrCov}. Roughly, this device takes as input a framed link  in a $3$--manifold, with output a  filtered complex.   The homology of the complex is isomorphic to the  Heegaard Floer homology of the underlying $3$--manifold, and the $E_1$ term of the associated spectral sequence splits as a direct sum of the Heegaard Floer groups of the manifolds obtained by surgery on the link with varying surgery slopes. The differentials in the spectral sequence are induced by the holomorphic polygon maps of Subsection~\ref{subsec:polygons}.   Applying this machinery to a particular surgery presentation of the $2$-fold branched cover of a link produces the spectral sequence from Khovanov homology to the Heegaard Floer homology.

Our purpose in this section is to refine the link surgeries spectral sequence to incorporate the $\HFmod$--module structure on Floer homology, and subsequently relate this structure to the module structure on Khovanov homology.   In Subsection   \ref{subsec:lsss}  we prove that a curve in the complement of a framed link induces a filtered chain map acting on the complex giving rise to the link surgeries spectral sequence, Theorem \ref{thm:ssnaturality}.  In Subsection \ref{subsec:khss} we use these filtered chain maps to endow the  spectral sequence from Khovanov homology to the Floer homology of the branched double cover with a module structure.  This  module structure allows us to prove a ``collapse" result for the spectral sequence (Proposition \ref{collapse}).  This result  states that if the Khovanov module of a link is isomorphic to that of the unlink then this module is also isomorphic to the Floer homology of the branched double cover.   Since our results involve filtered chain maps and the morphisms they induce on   spectral sequences, we begin with a digression on spectral sequences and their morphisms.  The reader familiar with these concepts may wish to skip ahead to Subsection \ref{subsec:lsss}, but  is warned that our perspective on spectral sequences, while equivalent to the standard treatment, is slightly non-standard.

\subsection{A review of spectral sequences and their morphisms}\label{subsec:ssconstruction}

Suppose that a complex  of vector spaces $(C,d)$ admits a decomposition  \begin{equation}\label{eq:decomp} C=\underset{i\ge 0}\bigoplus\ C^{(i)},
\end{equation}
which is respected by the differential, in the sense that $d=d^{(0)}+d^{(1)}+d^{(2)}+...$, where $d^{(m)}: C^{(i)}\rightarrow C^{(i+m)}$ for each $i$.  From this structure, one can construct a spectral sequence; that is, a sequence of chain complexes $\{(E_r,\delta_r)\}_{r=0}^{\infty}$ satisfying $H_*(E_r,\delta_r)\cong E_{r+1}$.   Under mild  assumptions, one has $\delta_r=0$ for all sufficiently large $r$, and the resulting limit satisfies  $E_\infty\cong H_*(C,d)$.    Typically, one constructs such a spectral sequence by noting that the  subcomplexes, $$F^p= \underset{i\ge p} \bigoplus\ C^{(i
)},$$
form a filtration $C=F^0\supset F^1\supset ...,$ and then appealing to the well-known construction of the spectral sequence associated to a filtered complex  (see, e.g. \cite{McCleary}). 

For the purpose of understanding morphisms of spectral sequence induced by filtered chain maps, we find it more transparent to construct the spectral sequence by a method called  {\em iterative cancellation} or {\em reduction}, a procedure we  briefly recall.  The method relies on the following well-known lemma:

\begin{lem}\label{lem:cancel}  Let $(C,d)$ be a chain complex of $R$-modules, freely generated by chains  $\{\bf x_i\}_{i\in I}$, and suppose that $d({\bf x_k}, {\bf x_l})=1$, where $d({\bf a},{\bf b})$ denotes the coefficient of $\bf b$ in $d(\bf a)$.    Then we can define a complex $(C',d')$,  freely generated by $\{{\bf x_i} | i\ne k,l\}$, which is chain homotopy equivalent to $(C,d)$.  
\end{lem}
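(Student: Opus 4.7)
The plan is a standard Gaussian elimination argument. The hypothesis $d(x_k,x_l)=1$ identifies an ``invertible'' matrix entry of the differential, which allows us to split off a two-dimensional acyclic summand (up to chain homotopy) generated by $x_k$ and $dx_k$, leaving the remaining basis vectors to form the desired complex $(C',d')$.

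I would proceed as follows. Let $C'$ denote the free $R$-module on $\{x_i : i\neq k,l\}$, write $a_{ij}:=d(x_i,x_j)$, and introduce the $R$-linear maps
$$\iota : C' \to C,\qquad \iota(x_i) = x_i - a_{il}\,x_k,$$
$$\pi : C \to C',\qquad \pi(x_i) = x_i\ (i\neq k,l),\ \pi(x_k)=0,\ \pi(x_l) = -\sum_{j\neq k,l} a_{kj}\,x_j.$$
The formula for $\iota$ is chosen so that the $x_l$-component of $d(\iota(x_i))$ vanishes (using $a_{kl}=1$), while the formula for $\pi$ is the unique linear extension that annihilates both $x_k$ and $dx_k$. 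Set $d' := \pi \circ d \circ \iota$.

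The verification has four routine steps. (i) $\pi\circ\iota = \mathrm{id}_{C'}$ is immediate from the definitions. (ii) $\iota$ and $\pi$ are chain maps; for $\iota$, matching the $x_k$-coefficients of $d\,\iota(x_i)$ and $\iota\,d'(x_i)$ reduces to the identity
$$a_{ik} - a_{il}a_{kk} + \sum_{j\neq k,l}(a_{ij}-a_{il}a_{kj})\,a_{jl} = 0,$$
which is a direct algebraic consequence of the $x_l$-coefficients of $d^2(x_i)=0$ and $d^2(x_k)=0$. (iii) $(d')^2=0$ then follows formally. (iv) The homotopy $\iota\pi \simeq \mathrm{id}_C$ is realized by the explicit $R$-linear map $h: C \to C$ with $h(x_l) = -x_k$ and $h = 0$ on all other generators; a generator-by-generator check gives $\mathrm{id}_C - \iota\pi = dh + hd$.

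I expect the main bookkeeping obstacle to be tracking the off-diagonal entries $a_{kk}$, $a_{ll}$, and the cross terms $a_{ki}$ in the chain-map verification for $\iota$, since $d^2=0$ imposes precisely the quadratic relations among these scalars needed for the identity above. There is no conceptual obstacle beyond this finite matrix calculation; once the explicit formulas are in place, every identity required reduces to a coefficient of $d^2=0$ in the original basis, yielding the chain homotopy equivalence $(C,d)\simeq (C',d')$.
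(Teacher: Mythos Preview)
Your proposal is correct and is essentially the paper's proof written out in coordinates: your maps $\iota$ and $\pi$ are precisely the paper's chain homotopy equivalences $g=(\mathbb{I}-hd)\circ\iota_0$ and $f=\pi_0\circ(\mathbb{I}-dh)$ (where $\iota_0,\pi_0$ denote the naive inclusion and projection and $h$ is the paper's homotopy $h(x_l)=x_k$), and your $d'=\pi\, d\,\iota$ agrees with the paper's $d'=\pi_0(d-dhd)\iota_0$ because $hdh=h$. The only slip is the sign of your homotopy---with your conventions the generator-by-generator check in step~(iv) forces $h(x_l)=+x_k$ rather than $-x_k$.
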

\begin{proof} Let $h:C\rightarrow C$ be the module homomorphism defined by $h(\x_l)=\x_k$ and $h(\x_i)=0$ if $i\ne l$.  Then the differential on $C'$ is given by $$d'= \pi\circ(d-d h d)\circ \iota,$$ where $\pi: C\rightarrow C'$ and $\iota: C'\rightarrow C$ are the natural projection and inclusions.  
Now the chain maps  $f:C\rightarrow C'$ and $g:C'\rightarrow C$  defined by $$f=\pi\circ( \mathbb{I} - d\circ h)  \ \ \ \ \ g=(\mathbb{I} - h\circ d)\circ \iota, $$ are mutually inverse chain homotopy equivalences.  Indeed, $f\circ g = \mathbb{I}_{C'}$ and $g\circ f\sim \mathbb{I}_{C}$ via the homotopy $h$.
\end{proof}
\noindent Of course  we can employ the lemma under the weaker assumption that $d({\bf x_k}, {\bf x_l})$ is a unit in $R$, simply by rescaling the basis.  In the present situation $(C,d)$ is  a complex of vector spaces, so this applies whenever $d({\bf x_k}, {\bf x_l})\ne 0$.

We will use the lemma in a filtered sense.  To make this precise, given a  complex  with increasing filtration as above, let $\Filt(a)\in \Z^{\ge0}$ denote the filtration level of a chain, i.e. 
$$\Filt(a)= \text{max}\{ i\in \Z^{\ge 0} \ | a\in F^i\},$$
and define $\Filt(a,b)=\Filt(a)-\Filt(b)$.  Note that  the filtration of a linear combination of chains satisfies $$\Filt(a+b)\ge \text{min}\{ \Filt(a),\Filt(b)\}.$$

\begin{lem}\label{lem:filteredcancel} With the notation from Lemma \ref{lem:cancel}, suppose that $(C,d)$ is a filtered complex,  that $d(\x_k,\x_l)=1$ and that
$$ \Filt(d(\x_k), \x_l)\ge 0.$$
Then the reduced complex $(C',d')$ inherits a filtration $\Filt'$ from $(C,d)$ by the formula $\Filt'(a):=\Filt(\iota (a))$ and the filtration degree of $d'$ is no less than that of $d$ in the sense that
$$ \Filt'(d'(a))\ge \Filt(d\circ\iota(a)),  \quad \text{for all\ } a\in C'.$$
Moreover, if $\Filt(\x_k,\x_l)=0$, then $(C,d)$ and $(C',d')$ are filtered chain homotopy equivalent.
\end{lem}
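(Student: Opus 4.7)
The plan is to derive all three conclusions by tracking filtration levels through the explicit formulas of Lemma~\ref{lem:cancel}, starting from one preliminary observation that extracts the real content of the hypothesis.

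First I would observe that the condition $\Filt(d(\x_k),\x_l)\ge 0$ is secretly an equality in disguise. Because $d$ preserves the filtration, $\Filt(d(\x_k))\ge \Filt(\x_k)$, and because $\x_l$ appears with nonzero coefficient in $d(\x_k)$, the minimum-over-summands characterization of $\Filt$ forces $\Filt(d(\x_k))\le \Filt(\x_l)$. The hypothesis gives the reverse inequality, so $\Filt(d(\x_k))=\Filt(\x_l)$; in particular $\x_l$ realizes the minimum filtration among the summands of $d(\x_k)$. This equality is the key fact that keeps the cancellation compatible with the filtration. The first assertion is then formal, since $\iota$ identifies $C'$ with the span of $\{\x_i : i\ne k,l\}\subset C$, and the rule $\Filt'(a):=\Filt(\iota(a))$ simply restricts the ambient filtration.

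For the second assertion I would analyze $d'=\pi\circ(d-dhd)\circ\iota$ termwise on $a\in C'$. Since $\pi$ only discards the $\x_k$ and $\x_l$ summands, one has $\Filt(\pi\circ d\circ\iota(a))\ge \Filt(d(\iota(a)))$. For the correction term, let $c$ denote the coefficient of $\x_l$ in $d(\iota(a))$; then $hd(\iota(a))=c\x_k$, so $dhd(\iota(a))=c\cdot d(\x_k)$. When $c\ne 0$, the summand $\x_l$ of $d(\iota(a))$ yields $\Filt(\x_l)\ge \Filt(d(\iota(a)))$, and the preliminary observation upgrades this to $\Filt(d(\x_k))\ge \Filt(d(\iota(a)))$. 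Combining the two pieces produces $\Filt'(d'(a))\ge \Filt(d\circ\iota(a))$, as required.

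For the third assertion, the extra hypothesis $\Filt(\x_k,\x_l)=0$ makes the homotopy operator $h$, which sends $\x_l\mapsto \x_k$ and all other basis vectors to $0$, filtration-preserving of degree $0$. Since $\iota$, $\pi$, $d$, and now $h$ are all filtered of degree $0$, the chain maps $f=\pi\circ(\mathbb{I}-d\circ h)$ and $g=(\mathbb{I}-h\circ d)\circ\iota$ of Lemma~\ref{lem:cancel} are filtered, as is the chain homotopy $h$ witnessing $g\circ f\sim \mathbb{I}_{C}$; the equality $f\circ g=\mathbb{I}_{C'}$ holds on the nose. I expect the only real obstacle to be this preliminary filtration bookkeeping: once $\Filt(d(\x_k))=\Filt(\x_l)$ has been extracted from the hypothesis, everything else follows mechanically from the formulas already in place.
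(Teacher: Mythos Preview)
Your proposal is correct and follows essentially the same approach as the paper: both arguments track filtration levels through the explicit formulas $d'=\pi\circ(d-dhd)\circ\iota$, $f=\pi\circ(\mathbb{I}-dh)$, $g=(\mathbb{I}-hd)\circ\iota$, using that $\pi$ can only raise filtration and that the correction term $dhd(\iota(a))=c\cdot d(\x_k)$ is controlled by the hypothesis. The one organizational difference is that you extract the equality $\Filt(d(\x_k))=\Filt(\x_l)$ up front, which lets you treat the two pieces of $d'$ uniformly, whereas the paper keeps only the inequality $\Filt(d(\x_k))\ge\Filt(\x_l)$ and runs a short two-case analysis on whether $\Filt(r\,d(\x_k))$ dominates $\Filt(d(\iota(a)))$; your route is marginally cleaner but relies on the (implicit) assumption that basis vectors are homogeneous for the decomposition so that $\Filt$ of a linear combination equals the minimum over its summands.
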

\begin{proof}
To prove that $\Filt'$ defines a filtration on the reduced complex we must show
$$ \Filt'(a,d'(a))\le 0 \quad \text{for all\ } a \in C',$$
which, by  definition, is the same as showing
\begin{equation}\label{eq:filteredreduction1} \Filt(\iota(a))\le \Filt(\iota(d'(a))=\Filt(\iota\circ\pi\circ(d-dhd)\circ\iota(a)).\end{equation}
To begin, observe that for any $x\in C$ we have $\Filt(\iota\circ\pi(x))\ge \Filt(x)$; that is, dropping $\x_k$ and $\x_l$ from a chain can only increase the filtration level.  Thus the right-hand side of \eqref{eq:filteredreduction1} satisfies
\begin{equation}\label{eq:filteredreduction2}\Filt(\iota\circ\pi\circ(d-dhd)\circ\iota(a))\ge \Filt((d-dhd)\circ\iota(a)).\end{equation}
Let $r\in R$ denote $d(\iota(a),\x_l)$, the coefficient of $\x_l$ in $d(\iota(a))$.  By the definition of $h$, the right hand of \eqref{eq:filteredreduction2} is equal to $\Filt(d(\iota(a))-r\cm d(\x_k))$, which satisfies the inequality:
\begin{equation}\label{eq:filteredreduction3}
\Filt(d(\iota(a))-r\cm d(\x_k))\ge \text{min}\{\Filt(d(\iota(a)),\Filt(rd(\x_k))\}
\end{equation}
Now if  $\Filt(rd(\x_k))\ge \Filt(d(\iota(a))$ then the right hand side of \eqref{eq:filteredreduction3} equals $\Filt(d(\iota(a))$, which satisfies $$\Filt(d(\iota(a))\ge \Filt(\iota(a))$$ because $(C,d)$ is filtered.  If  $\Filt(rd(\x_k))<\Filt(d(\iota(a))$, then $r\ne 0$ and the right side of \eqref{eq:filteredreduction3} equals $\Filt(rd(\x_k))$ which satisfies $\Filt(rd(\x_k))\ge \Filt(\x_l)$ by assumption.  In this case, though, we have $$\Filt(\x_l)\ge \Filt(\iota(a))$$ since $d(\iota(a),\x_l)\ne 0$ and $(C,d)$ is filtered.  Thus in both cases, we have verified \eqref{eq:filteredreduction1}. The statement about filtration degree is straightforward.

To see that $C$ and $C'$ are filtered homotopy equivalent when the filtration levels of the cancelled chains agree, it suffices to show that the chain maps $f$ and $g$ and chain homotopy $h$ from the previous lemma are filtered, in the sense that $$ \Filt' \circ f\ge  \Filt,  \quad  \Filt \circ g \ge \Filt', \quad \text{and} \quad \Filt \circ h\ge \Filt.$$ These are immediate from the definition of $\Filt'$ and the fact that $h$ preserves $\Filt$ (if the value of $h$ is nonzero) when $\Filt(\x_k,\x_l)=0$.
\end{proof}

We can use the lemmas to easily produce a spectral sequence.  To begin, define $(E_0, d_0)= (C,d)$.  Now use Lemma   \ref{lem:cancel} to cancel all of the non-zero terms in the differential of order zero i.e. the $d^{(0)}$ terms.  Lemma \ref{lem:filteredcancel} implies that the result is a {\em filtered} chain homotopy equivalent complex, $(E_1,d_1)$ for which the lowest order terms in the differential are of order one; that is, the differential can be written as $d_1=d_1^{(1)}+ d_1^{(2)} +\cdots$ with respect to the natural direct sum decomposition of $E_1$ induced by \eqref{eq:decomp}.    Now cancel  the $d_1^{(1)}$ components of $d_1$.   The result is a chain homotopy equivalent complex $(E_2,d_2)$ satisfying $d_2=d_2^{(2)}+d_2^{(3)}+\cdots$.    Assuming the complex is finitely generated in each homological degree, we can iterate this process until all differentials of all orders have been cancelled.   Denote  the lowest order term of $d_r$ by $$\delta_r:= d_r^{(r)}.$$ The fact that   $d_r\circ d_r=0$ implies  $\delta_r\circ \delta_r=0$, and  canceling $d_r^{(r)}$ is  equivalent to taking homology with respect to $\delta_r$.  We represent the process schematically:
$$
\begin{array}{cc} 
 E_0, d_0=d^{(0)}+d^{(1)}+d^{(2)}+d^{(3)}+\cdots  & \\
 \downarrow &  \text{Homology with respect to }\delta_0=d^{(0)}   \\
E_1, d_1= \ \ 0  \ + d_1^{(1)} +d_1^{(2)}+d_1^{(3)}+\cdots  & \\
\downarrow & \text{Homology with respect to }\delta_1=d_1^{(1)}\\
E_2, d_2= \ \ 0  \ +  \ \ 0  \  +d_2^{(2)}+d_2^{(3)}+\cdots  & \\
\downarrow & \text{Homology with respect to }\delta_2=d_2^{(2)}\\
\vdots & \\
E_\infty, d_\infty\equiv \ \ 0 \ \  \hskip1.3in & \\
\end{array}
$$
The resulting structure is a spectral sequence with $r$-th page given by $(E_r,\delta_r)$.  By construction, $E_\infty\cong H_*(C,d)$, since $(E_\infty,d_\infty)$ is chain homotopy equivalent to $(C,d)$.  The concerned reader can take comfort in the knowledge that the spectral sequence described here is isomorphic to that produced by the standard construction, \cite[Theorem 2.6]{McCleary}.  The proof  of equivalence is straightforward but rather notationally cumbersome, and since  our results make no use of it we leave it for the interested reader.

Let $C$ and $\overline{C}$ be complexes with filtrations induced by decompositions: $$ C=\underset{i\ge 0}\bigoplus\ C^{(i)}   \ \ \ \ \ \overline{C}= \underset{i\ge 0}\bigoplus\ {\overline{C}}^{(i)},$$ and let $a:C\rightarrow \overline{C}$ be a filtered chain map, i.e. $a(F^i)\subset \overline{F}^i$.   Such a map is well-known to induce a morphism between the associated spectral sequences; that is a sequence of chain maps:
$$  \alpha_r:  (E_r,\delta_r) \longrightarrow (\overline{E}_r,\overline{\delta}_r), \ \  \text{satisfying}  \ \  (\alpha_r)_*=\alpha_{r+1} $$
\noindent  A standard treatment of this construction can be found in  \cite[pgs. 65-67]{McCleary}.   The perspective of reduction offers a  concrete construction of this morphism as follows.   To begin, note  that a filtered chain map decomposes into homogeneous summands in a manner similar to the differential $$ a=a^{(0)}+a^{(1)}+a^{(2)}+\cdots,\ \ \text{where} \ \ a^{(m)}: C^{(i)}\rightarrow \overline{C}^{(i+m)}. $$

\noindent Recalling that $E_0=C$ and $\overline{E}_0=\overline{C}$, let $a_0:E_0\rightarrow \overline{E}_0$ be defined as $a_0:=a$.   Let $$g_r:(E_r,d_r)\rightarrow (E_{r+1},d_{r+1}) \ \ \text{and} \ \  \overline{g}_r:(\overline{E}_r,\overline{d}_r)\rightarrow (\overline{E}_{r+1},\overline{d}_{r+1})$$ denote the chain homotopy equivalences that cancel the $r$-th order terms in the differential, and inductively define  $a_{r+1}: (E_{r+1},d_{r+1})\rightarrow (\overline{E}_{r+1},\overline{d}_{r+1})$ by $$a_{r+1}=\overline{g}_r\circ a_r\circ (g_r)^{-1},$$ where $(g_r)^{-1}$ is the  chain homotopy inverse of $g_r$.  Now  $a_r$ decomposes into summands according to the filtration, and it is easy to see that each $a_r$ respects the filtration, i.e.  
$$  a_r=  a_r^{(0)}+ a_r^{(1)}+ a_r^{(2)}+\cdots.$$  Indeed, this holds by assumption for $a_0=a$, and the homotopy equivalences $(g_r)^{-1}$ and $\overline{g}_r$ provided by the cancellation lemma are all non-decreasing in filtration degree (they involve terms of the form $h\circ d_r$ and $\overline{d}_r\circ \overline{h}$,  respectively, where $h,\overline{h}$  have degree $-r$ and $d_r,\overline{d}_r$ have degree $r$.)  Let the $0$--th order term of $a_r$ be denoted by:
$$ \alpha_r:= a_r^{(0)}.$$
The $0$--th order terms of the equality $a_r\circ d_r=\overline{d}_r\circ a_r$ must themselves be equal, which implies  $\alpha_r \circ \delta_r= \overline{\delta}_r\circ \alpha_r$.   Thus $\alpha_r$ is a chain map between $(E_r,\delta_r)$ and $(\overline{E}_r,\overline{\delta}_r)$, and by the definition of the map induced on homology, we have $(\alpha_r)_*=\alpha_{r+1}$.  This provides the desired morphism of spectral sequences.  

It should be pointed out that  one can verify that the morphism constructed above agrees with the more traditional construction.  Since we will not make use of this fact we again omit the proof.   We should also point out that in many situations, one or all of the intermediate chain maps $a_r$  may have vanishing $0$--th order terms; that is $a_r^{(0)}=0$. The same rational which shows  $a_r^{(0)}=\alpha_r$ is a chain map shows that the lowest order non-vanishing term is a chain map.  This allows one to  construct a morphism of spectral sequences where the filtration  shift of the maps between successive pages is monotonically increasing with the page index.

\subsection{An action on the link surgeries spectral sequence}\label{subsec:lsss}

Let  $L=K_1\cup \cdots \cup K_l \subset Y$ be a framed link.  In this subsection we prove that a curve $\gamma\subset Y\setminus L$ gives rise to a filtered endomorphism of the filtered complex that produces the link surgeries spectral sequence.  To state the theorem, we establish a bit of notation.  A {\em multi-framing} is an $l-$tuple  $I=\{m_1,...,m_l\}\in \hat{\Z}^l$ where $\hat{\Z}=\Z\cup\infty$.  A multi-framing specifies a $3$-manifold, which we denote $Y(I)$, by performing  surgery on $L$ with slope on the $i$-th component  given by $m_i$ (defined with respect to the base framing). Thus $Y(\infty,...,\infty)=Y$ corresponds to not doing surgery at all.  

\begin{thm}\label{thm:ssnaturality}
Let  $L\subset Y$ be a framed link.  There is a  filtered complex $\subcube$ whose homology is isomorphic to $\HFa(Y)$. The $E_1$ page of the associated spectral sequence satisfies $$E_1\cong \bigoplus_{I\in \{0,1\}^{|L|}} \HFa(Y(I)).$$
  A curve $\zeta \subset Y\setminus L$ induces a filtered chain map
$$  \curve^\zeta\co C(L)\longrightarrow C(L),
$$
whose induced map is isomorphic to the action $$ (\curve^{\zeta})_*=\Curve^
{[\zeta]}\co \HFa(Y)\longrightarrow \HFa(Y).$$
 The induced map on the $E_1$ page of the spectral sequence  
is given by the sum 
$$
\begin{CD}
\underset{I}\bigoplus \ \HFa(Y(I)) @>\underset{I}\oplus\Curve_I^{[\zeta]} >> \underset{I}\bigoplus\ \HFa(Y(I)),
\end{CD}
$$
where $\Curve_I^{[\zeta]}$ is the map on $\HFa(Y(I))$ induced by $\zeta$, viewed as a curve in  $Y(I)$.
\end{thm}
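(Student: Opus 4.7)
The plan is to import Ozsv\'ath--Szab\'o's construction of the filtered complex $(C(L),d)$ from \cite[Theorem~4.1]{OSzBrCov} and to define $\curve^\zeta$ by the parallel recipe using the weighted polygon operators of Subsection~\ref{subsec:polygons}. Recall that $(C(L),d)$ arises from a Heegaard multi-diagram subordinate to $L$ carrying a distinct $g$-tuple $\etab^I$ of attaching curves for each multi-framing $I \in \{0,1\}^{|L|}$, together with a fixed choice of associative top-degree cycles $\Theta_{I,I'}$ representing the distinguished generators of $\HFa$ of the connected sums of $S^1\times S^2$ that arise between successive multi-framings $I<I'$. The underlying vector space is the direct sum over $I$ of the Floer generators for the pair $(\alphas,\etab^I)$, the filtration is by the weight $|I|=\sum_i I_i$, and $d$ is the sum, over ascending chains $I_0<I_1<\cdots<I_n$ of multi-framings, of pseudo-holomorphic polygon maps of the form $f_{I_0,\ldots,I_n}(-\otimes\Theta_{I_0,I_1}\otimes\cdots\otimes\Theta_{I_{n-1},I_n})$. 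After isotoping $\zeta$ onto $\Sigma$ in general position with respect to every attaching curve, I define $\curve^\zeta$ by the identical sum with each $f_{I_0,\ldots,I_n}$ replaced by the weighted operator $\curve^\zeta_{I_0,\ldots,I_n}$; the result is by inspection non-decreasing with respect to the filtration by $|I|$.

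The relation $d\circ\curve^\zeta+\curve^\zeta\circ d=0$ reduces, in each pair of filtration levels $(I,I')$, to a sum, over ascending chains from $I$ to $I'$, of the terms appearing in Theorem~\ref{thm:AinfinityH1} applied to the multi-diagram $(\Sigma,\alphas,\etab^{I_0},\ldots,\etab^{I_n})$. The terms from that identity involving an inner composition $f$ acting only on $\Theta$'s disappear by the associativity of the choice of $\Theta$'s together with the standard Heegaard Floer $A_\infty$-relations of \cite[Lemma~4.6]{OSzBrCov}; the remaining terms are exactly those of $d\circ\curve^\zeta+\curve^\zeta\circ d$, and they cancel in pairs. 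Hence $\curve^\zeta$ is a filtered chain map on $C(L)$.

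It remains to identify the induced maps. On the $E_1$ page, only the filtration-preserving summand of $\curve^\zeta$ survives, namely $\bigoplus_I\curve^\zeta_{I,I}$ acting on $\bigoplus_I\CFa(\alphas,\etab^I)$; on homology this is by definition $\bigoplus_I\Curve_I^{[\zeta]}$. To identify $(\curve^\zeta)_*$ with $\Curve^{[\zeta]}$ on $H_*(C(L))\cong\HFa(Y)$, one uses that the identifying quasi-isomorphism of \cite[Theorem~4.1]{OSzBrCov} is constructed by iterating handle-attachment cobordism maps; since $\zeta$ lies in $Y\setminus L$, it is homologous, in each of these cobordisms, to the curve it induces on the surgered manifold, and Theorem~\ref{thm:Cob} then lets one commute $\curve^\zeta$ past each cobordism map up to chain homotopy. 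The main obstacle is the bookkeeping in this last step: one must verify that the chain-level homotopies furnished by the appropriate version of Theorem~\ref{thm:AinfinityH1} assemble into a single chain homotopy identifying $\curve^\zeta$ on $C(L)$ with the pullback of $\Curve^{[\zeta]}$ on $\HFa(Y)$ under the identifying quasi-isomorphism, which is a notationally involved but routine elaboration of the proof of Theorem~\ref{thm:AinfinityH1}.
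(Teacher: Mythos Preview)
Your construction of $\curve^\zeta$ and the verification that it is a filtered chain map are essentially the paper's argument: one defines $\curve^\zeta$ by replacing each polygon map $f_{I_0,\ldots,I_n}$ in $d$ by its weighted analogue and then invokes Theorem~\ref{thm:AinfinityH1} together with the vanishing of the $\Theta$-only terms (this is \cite[Lemma~4.3]{OSzBrCov}, not Lemma~4.6). Your identification of the induced map on $E_1$ is likewise correct and matches the paper.

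The gap is in the final step. The quasi-isomorphism $H_*(C(L))\cong\HFa(Y)$ from \cite[Theorem~4.1]{OSzBrCov} is \emph{not} built by iterating handle-attachment cobordism maps, so Theorem~\ref{thm:Cob} is not the right tool and the ``routine bookkeeping'' you allude to does not straightforwardly exist. The isomorphism is the connecting homomorphism of an iterated short exact sequence: for a single knot one has
\[
0\longrightarrow \CFa(Y)=X(\{\infty\})\longrightarrow X(\{0,1,\infty\})\longrightarrow X(\{0,1\})\longrightarrow 0,
\]
with acyclic middle term, and the connecting map involves both $D_{1<\infty}$ (a cobordism map) and the higher term $D_{0<1<\infty}$. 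To see that $(\curve^\zeta)_*$ matches $\Curve^{[\zeta]}$ under this identification, the paper extends $\curve^\zeta$ to a chain map on the \emph{larger} complex $X(\{0,1,\infty\})$ (and its multi-framing analogues) that respects this short exact sequence; naturality of the long exact sequence then forces the connecting isomorphism to intertwine $(\curve^\zeta)_*$ with $\Curve^{[\zeta]}$, and one inducts on $|L|$. The essential point you are missing is that $\curve^\zeta$ must be defined on the multi-framings containing $\infty$ as well, so that compatibility with the connecting map is automatic from Lemma~\ref{lem:AChain} rather than something to be assembled from homotopies.
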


\begin{proof} The existence of the filtered complex computing  $\HFa(Y)$ is \cite[Theorem 4.1]{OSzBrCov}.  The refinement here is the existence of a filtered chain map associated to a curve in the  framed link diagram.   This map will be defined as a sum of the holomorphic polygon operators $\curve^\zeta_{0,1,...,n}$ from Subsection \ref{subsec:polygons}.  In order to make this precise, we  must recall the proof of \ons's theorem. There are essentially two main steps.  The first is to  construct a filtered chain complex $(X,D)$ from a framed link $L\subset Y$, and the second is to show that a natural (filtered) subcomplex $\subcube$ has homology isomorphic to $\HFa(Y)$.  In both steps, we will take care to introduce our refinement at the appropriate time.

The first step in the proof is to associate a  filtered complex to a Heegaard multi-diagram adapted to a framed link  $L=K_1\cup ... \cup K_l \subset Y$.   More precisely, we have a Heegaard multi-diagram $$(\Sigma, \etab^0,\etab^1,...,\etab^{k},w)$$ with  the property that the $3$--manifolds $Y_{0,i}$  ($i=1,...,k$) are in one-to-one correspondence with the $3$--manifolds $Y(I)$ associated to all possible multi-framings $I=\{m_1,...,m_l\}$, with $m_i\in \{0,1,\infty\}$.  The $3$--manifolds $Y_{i,j}$ for $i,j>0$ are all diffeomorphic to the connected sum of a number of  $S^1\times S^2$'s.  As usual, we require the multi-diagram to be  admissible.

 From such a Heegaard diagram one constructs a filtered complex $(X,D)$.  As a group, the complex splits as a direct sum over the set of  multiframings:$$X= \bigoplus_{I\in \{0,1,\infty\}^l}  \CFa(Y(I)).$$
The differential on X is given as a sum of holomorphic polygon maps associated to certain sub multi-diagrams.  Call a multi-framing $J$  an {\em immediate successor} of a multi-framing $I$ if the two framings differ on exactly one component  $K_i\subset L$ and if the restriction of $J$ to $K_i$ is one greater than the restriction of $I$, taken with respect to the length two ordering $0<1<\infty$.  Write $I<J$ if $J$ is an immediate successor of $I$.   Given any sequence of immediate successors $I^0<I^1<\dots<I^m$, there is a  map $$D_{I^0<\cdots<I^m}\co \CFa(Y(I^0))\longrightarrow \CFa(Y(I^m))$$ defined by the pseudo-holomorphic polygon map of Subsection~\ref{subsec:polygons}
$$ D_{I^0<\dots<I^m}(-):=f_{0,i_0,\dots,i_m}(-\otimes\Theta_1\otimes\dots\otimes\Theta_{m}).$$
The indexing   $(0,i_0,\dots,i_m)$ which specifies the sub multi-diagram $$(\Sigma,\etab^0,\etab^{i_0},\dots,\etab^{i_m},w)$$  is determined by the requirement that $Y_{0,i_j}= Y(I^j)$ for each $j=0,1,\dots,m$,  under the  bijection between the $3$-manifolds  $Y_{0,i}$ and those associated to the multi-framings. Here and throughout, $\Theta_j$ is a cycle generating the highest graded component of  $\HFa_*(Y_{i_{j-1},i_{j}})$.  For a sequence of successors of length zero, $D_I\co \CFa(Y(I))\rightarrow \CFa(Y(I))$ is simply the Floer differential.

\ons \ define an endomorphism  $D\co X\rightarrow X$ by $$D= \sum_{\{I^0<I^1<\cdots<I^m\}} D_{I^0<I^1<\cdots<I^m},$$
where the sum is over all sequences of immediate successors (of all lengths).  They show that $D\circ D=0$  \cite[Proposition 4.4]{OSzBrCov}.  The key tool  in their proof is the $A_\infty$ relation for pseudo-holomorphic polygon maps, together with a vanishing theorem for these maps in the present context, \cite[Lemma 4.2]{OSzBrCov}.

The resulting complex $(X,D)$ has a natural filtration by the totally ordered set $\{0,1,\infty\}^l$  which arises from its defining decomposition along multiframings.  We can also collapse this to a $\Z$-filtration, and it is this latter filtration which will be of primary interest.  To do this,  let the norm of a multi-framing $I=(m_1,...,m_l)$ be given by  $$|I|= \sum_{i=1}^{l} m'_i,$$ where $m'_i=m_i$ if $m_i\in \{0,1\}$ and $m'_i=2$ if $m_i=\infty$. Then $$X= \bigoplus_{i\ge0 } \ C^{(i)},$$
where $$C^{(i)}=\bigoplus_{\{I\in \{0,1,\infty\}^l  | \  |I|=i\}} \CFa(Y(I)).$$
The differential $D$ clearly respects the decomposition, giving rise to a decreasing filtration of $X$ by sub complexes  $F^p=\oplus_{i\ge p} C^{(i)}$, as in the previous subsection.   The $0$--th page of the spectral sequence is given by $$E_0= \bigoplus_{I\subset \{0,1,\infty\}^l} \CFa(Y(I)),$$ with $\delta_0$ differential given simply as the Floer differential; that is, $\delta_0|_{\CFa(Y(I))}=D_I$.  The $E_1$ page of the spectral sequence splits as $$E_1\cong  \bigoplus_{I} \HFa(Y(I)),$$
with  $\delta_1|_{\HFa(Y(I))}$  given by  $\sum (D_{I<J})_*$, the sum over all immediate successors of $I$, of the maps induced on homology by $D_{I<J}$.  These are   the maps $$\widehat{F}_W\co\HFa(Y(I))\rightarrow \HFa(Y(J))$$ associated to the $2$--handle cobordism between $Y(I)$ and $Y(J)$ (here we are using the fact that $J$ is an immediate successor of $I$).

We are now in a position to understand the action of a curve $\zeta \subset Y\setminus L$.  Without loss of generality, we may assume that $\zeta$ lies on the Heegaard surface, and is in general position with all attaching curves.  Define maps
$$\curve^\zeta_{I^0<\cdots<I^m}\co \CFa(Y(I^0))\rightarrow \CFa(Y(I^m))$$ by the polygon operators from Subsection~\ref{subsec:polygons}, $$\curve^\zeta_{I^0<\cdots<I^m}(-):=\curve^\zeta_{0,i_0,...,i_m}(-\otimes\Theta_1\otimes\cdots\otimes\Theta_{m}).$$ 
The sum of all these maps (over all sequences of immediate successors) is an endomorphism $\curve^\zeta\co X\rightarrow X$, which the following lemma shows is a chain map.  This map  forms the basis of our refinement.

\begin{lem}\label{lem:AChain}
The map $\curve^\zeta: X\rightarrow X$ given by $$\curve^\zeta= \sum_{\{I^0<I^1<\dots<I^m\}} \curve^\zeta_{I^0<I^1<\dots<I^m}$$
is a  chain map.  Moreover, $\curve^\zeta$  respects the collapsed filtration $F^i$ and the  filtration of $(X,D)$ by the totally ordered set $\{0,1,\infty\}^l$.
\end{lem}
\begin{proof}
We want to prove that $D\circ \curve^{\zeta}+\curve^{\zeta}\circ D=0$. The proof is analogous to \cite[Proposition~4.4]{OSzBrCov}. Given $I,J$ such that $|I|<|J|$, we expand the component map
$$D\circ \curve^{\zeta}+\curve^{\zeta}\circ D\co \widehat{CF}(Y(I))\to\widehat{CF}(Y(J))$$
to get 
\begin{equation} \label{eq:achainmap1}
\sum_{I=I^0<\cdots<I^m=J}\sum_{0\le r\le m}(D_{I^{r}<\cdots<I^m}\circ \curve^{\zeta}_{I^0<\cdots<I^r}+\curve^{\zeta}_{I^{r}<\cdots<I^m}\circ D_{I^0<\cdots<I^r}),
\end{equation}
where the first sum is over all sequences of immediate successors connecting $I$ to $J$.  Such sequences must be of length $m=|J|-|I|$.  Pick one of these sequences $I^0<\cdots< I^m$, and consider the term on the left  in the second summation, applied to a chain $\x$
$$\sum_{0\le r\le m}D_{I^{r}<\cdots<I^m}\circ \curve^{\zeta}_{I^0<\cdots<I^r}(\x):=\quad \quad \quad \quad \quad \quad\quad \quad \quad\quad \quad \quad \quad \quad \quad\quad \quad \quad$$
$$\quad \quad \quad\sum_{0\le r\le m} f_{0,i_r,\dots,i_m}(\curve^\zeta_{0,i_0,\dots,i_r}(\x\otimes \Theta_1\otimes \dots \otimes \Theta_{r})\otimes \Theta_{r+1}\otimes \dots \otimes \Theta_{m}),$$ where the indices $i_n$ refer to the sets of attaching curves for which $Y_{0,i_n}\simeq Y(I^n)$, as above.  Theorem~\ref{thm:AinfinityH1} indicates that this  is equal to
\begin{eqnarray*}
&&\sum_{0\le r<s\le m} \curve^\zeta_{0,i_0,\dots,i_r,i_s,\dots,i_m} (\x\otimes \dots\otimes f_{i_r,\dots,i_s}(\Theta_{r+1}\otimes ... \otimes \Theta_{s})\otimes...\otimes \Theta_{m})\\
&&+\sum_{0\le s\le m} \curve^\zeta_{0,i_s,\dots,i_m}( f_{0,i_0,\dots,i_s}(\x\otimes \dots \otimes \Theta_{s})\otimes\Theta_{s+1}\otimes\dots \otimes \Theta_{m}).
\end{eqnarray*}
But the second term in the above sum is  equal to, and hence cancels over $\F$, the   $\curve^\zeta\circ D$ term
  from the inner sum in \eqref{eq:achainmap1}. Thus   \eqref{eq:achainmap1}  becomes
 $$\sum_{I^0<\dots <I^m}\sum_{0\le r<s\le m} \curve^\zeta_{0,i_0,\dots,i_r,i_s,\dots,i_m} (\x\otimes \dots\otimes f_{i_r,\dots,i_s}(\Theta_{r+1}\otimes ... \otimes \Theta_{s})\otimes...\otimes \Theta_{m}),$$

This expression can be rewritten as
$$
\sum_{\begin{array}{c}
\scriptstyle
I'=:I^r\\
\scriptstyle J'=:I^s
\end{array}
} 
\!\!\!\!\! \sum_{
\begin{array}{c}
\scriptstyle
I^0<\cdots<I^r\\
\scriptstyle I^s<\cdots<I^m
\end{array}
}
\!\! \!\!\!\!\!\!\! \!\! \curve^{\zeta}_{0,i_0,..,i_r,i_s,..,i_m}
\big(\x\otimes..\otimes \!\!\!\!\! \!\!\!\!\! \sum_{
\begin{array}{c}
\scriptstyle
I^r<\cdots<I^s\\
\scriptstyle 
\end{array}
}
\!\!\!\!\! \!\!\!\!\!  f_{i_r,..,i_s}(\Theta_{r+1}\otimes..\otimes\Theta_{s})\otimes..\otimes\Theta_{m}\big)\\
$$
where $I'$ and $J'$ in the first sum range over all  multi-framings between $I$ and $J$ (which we subsequently relabel $I'=I^r$ and $J'=I^s$).  The second sum ranges over all sequences of successors connecting $I$ to $I'$ and $J'$ to $J$. The third sum ranges over all sequences of successors connecting $I'$ to $J'$.  However,  \cite[Lemma~4.3]{OSzBrCov}  states that $\sum_{I^r<\dots< I^s} f_{i_r,..,i_s}(\Theta_{r+1}\otimes..\otimes\Theta_{s})\equiv 0,$ for any $I^r$ and $I^s$.  This shows that $\curve^\zeta$ is a chain map.  It clearly respects the filtration $F^i$ (since it splits into homogeneous summands by the length of the sequence of successors) and the subcomplexes of the   filtration of $(X,D)$ by the totally ordered set $\{0,1,\infty\}^l$.
\end{proof}

The second step in the proof of \os's theorem is to show that $(X,D)$ has a natural quotient complex $\subcube$ whose homology is isomorphic to $\HFa(Y)$.  Given a subset of multiframings $S\subset \{0,1,\infty\}^l$,  let $X(S)$ denote the subgroup of $X$ given by $$X(S)=\bigoplus_{I\in S}\ \CFa(Y(I)).$$ Let $\subcube=X(\{0,1\}^l)$ be the group generated by all multiframings which do not contain $\infty$.  This is clearly a quotient complex, with associated subcomplex generated by those multiframings which contain $\infty$.  The complex $X(\{0,1\}^l)$ inherits the  filtration from $X$, and  the curve map $\curve^\zeta$  induces a filtered  chain map from  $X(\{0,1\}^l)$ to itself.   \ons \ show that the homology of $\subcube$ is isomorphic to $\HFa(Y)$, and our task is to show that their isomorphism fits into the following diagram 
$$
\begin{CD}
 \HFa(Y) @>\cong >>  H_*\subcube\\
@VV\Curve^{[\zeta]}V @VV(\curve^{\zeta})_*V\\
  \HFa(Y)@>\cong >>H_*\subcube .
\end{CD} 
$$

The key ingredient is a refined version of the strong form of the surgery exact triangle, \cite[Theorem 4.5]{OSzBrCov}.     Let $K$ be a framed knot in a $3$--manifold $Y$, and let $$f=D_{0<1}\co\CFa(Y(0))\longrightarrow \CFa(Y(1))$$ be the map induced by the $2$--handle cobordism.  Ozsv\'ath--Szab\'o \cite[Theorem 4.5]{OSzBrCov}  show that the mapping cone complex $M(f)$ is quasi-isomorphic to $\CFa(Y)$. We must account for the $\HFmod$ action.  To do this let $\zeta\subset Y\setminus K$ be a curve, as usual.   Consider the complex $$X(\{0,1,\infty\})=\CFa(Y(0))\oplus \CFa(Y(1))\oplus \CFa(Y(\infty)),$$ endowed with the differential \
$$D=\left( \begin{array}{ccc}
D_0 & 0 & 0 \\
D_{0<1} & D_1 & 0 \\
D_{0<1<\infty} & D_{1<\infty} & D_\infty \end{array} \right). $$

\noindent There is a natural short exact sequence
\begin{equation}\label{eq:mappingses} 0\rightarrow X(\{\infty\})\rightarrow X(\{0,1,\infty\})\rightarrow X(\{0,1\})\rightarrow0 ,\end{equation}
where the sub and quotient complexes are identified with $\CFa(Y)$ and $M(f)$, respectively.  \ons \ show that $X(\{0,1,\infty\})$ is acyclic \cite[Lemma~4.2]{OSzBrCov},  proving that $M(f)$ is quasi-isomorphic to $\CFa(Y)$.

We have the map
$$  \curve^{\zeta}: X(\{0,1,\infty\})\longrightarrow X(\{0,1,\infty\})$$ 
given by 
$$\curve^\zeta=\left( \begin{array}{ccc}
\curve^\zeta_0 & 0 & 0 \\
\curve^\zeta_{0<1} & \curve^\zeta_1 & 0 \\
\curve^\zeta_{0<1<\infty} & \curve^\zeta_{1<\infty} & \curve^\zeta_\infty \end{array} \right),$$ which Lemma~\ref{lem:AChain} shows is a chain map.  Moreover, $\curve^\zeta$  respects the short exact sequence \eqref{eq:mappingses}, thus inducing a map on the sub and quotient complex.   The map on the subcomplex is simply $\curve^\zeta_\infty\co\CFa(Y_\infty)\rightarrow\CFa(Y_\infty)$ and the map on the quotient complex is 
$$ \curve^\zeta_{M(f)} = \begin{pmatrix} \curve^\zeta_0 & 0 \\ \curve^\zeta_{0<1} & \curve^\zeta_1 \end{pmatrix}\co M(f)\rightarrow M(f).$$
Considering the corresponding long exact sequence in homology, we obtain a commutative diagram:
$$
\begin{CD}
 ... @>>> H_*(M(f)) @>>> H_*(X(\{0,1,\infty\})@>>>  \HFa_*(Y_\infty) @>>>...\\
@. @VV{(\curve^\zeta_{M(f)})_*}V @VV{(\curve^\zeta)_*}V  @VV{(\curve^\zeta_\infty)_*}V @. \\
 ... @>>> H_*(M(f)) @>>> H_*(X(\{0,1,\infty\})@>>>  \HFa_*(Y_\infty) @>>>...,
\end{CD} 
$$
By \ons's theorem, $H_*(X(\{0,1,\infty\})=0$, showing that 
$$
\begin{CD}
 \HFa_{*+1}(Y)@>\cong >> H_*(M(f)) \\
 @VV{(\curve^{\zeta})_{*+1}}V  @VV{(\curve^\zeta_{M(f)})_*}V \\
  \HFa_{*+1}(Y)@>\cong >> H_*(M(f)). 
\end{CD} 
$$ \noindent This can be interpreted as saying that the Floer homology of $Y$ is isomorphic to the mapping cone of the $2$--handle map, {\em as a module} over $\HFmod$.

Given this refinement of the surgery exact triangle, the proof of  Theorem~\ref{thm:ssnaturality} proceeds quickly by the same inductive argument used in the proof of \cite[Theorem 4.1]{OSzBrCov}.  Specifically, we return to the  complex $X$ coming from a framed link diagram of $l$ components.  If $l=1$, the preceding discussion proves the theorem, showing that the filtered complex $X(\{0,1\})$ computes the homology of $X(\infty)=\CFa(Y)$, and that a curve $\zeta\subset Y\setminus K$ induces a filtered chain map whose induced map agrees with that of $\curve^{\zeta}\co\CFa(Y)\rightarrow \CFa(Y)$.  Assume that this remains true for the complex $X(\{0,1\}^{l-1})$ associated to an $(l-1)$--component link and the induced map $\curve^\zeta$.  That is, we have a commutative diagram: 
$$\begin{CD}
 \HFa_{*+1}(Y)@>\cong >> H_*(X(\{0,1\}^{l-1})) \\
 @VV{(\curve^{\zeta})_{*+1}}V  @VV{(\curve^\zeta)_*}V \\
  \HFa_{*+1}(Y)@>\cong >> H_*(X(\{0,1\}^{l-1})). 
\end{CD} 
$$

Turn now to an $l$--component link.  In this case, we consider the complex $X(\{0,1\}^{l-1}\times \{0,1,\infty\})$ (that this is a complex follows from the fact that it is a quotient  of $X$ by the subcomplex  consisting of multiframings with at least one of the first $l-1$ parameters equals to $\infty$) .  There is the short exact sequence, compatible with the maps induced by $\curve^\zeta$:
$$\minCDarrowwidth{15pt} 
\begin{CD}
 0 @>>> X(\{0,1\}^{l-1}\times\{\infty\})@>>>X(\{0,1\}^{l-1}\times \{0,1,\infty\})@>>>  X(\{0,1\}^l)  @>>>0\\
@. @VV{\curve^\zeta}V @VV{\curve^\zeta}V  @VV{\curve^\zeta}V @. \\
0 @>>> X(\{0,1\}^{l-1}\times\{\infty\})@>>>X(\{0,1\}^{l-1}\times \{0,1,\infty\})@>>>  X(\{0,1\}^l)  @>>>0.
\end{CD}
$$
The middle term has a natural filtration coming from the total ordering on $\{0,1\}^{l-1}$.  The associated graded groups of this filtration are each isomorphic to $H_*(X(I\times \{0,1,\infty\}))$ for some multiframing $I\subset \{0,1\}^{l-1}$.  The strong form of the surgery exact triangle, however, implies that these groups are all zero. It follows that $H_*(X(\{0,1\}^{l-1}\times \{0,1,\infty\}))=0$.  Thus we have the diagram 

$$
\begin{CD}
  H_{*+1}(X(\{0,1\}^{l-1}\times\{\infty\}))@>\cong >> H_*(X(\{0,1\}^{l})) \\
 @VV{(\curve^{\zeta})_{*+1}}V  @VV{(\curve^\zeta)_*}V \\
  H_{*+1}(X(\{0,1\}^{l-1}\times\{\infty\}))@>\cong >> H_*(X(\{0,1\}^{l})). 
\end{CD} 
$$
Our inductive hypothesis equates the left hand side with $\Curve^{[\zeta]}\co\HFa(Y)\rightarrow \HFa(Y)$. This completes the proof that  the map induced on homology by $\curve^\zeta$ is isomorphic to $A^{[\zeta]}$.

To see that the  induced  map  $\alpha_1^{\zeta}$  on the $E_1$ page  is given by 
$$
\begin{CD}
\underset{I}\bigoplus \ \HFa(Y(I)) @>\underset{I}\oplus\Curve_I^{[\zeta]} >> \underset{I}\bigoplus\ \HFa(Y(I)),
\end{CD}
$$
it suffices to recall that   $\alpha_1^{\zeta}$ is the induced map on the  homology of $(E_0,\delta_0)$ by $\alpha_0^{\zeta}$, where $\alpha^\zeta_0$ is the map on the $E_0$ page induced by $\curve^{\zeta}$.   But $\alpha^\zeta_0$, in turn, is simply the lowest order term of $\curve^{\zeta}$ and is given by 
$$
\begin{CD}
\underset{I}\bigoplus \ \CFa(Y(I)) @>\underset{I}\oplus\curve_I^{\zeta} >> \underset{I}\bigoplus\ \CFa(Y(I)),
\end{CD}
$$
where $\curve_I^\zeta:\CFa(Y(I))\rightarrow \CFa(Y(I))$ is  the  operator obtained by viewing $\zeta$ as a curve in $Y(I))$.   By definition, we have $A_I^{[\zeta]}=(\curve_I^\zeta)_*$. 
\end{proof}

\subsection{Connecting the Khovanov module to the Floer module}\label{subsec:khss}

As discussed in Section \ref{sec:Kh}, a marked point $p_0$ on one component of a link diagram gives rise to the reduced Khovanov chain complex, $\CKr$.  Choosing a marked point  on each remaining component gives the reduced Khovanov homology an $\modred$--module structure.    More precisely, additional marked points give rise to chain maps $$x_i\co \CKr(L)\rightarrow \CKr(L), \ \ i=1,...,n-1$$ which satisfy $x_i\circ x_j=x_j\circ x_i$ and $x_i\circ x_i=0$.  

Consider a properly embedded arc $t_i\subset (S^3,L)$ connecting  $p_0$ to the  $i$--th additional marked point.  The preimage of this arc in the branched double cover is a closed curve $\zeta_i= \pi^{-1}(t_i)\subset \Sigma(L)$.  As in Subsection \ref{subsec:H_1},  we can assume that $\zeta_i$ lies on the Heegaard surface of a Heegaard diagram of $\Sigma(L)$, and hence we obtain a chain map on the associated Floer complex:
$$  \curve^{\zeta_i}\co  \CFa(\Sigma(L))\longrightarrow \CFa(\Sigma(L)).$$

\noindent This chain map is related to the chain map $x_i$ on the Khovanov complex by the following theorem.

\begin{thm}\label{thm:Khnaturality} 
Let $D$ be a diagram for a link $L=K_0\cup...\cup K_{n-1}\subset S^3$, together with a base point $p_0\subset K_0$.   There is a filtered chain complex, $(C(D),d)$, whose homology is isomorphic to $\widehat{HF}(\Sigma(L))$.  The associated spectral sequence satisfies $$(E_1, \delta_1)\cong (\CKr(\overline{D}),\partial)$$
where $\overline{D}$ is the mirror of $D$ and the reduced Khovanov complex is defined with  $p_0$.   Let  $t_i$ be a proper arc connecting $p_0\subset K_0$ to $p_i\subset K_i$, and $\zeta_i= \pi^{-1}(t_i)\subset \Sigma(L)$ its lift to the branched double cover.  Then there is a filtered chain map  
$$  \curve^{\zeta_i}\co   (C(D),d)\longrightarrow (C(D),d),$$ whose induced map on homology  satisfies $$(\curve^{\zeta_i})_*=\Curve^{[\zeta_i]}\co\HFa(\Sigma(L))\rightarrow \HFa(\Sigma(L)).$$ The induced map $\alpha_1^{\zeta_i}$ on the $E_1$ page of the spectral sequence satisfies

$$\begin{CD}
 (E_1,\delta_1) @>\alpha_1^{\zeta_i} >> (E_1,\delta_1) \\
 @VV{\cong}V  @VV{\cong}V \\
(\CKr(\overline{D}),\partial)@>x_i >> (\CKr(\overline{D}),\partial). 
\end{CD} 
$$

 \end{thm}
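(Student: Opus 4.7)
The plan is to realize $(C(D),d)$ as the filtered complex produced by Theorem~\ref{thm:ssnaturality} applied to the standard framed link presentation of $\Sigma(L)$ used in \cite{OSzBrCov}. Namely, one takes $L' \subset S^3$ to be the framed link consisting of a small circle around each crossing of $D$; surgering on $L'$ with slopes in $\{0,1\}$ on each component yields the branched double covers of the corresponding complete resolutions, while the $\infty$-framing at every component gives back $\Sigma(L)$. At a complete resolution $I$ the Khovanov circles form an unlink of $n(I)$ components, whose branched double cover is $\#^{n(I)-1}(S^1 \times S^2)$. The identification of the resulting $E_1$ page with the Khovanov complex of $\overline{D}$ is the content of \cite{OSzBrCov}; the passage to the \emph{reduced} complex is effected by using the marked point $p_0$ to cancel the tensor factor associated to the Khovanov circle containing $p_0$, via the K\"unneth splitting (\ref{eq:ConnSum}).

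With this setup, the first two assertions follow essentially immediately from Theorem~\ref{thm:ssnaturality}: after isotoping $\zeta_i$ so that it lies on the Heegaard surface in the complement of the attaching curves coming from $L'$, it induces a filtered chain map $\curve^{\zeta_i}\co C(D)\to C(D)$ whose homological effect is $\Curve^{[\zeta_i]}$. Moreover, the induced map on $E_1$ decomposes as $\bigoplus_I A_I^{[\zeta_i]}$, where $A_I^{[\zeta_i]}$ is the action on $\widehat{HF}(\#^{n(I)-1}(S^1\times S^2))$ of the class of $\zeta_i$ viewed in the $I$th resolution. One then checks this map descends to the reduced quotient, since $\zeta_i$ is chosen to have trivial intersection with the Heegaard data used to carry out the $p_0$-reduction.

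The principal remaining task—and the main technical hurdle—is the identification of $\bigoplus_I A_I^{[\zeta_i]}$ with the Khovanov chain map $x_i$. Under the isomorphism $\widehat{HF}(\#^{n(I)-1}(S^1\times S^2))\cong \Lambda^*(H^1(-;\F))$ recalled in Subsection~\ref{subsec:H_1}, the action of $[\zeta_i]$ is contraction $\iota_{[\zeta_i]}$. The key geometric observation is that for $t_i$ an embedded arc in $(S^3, L)$ from $p_0$ to $p_i$, the lift $\pi^{-1}(t_i)$ represents precisely the $H_1$-generator dual to the $S^1\times S^2$ summand attached to the Khovanov circle containing $p_i$ (and is null-homologous if $p_0$ and $p_i$ happen to lie on the same resolution circle, in which case the Khovanov map $x_i$ also vanishes at that vertex). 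Under the standard identification of $\alg = \F[X]/(X^2)$ with $\Lambda^*(H^1(S^1\times S^2;\F))$ sending $\mathbf{1}\leftrightarrow \theta$ and $X \leftrightarrow 1$, the contraction $\iota_{[\zeta_i]}$ on the appropriate tensor factor becomes multiplication by $X$, which is exactly the chain-level definition of $x_i$ at that vertex. Matching this vertex-by-vertex and checking compatibility with the K\"unneth identifications across edges of the resolution cube yields the desired commutative square.

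The main obstacle is this final chain-level identification: one must verify not only that the actions agree at each vertex of the cube, but that the identifications are canonical enough to be compatible across the whole $E_1$ page. This compatibility, however, is forced by the naturality of (\ref{eq:ConnSum}) under the cobordisms that define the edges of the hypercube together with Theorem~\ref{thm:Cob}, which ensures that the Floer-theoretic action commutes with each edge map.
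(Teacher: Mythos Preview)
Your approach is essentially the paper's: invoke Theorem~\ref{thm:ssnaturality} for the standard surgery presentation of $\Sigma(L)$ arising from the crossings of $D$, then match the vertex-wise $H_1$-action on $\widehat{HF}(\#^k S^1\times S^2)\cong\Lambda^*H^1$ with the Khovanov basepoint map via the arc-lift description of a basis of $H_1$. Two small remarks: the framed link $L'$ lives in $\Sigma(L)$, not in $S^3$ (as your own claim that $\infty$-surgery returns $\Sigma(L)$ makes clear), and your closing paragraph on edge-compatibility is superfluous---once the vertex-by-vertex isomorphism $E_1\cong\CKr(\overline D)$ is fixed and both $\alpha_1^{\zeta_i}$ and $x_i$ are known to be chain maps for $\delta_1$, agreement at each vertex suffices.
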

 \begin{proof}
 The first part of the theorem is the content of \cite[Theorem 1.1]{OSzBrCov}, which is a rather immediate consequence of the link surgeries spectral sequence, applied to a particular surgery presentation of $\Sigma(L)$ coming from the link diagram.  The key point is that the branched double covers of links which differ by the  unoriented skein relation differ by a triad of surgeries along a framed knot in that manifold.  Moreover, the branched double cover of the natural saddle cobordism passing between the zero and one resolution is the $2$--handle cobordism  between the branched double covers which appears in the surgery exact triangle. The branched double cover of a complete resolution is diffeomorphic to $\#^{k}S^1\times S^2$, where $k+1$ is the number of components of the resolution, and the Heegaard Floer homology of $\#^{k}S^1\times S^2$ is isomorphic to the reduced Khovanov homology of the complete resolution.  Moreover, the maps  between the  Floer homology of the connected sums of $S^1\times S^2$'s induced by the  $2$--handle cobordisms  agree with the Frobenius algebra  defining the reduced Khovanov differential.
 
  In the present situation, we wish to keep track of an action. In the case of Khovanov homology it is the action $x_i$ induced by a point $p_i\in K_i$, whereas in the Floer setting it is the action of the curve $\zeta_i=\pi^{-1}(t_i)$ arising as the lift of an arc $t_i$ connecting $p_0$ to $p_i$.  Theorem~\ref{thm:ssnaturality}        shows that $\zeta_i$ induces a filtered chain map $$\curve^{\zeta_i}\co X(\{0,1\}^l)\rightarrow X(\{0,1\}^l),$$ whose induced map on homology agrees with $\Curve^{[\zeta_i]}\co \HFa(\Sigma(L))\rightarrow \HFa(\Sigma(L))$.  Thus it suffices to see that the induced map on the $E_1$ page of the spectral sequence agrees with the Khovanov action.   This follows from the fact that the induced map $\alpha_1^{\zeta_i}: (E_1,\delta_1)\rightarrow (E_1,\delta_1)$   is simply  the sum of the actions of $\zeta_i$ on the Floer homology groups at each individual vertex in the cube; that is,

  $$\begin{CD}
 E_1 @>\alpha_1^{\zeta_i} >> E_1 \\
 @VV{\cong}V  @VV{\cong}V \\
\underset{I\subset \{0,1\}^l}\bigoplus \HFa(Y(I))@>\underset{I}\oplus A_I^{[\zeta_i]} >> \underset{I\subset \{0,1\}^l}\bigoplus \HFa(Y(I)),
\end{CD} 
$$
where $A_I^{[\zeta_i]}: \HFa(Y(I))\rightarrow \HFa(Y(I))$ is the map induced by $\zeta_i$, viewed as  curve in $Y(I)$.  Now each $Y(I)$ is diffeomorphic to $\#^k S^1\times S^2$ for some $k$, and  $$\HFa(\#^k S^1\times S^2) \cong \F[x_1,...,x_{k}]/(x_1^2,...,x_k^2)$$ as a module over $\Lambda^*(H_1(\#^{k}S^1\times S^2))$.   Here  a basis for $H_1(\#^{k} S^1\times S^2)$ is identified with the variables $x_1,...,x_{k}$, and such a basis is given by the lift of any $k$ proper arcs connecting the unknot  containing $p_0$ to each of the $k$ other unknots.  Under this correspondence, the action of $\zeta_i$ on the Floer homology of the branched double cover of a complete resolution agrees with the action of  $x_i$ on the reduced Khovanov homology of the complete resolution.  This completes the proof.
\end{proof}

The preceding theorem allows us to endow the entire \os \ spectral sequence with a  module structure.  To describe this, we say that a ring $R$ {\em acts on a spectral sequence} $\{(E_i,\delta_i)\}_{i=0}^\infty$ if for each $i$ we have
\begin{enumerate} 
\item  $E_i$ is an $R$--module.
\item The differential is $R$--linear:   $x\cm \delta_i(\beta)=\delta_i(x\cm \beta)$ for all $x\in R, \beta\in E_i$.  Equivalently, $R$ acts on $E_i$ by chain maps.
\item The $R$--module structure on $E_{i+1}$ is induced through homology by the module structure on $E_i$. \end{enumerate}
If the above conditions hold only for $i\ge r$, we say that the action {\em begins} at the $r$--th page. The following is now an easy corollary of Theorem \ref{thm:Khnaturality}.

\begin{cor}\label{E_2} Let $L=K_1\cup...\cup K_n\subset S^3$ be a link. Then  the spectral sequence from the reduced Khovanov homology of $\overline{L}$ to the Floer homology of $\Sigma(L)$ is acted on by $\modred$.  The resulting module structure on Khovanov homology is isomorphic to the module structure induced by the basepoint maps.  
\end{cor}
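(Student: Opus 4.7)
The plan is to assemble the corollary by stitching together Theorem~\ref{thm:Khnaturality} with the functoriality of the spectral-sequence construction reviewed in Subsection~\ref{subsec:ssconstruction}. First, I would choose, for each $i=1,\dots,n-1$, a proper arc $t_i\subset(S^3,L)$ from $p_0$ to a basepoint $p_i\in K_i$, and lift to $\zeta_i=\pi^{-1}(t_i)\subset\Sigma(L)$. Theorem~\ref{thm:Khnaturality} produces a filtered chain map $\curve^{\zeta_i}\co(C(D),d)\to(C(D),d)$, and the formalism of Subsection~\ref{subsec:ssconstruction} associates to each $\curve^{\zeta_i}$ a morphism $\alpha_r^{\zeta_i}\co(E_r,\delta_r)\to(E_r,\delta_r)$ on every page, which is a chain map and satisfies $(\alpha_r^{\zeta_i})_*=\alpha_{r+1}^{\zeta_i}$. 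A key fact that I would record is that the construction $a\mapsto a_r$ is multiplicative, since $(a\circ b)_r=a_r\circ b_r$ (conjugation by the reduction maps $g_r$), and taking $0$-th order parts preserves composition; hence $\alpha_r^{a\circ b}=\alpha_r^a\circ\alpha_r^b$ on every page.

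Next I would install the module relations on the $E_1$ page. By Theorem~\ref{thm:Khnaturality}, $(E_1,\delta_1)$ is isomorphic as a chain complex to $(\CKr(\overline D),\partial)$, under which $\alpha_1^{\zeta_i}$ corresponds to the Khovanov basepoint chain map $x_i$. From the explicit description in Section~\ref{sec:Kh} the maps $x_i$ satisfy $x_i\circ x_j=x_j\circ x_i$ and $x_i\circ x_i=0$ at the chain level, not merely up to homotopy. Consequently $\modred$ acts on $(E_1,\delta_1)$ through the $\alpha_1^{\zeta_i}$.

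I would then propagate this action to all subsequent pages by induction. Assuming $\modred$ acts on $(E_r,\delta_r)$ through the $\alpha_r^{\zeta_i}$, the multiplicativity recorded above gives
\[
\alpha_{r+1}^{\zeta_i}\circ\alpha_{r+1}^{\zeta_j}=(\alpha_r^{\zeta_i}\circ\alpha_r^{\zeta_j})_*=(\alpha_r^{\zeta_j}\circ\alpha_r^{\zeta_i})_*=\alpha_{r+1}^{\zeta_j}\circ\alpha_{r+1}^{\zeta_i},
\]
and similarly $(\alpha_{r+1}^{\zeta_i})^2=0$, so the action passes to $(E_{r+1},\delta_{r+1})$. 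Thus $\modred$ acts on the entire spectral sequence, with the action beginning at $E_1$. Finally, the induced action on $E_2\cong\Khr(\overline L)$ is by definition $(\alpha_1^{\zeta_i})_*=(x_i)_*=X_i$, the basepoint module structure from Proposition~\ref{prop:KhInv}.

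The only real obstacle is the commutativity/nilpotence input, and this is already handled at the chain level on $\CKr$ in Section~\ref{sec:Kh}; everything else is bookkeeping for the spectral sequence. One point worth being careful about is that the relations must hold exactly as chain maps on $E_1$ (so that they pass to homology automatically), rather than merely up to filtered homotopy on the total complex $(C(D),d)$, because the latter would give a weaker conclusion on intermediate pages; fortunately the formula $x_i\circ x_j=x_j\circ x_i$, $x_i^2=0$ in Section~\ref{sec:Kh} is an on-the-nose identity.
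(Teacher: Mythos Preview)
Your approach is essentially the paper's: let $X_i$ act on $E_r$ by $\alpha_r^{\zeta_i}$, verify the relations on $E_1$ via the identification $\alpha_1^{\zeta_i}=x_i$ with the Khovanov basepoint maps, and propagate to later pages by induction using $\alpha_{r+1}=(\alpha_r)_*$.

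One correction to your justification: the claim $(a\circ b)_r=a_r\circ b_r$ is not literally true. In the reduction construction $a_{r+1}=g_r\circ a_r\circ (g_r)^{-1}$, the composite $(g_r)^{-1}\circ g_r$ is only \emph{homotopic} to the identity (via the $h$ of Lemma~\ref{lem:cancel}), not equal to it, so conjugation is not strictly multiplicative. Fortunately you never actually use this. Your displayed equation
\[
\alpha_{r+1}^{\zeta_i}\circ\alpha_{r+1}^{\zeta_j}=(\alpha_r^{\zeta_i}\circ\alpha_r^{\zeta_j})_*
\]
follows directly from the defining property $\alpha_{r+1}^{\zeta_i}=(\alpha_r^{\zeta_i})_*$ together with functoriality of homology for compositions of chain maps, with no reference to $a_r$ or to the filtered map $\curve^{\zeta_i}\circ\curve^{\zeta_j}$. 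This is exactly how the paper phrases it: once the relation $\alpha_1^{\zeta_i}\alpha_1^{\zeta_j}+\alpha_1^{\zeta_j}\alpha_1^{\zeta_i}=0$ holds on $E_1$, it persists because ``if an endomorphism of a spectral sequence is zero on some page, it is zero for all subsequent pages, since $\alpha_{r+1}=(\alpha_r)_*$.'' So drop the multiplicativity paragraph and invoke this simpler mechanism, and your argument matches the paper's.
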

\begin{rem} Reduced  Khovanov {\em homology} appears as the $E_2$ page of a spectral sequence.   As our proof  indicates, the action begins at  $E_1$.  The action on $E_0$ holds only up to chain homotopy.
\end{rem}
\begin{proof}  For each component  $K_i\subset L$, choose a  basepoint $p_i\subset K_i$ on a diagram for $L$.  Pick a system of arcs $t_i$ connecting $p_0$ to $p_i$, and consider the closed curves $\zeta_i=\pi^{-1}(t_i), i=1,...,n-1,$ arising from their lifts to $\Sigma(L)$.   According to Theorem \ref{thm:Khnaturality} we obtain a collection of filtered chain maps,
$$ a^{\zeta_i}:  (C,d)\rightarrow (C,d),  \ \ i=1,...,n-1,$$
acting on the filtered chain complex which gives rise to the \os \ spectral sequence.

 Consider the free group $F_{n-1}$ on $n-1$ generators, $X_1,...,X_{n-1}$.  There is an obvious action of the group algebra $\F[F_{n-1}]$ on the spectral sequence: simply define the action of $X_i$ on  $E_r$ to be $\alpha_r^{\zeta_i}$, the map induced by $\curve^{\zeta_i}$ on $E_r$ and extend this to $\F[F_{n-1}]$ in the natural way.  Thus an element such as $X_1X_2 + X_6$ acts on $E_r$ by the chain map $\alpha_r^{\zeta_1}\circ\alpha_r^{\zeta_2}+ \alpha_r^{\zeta_6}$.  The fact that for each curve $\zeta_i$ and each $r$, $$\alpha_r^{\zeta_i}: (E_r,\delta_r)\rightarrow (E_r,\delta_r),$$ is a chain map satisfying,   $\alpha_{r+1}^{\zeta_i}= (\alpha_{r}^{\zeta_i})_*$, implies that this is indeed an action by $\F[F_{n-1}]$.  To see that the action descends to $\modred$, it suffices to check that $X_iX_j+X_jX_i$ and $X_i^2$ act as zero on each page of the spectral sequence, for all  $i,j$.  These relations clearly hold on the $E_1$ page, however, since Theorem \ref{thm:Khnaturality}  implies that the  map induced by $\curve^{\zeta_i}$ on $E_1$ agrees with $x_i$, the map on the reduced Khovanov complex  induced by $p_i$.   But if an endomorphism of a spectral sequence  is zero on some page, it is zero for all subsequent pages, since $\alpha_{r+1}=(\alpha_{r})_*$.  It follows that $X_iX_j+X_jX_i$ acts as zero on $E_r$ for all $r\ge 1$. 
 
 To show that the module structure on $E_2$ agrees with the module structure on Khovanov homology induced by the basepoint maps it suffices to note, again, that $\alpha_1^{\zeta_i}=x_i$ and $\alpha_2^{\zeta_i}=(\alpha_1^{\zeta_i})_*=(x_i)_*$.
 \end{proof} 

It is natural to ask about convergence  of the $\modred$--action.  In particular, since the homology of the spectral sequence converges to the Floer homology of $\Sigma(L)$, one could hope that the action converges to the $\Lambda^*(H_1(Y;\mathbb F))$-action  (where the homology classes   $[\zeta_i]$ serve as a spanning set for $H_1(Y;\mathbb F)$.)  While this is true it is  not necessarily as useful as one may  think, since convergence is phrased in terms of the associated graded module (see \cite{McCleary}) and this module may have many extensions.  Put differently, the module structure on the associated graded is sensitive only to the lowest order terms of the filtered chain maps $\curve^{\zeta_i}$, and the higher order terms may contribute in a non-trivial way to the module structure on $\HFa(\Sigma(L))$.  Despite this, the corollary can still be used to prove the following ``collapse result" for the spectral sequence.  This theorem will be one of our main tools for showing that the Khovanov module detects unlinks.

\begin{prop}\label{collapse}
Suppose $$\Khr(\overline{L})\cong \modred$$ as a module over $\modred$.  Then $$\HFa(\Sigma(L))\cong \modred$$ as a module over $\Lambda^*(H_1(\Sigma(L);\mathbb F))$. 
\end{prop}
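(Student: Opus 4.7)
\vspace{2ex}\noindent{\bf Proof proposal.} The plan is to apply the $\modred$-equivariant spectral sequence of Corollary~\ref{E_2}, from $E_2\cong \Khr(\overline L)$ converging to $\HFa(\Sigma(L))$, whose $\modred$-action on the abutment sends $X_i$ to $[\zeta_i]$. The goal is to show the spectral sequence collapses at $E_2$ with no extension problem, producing an $\modred$-module isomorphism $\HFa(\Sigma(L))\cong\modred$, and then to upgrade this to a $\Lambda^{*}(H_1(\Sigma(L);\F))$-module isomorphism.

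The crux is an indecomposability observation. The ring $\modred$, regarded as a module over itself, has one-dimensional socle spanned by $X_1\cdots X_{n-1}$ (the unique nonzero element annihilated by every $X_j$), so it is indecomposable as an $\modred$-module. Hence the hypothesis forces $\Khr(\overline L)$ to be indecomposable as well. But each $X_i$ preserves the homological grading on the Khovanov chain complex, so $\Khr(\overline L)=\bigoplus_h\Khr^{(h)}(\overline L)$ is a direct sum decomposition of $\modred$-modules indexed by homological grading. Indecomposability forces all but one summand to vanish, so $\Khr(\overline L)$ is concentrated in a single homological grading $h$.

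Collapse then follows by a short grading induction. The differential $d_r$ on $E_r$ for $r\ge 2$ strictly raises the homological grading by $r$, so concentration of $E_2$ at grading $h$ forces $d_2=0$; consequently $E_3\cong E_2$ is still concentrated at $h$, whence $d_3=0$, and so on. Therefore $E_\infty\cong E_2\cong\modred$ as $\modred$-modules. Since $E_\infty$ is concentrated at one filtration level and the cube filtration on $\HFa(\Sigma(L))$ is finite, all successive quotients of the filtration away from level $h$ must vanish, which pins down $\HFa(\Sigma(L))\cong E_\infty\cong\modred$ directly as $\modred$-modules---there is no extension problem---with $X_i$ acting as $[\zeta_i]$.

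Finally, I would upgrade to an isomorphism of $\Lambda^{*}(H_1(\Sigma(L);\F))$-modules. Linear independence of $[\zeta_1],\dots,[\zeta_{n-1}]$ in $H_1(\Sigma(L);\F)$ is immediate from the free rank-one $\modred$-structure: any nontrivial combination $\sum c_i[\zeta_i]$ would act as the nonzero operator $\sum c_iX_i$ on $\HFa\cong\modred$. For the complementary inequality $\dim_\F H_1(\Sigma(L);\F)\le n-1$, I would invoke the standard Heegaard Floer rank bound $\dim_\F\HFa(Y;\F)\ge 2^{\dim_\F H_1(Y;\F)}$ together with $\dim_\F\HFa(\Sigma(L))=2^{n-1}$ just established. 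Combining the two bounds gives $\dim_\F H_1(\Sigma(L);\F)=n-1$, so that the map $\modred\to\Lambda^{*}H_1(\Sigma(L);\F)$ sending $X_i\mapsto[\zeta_i]$ is a ring isomorphism and the $\modred$-module identification of $\HFa(\Sigma(L))$ with $\modred$ upgrades to the desired $\Lambda^{*}H_1$-module isomorphism. The main obstacle is the indecomposability-plus-grading argument in the first two paragraphs; once it is in hand, the absence of extensions is automatic, and the final upgrade relies on a rank bound of a different flavor but is routine.
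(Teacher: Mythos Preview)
Your collapse argument is essentially the paper's: both deduce that $\Khr(\overline L)$ lives in a single homological grading (you via the one-dimensional socle and indecomposability, the paper via the observation that $1$ generates the module and the action preserves homological degree), and both conclude $E_2=E_\infty$ from the resulting filtration concentration.

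There is, however, a real gap in your passage from ``$\HFa(\Sigma(L))\cong\modred$ as $\modred$-modules'' to the $\Lambda^*H_1$-statement. Two points need attention. First, the phrase ``with $X_i$ acting as $[\zeta_i]$'' is exactly what must be proved: the $\modred$-action on $E_\infty$ is by the \emph{lowest-order} part $\alpha_\infty^{\zeta_i}$ of the filtered map $a^{\zeta_i}$, whereas the genuine $H_1$-action on $\HFa$ is the full induced map $A^{[\zeta_i]}=(a^{\zeta_i})_*$. These agree here only because $E_\infty$ sits in a single filtration level, forcing all higher-order pieces of $a_\infty^{\zeta_i}$ to vanish; the paper makes this step explicit, and you should too---``no extension problem'' for the filtration is not the same statement.

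Second, and more seriously, the rank inequality $\dim_\F\HFa(Y;\F)\ge 2^{\dim_\F H_1(Y;\F)}$ you invoke is not a standard fact and is in fact false in general: for $Y=T^3$ one has $\dim_\F H_1=3$ while $\dim_\F\HFa(T^3)=6<2^3$ (the triple cup product obstructs standard $\HFinf$). So your upper bound on $\dim H_1(\Sigma(L);\F)$ does not follow. The paper avoids this entirely by using the topological fact that the lifts $\zeta_i$ of arcs joining the basepoints already \emph{span} $H_1(\Sigma(L);\F)$---a standard property of branched double covers---which, combined with the linear independence you correctly extract from the free rank-one module structure, gives $\dim H_1=n-1$ directly. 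Replace your rank-bound step with this spanning statement and the argument goes through.
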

\begin{proof}
The assumption on the reduced Khovanov homology implies, in particular, that the entire Khovanov homology is supported in homological grading zero.  This is because $1\in \Khr(L)$ generates the homology as a module over $\modred$, and the module action preserves the homological grading (it is induced by chain maps of degree zero).  

In the proof of Theorem \ref{thm:Khnaturality}, however,  the filtration on the complex computing $\HFa(\Sigma(L))$ arises from a graded decomposition coming from the norm of a multi-framing 
$$ X(\{0,1\}^l)= \bigoplus_{i\ge 0} \ C^{(i)}, \ \ \ \  \ \  C^{(i)}= \bigoplus_{ \{I \in \{0,1\}^l | \ |I|=i\}} \ \CFa(Y(I))$$ and the norm $|I|$
 corresponds to the homological grading on the Khovanov complex.  Thus the $E_2$ page of the spectral sequence, which is identified with $\Khr(L)$, is supported in a single filtration. Since the higher differentials strictly lower the filtration, it follows that the spectral sequence has collapsed and that  $\Khr(\overline{L})\cong E_2=E_\infty\cong \HFa(\Sigma(L))$.  Moreover, since the module structure on $E_\infty$ is induced by $E_2$ through homology, it follows that this is an isomorphism of $\modred$ modules.  We claim that the latter module structure agrees with the  $\Lambda^*(H_1(\Sigma(L);\mathbb F))$--module structure on Floer homology.

To see this, note first that the curves $\zeta_i$ span $H_1(\Sigma(L);\F)$. Now on  one hand we have the filtered chain maps $\curve^{\zeta_i}$ whose induced maps on homology agree with $A^{[\zeta_i]}:\HFa(\Sigma(L))\rightarrow \HFa(\Sigma(L))$.  On the other we have $X_i$,  the maps induced by $\curve^{\zeta_i}$ on the $E_\infty$  page of the spectral sequence.  
To see that $A^{[\zeta]}=X_i$, it suffices to recall the construction from Section \ref{subsec:ssconstruction} of the morphism of spectral sequences induced by  $\curve^{\zeta_i}$.  By construction, each filtered  map $\curve^{\zeta_i}$ is chain homotopic to a map $$\curve_\infty^{\zeta_i}: E_\infty \rightarrow E_\infty, \ \ \ \curve_\infty^{\zeta_i}= \curve_\infty^{(0)}+ \curve_\infty^{(1)}+\curve_\infty^{(2)}+...,$$
and the morphism induced on the $E_\infty$ is, by definition, the lowest order term in this map:  $\alpha^{\zeta_i}_\infty:=\curve_\infty^{(0)}$.  But the discussion above indicates that $E_\infty$ is supported in a single filtration summand, hence the higher order terms in the decomposition of $\curve_\infty^{\zeta_i}$ vanish.  It follows that the maps  $\Curve^{[\zeta]}=(\curve^{\zeta_i})_*=\curve_\infty^{\zeta_i}$  and $X_i= a_\infty^{(0)}$ are equal.  
The proposition follows.\end{proof}

\section{A nontriviality theorem for homology actions}
In this section we prove a non-triviality result for Floer homology, 
Theorem~\ref{thm:ContNonzero}.  Roughly speaking, the theorem says that the homology of the Floer homology with respect to the action of any curve is non-trivial, provided a manifold does not contain a homologically essential $2$--sphere.  This  detection theorem for Floer homology  will transfer through the spectral sequence of the previous section to our detection theorem for Khovanov homology.

Following Kronheimer and Mrowka \cite{KMsuture}, let
$$HF^{\circ}(Y|R)=\bigoplus_{\{\mathfrak s\in\mathrm{Spin}^c(Y) | \langle c_1(\mathfrak s),[R]\rangle=x(R) \} }HF^{\circ}(Y,\mathfrak s),$$
where $R$ is a Thurston norm minimizing surface in $Y$ and $x(R)$
is its Thurston norm.

\begin{thm}\label{thm:ContNonzero}
Suppose $Y$ is a closed, oriented and irreducible $3$--manifold
with $b_1(Y)>0$. Let $R\subset Y$ be a Thurston norm minimizing
connected surface. Then there exists a cohomology class
$[\omega]\in H^2 ( Y;\mathbb Z )$ with $\langle
[\omega],[R]\rangle>0$, such that for any $[\zeta]\subset
H_1(Y;\mathbb Z)$ the homology group with respect to $\Curve^{[\zeta]}$
$$H(\underline{\widehat{HF}}(Y|R;\mathcal R_{[\omega]}),\Curve^{[\zeta]})$$
has positive rank as an $\mathcal R$--module.
\end{thm}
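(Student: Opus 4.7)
Plan: My approach would closely follow Ozsv\'ath--Szab\'o's proof of the standard twisted nontriviality theorem (``Holomorphic disks and genus bounds''), augmented by Theorem~\ref{thm:Cob} to track the $H_1$-action. The geometric input is Gabai's theorem: there exists a taut foliation $\mathcal F$ on $Y$ having $R$ as a compact leaf. Eliashberg--Thurston perturbs $\mathcal F$ into a positive, tight, weakly semi-fillable contact structure $\xi$ on $Y$; the symplectic form on a weak semi-filling $(W,\Omega)$ restricts to a closed $2$-form on $Y$ whose cohomology class $[\omega]\in H^2(Y;\mathbb Z)$ pairs positively with $[R]$. This is our choice of $[\omega]$. (For $[\zeta]$ that vanish in $H_1(Y;\mathbb F)$, the operator $\Curve^{[\zeta]}$ is zero and the claim reduces to the non-triviality of $\uHFa(Y|R;\mathcal R_{[\omega]})$ itself, which is Ozsv\'ath--Szab\'o's standard theorem; so we may assume $[\zeta]$ has non-trivial image in $H_1(Y;\mathbb F)$.)

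Next I would embed $Y$ as a separating hypersurface in a closed symplectic $4$-manifold $(X,\Omega)$, via Giroux's theorem and Eliashberg's capping, arranged so that on one piece $X_1$ of $X\setminus Y$ every class in $H_1(Y;\mathbb Z)$ becomes null-homologous. This can be achieved by attaching $2$-handles along curves representing a basis of $H_1(Y)$ before capping, with care to preserve non-triviality of the relevant Seiberg--Witten invariants. By Taubes' theorem and the Ozsv\'ath--Szab\'o $4$-manifold invariants formula, the relative invariant of $X_1$ in its canonical $\SpinC$-structure produces a non-zero class $\Theta \in \uHFa(Y|R;\mathcal R_{[\omega]})$. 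Theorem~\ref{thm:Cob}, applied to $X_1$ (in which each $[\zeta]\subset Y$ is null-homologous, so its counterpart on the other side vanishes), then forces $\Curve^{[\zeta]}(\Theta)=0$ for every $[\zeta]$.

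To show $\Theta$ is not in the image of $\Curve^{[\zeta]}$, I would use the absolute $\mathbb Q$-grading. The adjunction inequality associated to $R$ bounds the grading of non-zero classes in the twisted summand $\uHFa(Y|R;\mathcal R_{[\omega]})$ from above, and the expected-dimension formula for the moduli space defining $\Theta$ shows that $\Theta$ saturates this bound. Since $\Curve^{[\zeta]}$ strictly decreases absolute grading by one, $\Theta$ admits no preimage, yielding a non-zero class in $H(\uHFa(Y|R;\mathcal R_{[\omega]}),\Curve^{[\zeta]})$. Because $\Theta$ generates a free $\mathcal R$-summand (the twist variable $T$ acts freely on relative invariants coming from closed symplectic $4$-manifolds with positive pairing against $[\omega]$), this homology has positive $\mathcal R$-rank.

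The main obstacle I anticipate is arranging the symplectic cap $X_1$ to simultaneously kill all of $H_1(Y;\mathbb Z)$ \emph{and} yield a relative invariant saturating the adjunction grading bound: the $2$-handles needed for the former may shift the grading of $\Theta$ or disturb the non-vanishing guaranteed by Taubes' theorem. A secondary subtlety is verifying that the grading formula and adjunction bound extend cleanly from the untwisted to the $[\omega]$-twisted setting. Carefully interweaving the handle attachments with control of the $\SpinC$-decomposition, perhaps by a blow-up / handle-slide argument to keep track of the Seiberg--Witten basic classes, is the technical heart of the proof.
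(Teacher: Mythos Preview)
Your geometric setup---Gabai's taut foliation, Eliashberg--Thurston perturbation, Giroux open book, and Eliashberg's symplectic capping---matches the paper exactly, and your use of Theorem~\ref{thm:Cob} together with a simply-connected cap to show $\Curve^{[\zeta]}$ annihilates the distinguished element is the right idea. Two remarks on that half: you do not need to attach extra $2$--handles to kill $H_1(Y)$, because Eliashberg's Lefschetz-fibration caps $V_0,V_0'$ can already be chosen with $H_1=0$; and the paper's actual argument for $\Curve^{[\zeta]}(c(\xi;[\omega]))=0$ is slightly different, pulling $c(\xi;[\omega])$ back along the $2$--handle cobordism $W_0$ to the $0$--surgery $Y_0$ on the binding, where the relevant twisted $\widehat{HF}$ has a two-generator Heegaard diagram with no index-one disks, so every $\Curve^{[\zeta]}$ vanishes tautologically.

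The genuine gap is your ``not in the image'' argument. The adjunction inequality constrains which $\SpinC$ structures support nontrivial Floer homology; it does \emph{not} give an upper bound on Maslov gradings within $\underline{\widehat{HF}}(Y|R;\mathcal R_{[\omega]})$. Worse, the extremal $\SpinC$ structures in $Y|R$ have non-torsion $c_1$ when $g(R)>0$, so there is no absolute $\mathbb Q$--grading to appeal to, and the claim that $\Theta$ ``saturates'' a grading bound has no content. The paper instead uses Theorem~\ref{thm:Cob} a second time, symmetrically: take the cobordism $W=V_0'\cup(-W_0)$ from $-Y$ to $S^3$, which has $H_1(W)=0$. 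If $k\,c(\xi;[\omega])=\Curve^{[\zeta]}(a)$ for some nonzero $k\in\mathcal R$, push forward by $\underline{\widehat F}_{W\setminus B;[\omega]}$; the left side is non-torsion (this is the Ozsv\'ath--Szab\'o nontriviality argument), while the right side equals $\Curve^{0}\circ\underline{\widehat F}_{W\setminus B;[\omega]}(a)=0$ since $[\zeta]$ dies in $W$. This contradiction is the missing mechanism---both halves of the argument are cobordism-naturality applications, not a grading trick.
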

\begin{proof}
We adapt the argument of Ozsv\'ath and Szab\'o
\cite[Theorem~4.2]{OSzGenus}.

By Gabai \cite{G1}, there exists a taut foliation $\mathscr F$ of
$Y$, such that $R$ is a compact leaf of $\mathscr F$. The work of
Eliashberg and Thurston \cite{ET} shows that $\mathscr F$ can be
approximated by a weakly symplectically semi-fillable contact
structure $\xi$, where $Y\times[-1,1]$ is the weak semi-filling.

By Giroux \cite{Giroux}, $Y$ has an open book decomposition which supports $\xi$.  By plumbing positive Hopf bands to the page of such an open book, we may assume that the binding is connected and that the genus of the page is greater than one.   Let $K\subset Y$ denote the
binding, and let $Y_0$ be the fibered $3$--manifold obtained from $Y$ by $0$--surgery on $K$.  There is a  $2$--handle cobordism $W_0\co Y\to
Y_0$.   Similarly, there is a $2$--handle cobordism $-W_0\co Y_0\to Y$, obtained by reversing the orientation of $W_0$ and viewing it ``backwards". 

 Eliashberg \cite{El}  shows that 
the weak semi-filling $Y\times [-1,1]$ can be embedded in a closed symplectic $4$--manifold $X$ (see also Etnyre \cite{Et}, for an alternative construction).   This is done by constructing symplectic caps for the boundary components.  Eliashberg's caps are produced by first equipping the $2$-handle cobordisms $W_0$ and $-W_0$  with appropriate symplectic structures, and then extending these structures over  Lefschetz fibrations $V_0$ and $V_0'$ whose boundaries are the fibered $3$--manifolds $-Y_0$ and  $Y_0$, respectively (note  $V_0$ and $V_0'$ are not, in general, orientation-reversing diffeomorphic).     Moreover, \cite[Theorem~1.3]{El} says that we can
choose $V_0$ and $V_0'$ so that \begin{equation}\label{eq:H1=0}
H_1(V_0)=H_1(V_0')=0.
\end{equation}

The result of the construction is a closed symplectic $4$--manifold, $(X,\omega)$, which decomposes as
$$X=V_0'\underset{-Y_0}\cup-W_0\underset{-Y=Y\times\{-1\}}\cup Y\times[-1,1]\underset{-Y}\cup W_0\underset{-Y_0}\cup V_0.$$ We view the cobordism from right to left, so that the orientation shown on a $3$--manifold is that which it inherits as the oriented boundary of the $4$--manifold to the right of the union in the decomposition.  By perturbing $\omega$ slightly and  multiplying by an
integer, we may assume $[\omega]\in H^2(X;\mathbb Z)$. We can
also arrange that $b_2^+(V_0')>1$ and $b_2^+(V_0)>1$, and hence can decompose $V_0$  by an
admissible cut along a $3$--manifold,  $N$. Thus $X=X_1\cup_N X_2$ with $b_2^+(X_i)>0$,
and
$$X_2=V_0'\underset{-Y_0}\cup-W_0\underset{-Y}\cup Y\times[-1,1]\underset{-Y}\cup W_0\underset{-Y_0}\cup (V_0\setminus X_1). $$

Denote  the canonical Spin$^c$ structure associated  to $\omega$  by $\mathfrak k(\omega)$, and
  the restriction of $\mathfrak k(\omega)$ to
$Y_0$ by $\mathfrak t$. Let $c(\xi;[\omega])\in
\underline{\widehat{HF}}(-Y|R;\mathcal R_{[\omega]})/\mathcal R^\times$ be the
twisted Ozsv\'ath--Szab\'o contact invariant defined in
\cite{OSzCont,OSzGenus}, and let $c^+(\xi;[\omega])\in
\underline{HF}^+(-Y|R;\mathcal R_{[\omega]})/\mathcal R^\times$ be its image under
the natural map $\iota_*:\underline{\widehat{HF}}\to \underline{HF}^+$. Let $\pi\co Y_0\to S^1$ be
the fibration on the $0$--surgery induced by the open book decomposition of $Y$, and let
$c(\pi)$ be a generator of
$\underline{\widehat{HF}}(-Y_0,\mathfrak t;\mathcal R_{[\omega]})$
whose image in $\underline{HF}^+(-Y_0,\mathfrak t;\mathcal
R_{[\omega]})\cong\mathcal R$ is a generator $c^+(\pi;[\omega])$. By \cite[Proposition~3.1]{OSzCont}
\begin{equation}\label{eq:F_W0}
\underline{\widehat F}_{W_0;[\omega]}(c(\pi))\doteq  c(\xi;[\omega]),
\end{equation}
where ``$\doteq$" denotes equality, up to multiplication by a unit.
The following commutative diagram summarizes the relationship between the contact invariants:
$$
\begin{CD}
c(\pi) @>\underline{\widehat F}_{W_0;[\omega]}>>  c(\xi;[\omega])\\
@V\iota_*VV @V\iota_*VV\\
c^+(\pi;[\omega])@>\underline{F}^+_{W_0;[\omega]}>> c^+(\xi;[\omega]).
\end{CD}
$$

Let $W=V_0'\cup -W_0$, and let $B$ be an open $4$-ball in $V_0'$.  As $W$ is a symplectic filling of $(Y,\xi)$,   the argument in
\cite[proof of Theorem 4.2]{OSzGenus} shows that
$$\underline{F}^+_{W\setminus B;[\omega]}(c^+(\xi;[\omega]))$$ is a
non-torsion element in $\underline{HF}^+(S^3;\mathcal
R_{[\omega]})$. Note that here, as above, we regard $W$ from right to left; namely, as a cobordism from $-Y$ to $S^3$.  It follows that
\begin{equation}\label{eq:F_W}
\underline{\widehat{F}}_{W\setminus B;[\omega]}(c(\xi;[\omega]))\quad
\text{is non-torsion.}
\end{equation}

One can construct  a Heegaard diagram for $-Y_0$ such that there
are only two intersection points representing $\mathfrak t$, and such that
there are no holomorphic disks connecting them which avoid the hypersurface specified by the basepoint.  Indeed, such a Heegaard diagram is constructed in the course of the proof of  \cite[Theorem 1.1]{OSzCont}, and its desired properties are verified in the  proof of \cite[Proposition 3.1]{OSzCont} (here we use the fact that the page of the open book has genus greater than one, though \cite[Section 3, specifically Remark 3.3]{Wu} indicates that the same technique can be adapted for genus one open books).   Since $H_1(Y;\Z)$ is naturally a subgroup of $H_1(Y_0;\Z)$,  any $[\zeta]\in H_1(Y)$ can
be viewed as an element in $H_1(Y_0;\Z)$, and 
 the preceding discussion implies that $\Curve^{[\zeta]}=0$ on
$\underline{\widehat{HF}}(-Y_0,\mathfrak t;\mathcal
R_{[\omega]})$ for every $[\zeta]$. Using (\ref{eq:F_W0}) and Theorem~\ref{thm:Cob},
\begin{eqnarray*}
\Curve^{[\zeta]}(c(\xi;[\omega]))&\doteq&\Curve^{[\zeta]}\circ\underline{\widehat F}_{W_0;[\omega]}( c(\pi))\\
&=&\underline{\widehat F}_{W_0;[\omega]}\circ \Curve^{[\zeta]}( c(\pi))\\
&=&\underline{\widehat F}_{W_0;[\omega]}(0)=0.
\end{eqnarray*}
Hence $c(\xi;[\omega])\in\mathrm{ker}(\Curve^{[\zeta]})$.

Now if $kc(\xi;[\omega])\in\mathrm{im}(\Curve^{[\zeta]})$ for some
nonzero $k\in \mathcal R$, then there is an element $a\in
\underline{\widehat{HF}}(Y;\mathcal R_{[\omega]})$ such that
$\Curve^{[\zeta]}(a)=kc(\xi;[\omega])$. Using (\ref{eq:H1=0}) and
Theorem~\ref{thm:Cob},
\begin{eqnarray*}
\underline{\widehat F}_{W\setminus B;[\omega]}(kc(\xi;[\omega]))&=&\underline{\widehat F}_{W\setminus B;[\omega]}\circ \Curve^{[\zeta]}(a)\\
&=&\Curve^{0}\circ \underline{\widehat F}_{W\setminus B;[\omega]}(a)\\
&=&0,
\end{eqnarray*}
a contradiction to (\ref{eq:F_W}). Hence
$kc(\xi;[\omega])\notin\mathrm{im}(\Curve^{[\zeta]})$. It follows that
$c(\xi;[\omega])$ represents a non-torsion element in
$H(\underline{\widehat{HF}}(Y|R;\mathcal
R_{[\omega]}),\Curve^{[\zeta]})$, so our conclusion holds.
\end{proof}

\begin{cor}\label{cor:ConnNonzero}
Suppose $Y$ is a closed, oriented $3$--manifold which does not
contain $S^1\times S^2$ connected summands. Then there exists a
cohomology class $[\omega]\in H^2(Y;\mathbb Z)$ such that for any
$[\zeta]\subset H_1(Y;\mathbb Z)$ the homology group with respect
to $\Curve^{[\zeta]}$
$$H(\underline{\widehat{HF}}(Y;\mathcal R_{[\omega]}),\Curve^{[\zeta]})$$
has positive rank as an $\mathcal R$--module.
\end{cor}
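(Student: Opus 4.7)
The plan is to reduce to Theorem~\ref{thm:ContNonzero} via the prime decomposition of $Y$. Since $Y$ has no $S^1\times S^2$ summands, write $Y=Y_1\#\cdots\#Y_k$ with each $Y_i$ irreducible. For every $i$ with $b_1(Y_i)>0$, apply Theorem~\ref{thm:ContNonzero} to $Y_i$, with a Thurston norm minimizing connected surface $R_i\subset Y_i$, to obtain a class $[\omega_i]\in H^2(Y_i;\mathbb Z)$ satisfying the non-vanishing conclusion for every $[\zeta_i]\in H_1(Y_i;\mathbb Z)$; for the remaining $i$ set $[\omega_i]=0$. Under the natural splittings $H^*(Y;\mathbb Z)\cong\bigoplus_i H^*(Y_i;\mathbb Z)$, define $[\omega]=\sum_i[\omega_i]$, and for a given $[\zeta]\in H_1(Y;\mathbb Z)$ write $[\zeta]=\sum_i[\zeta_i]$.

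Iterating the twisted analog of the connected-sum K\"unneth formula for $\uCFa$ (the twisted version of \eqref{eq:ConnSum} indicated at the end of Subsection~\ref{sec:Twist}) produces an isomorphism of $\mathcal R$-chain complexes
$$\uCFa(Y;\mathcal R_{[\omega]})\;\cong\;\bigotimes_{i=1}^k\uCFa(Y_i;\mathcal R_{[\omega_i]}),$$
under which the Floer differential and the curve action split as $\sum_i\partial_i$ and $\curve^{\zeta}=\sum_i\curve^{\zeta_i}$, each operator acting on its own tensor factor. Because $\mathcal R=\mathbb F[T,T^{-1}]$ is only a PID, naive K\"unneth would introduce $\Tor$ terms; I would sidestep this by tensoring everything with the fraction field $\mathbb F(T)$, which is flat over $\mathcal R$. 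Since $\mathrm{rank}_{\mathcal R}M=\dim_{\mathbb F(T)}(M\otimes_{\mathcal R}\mathbb F(T))$, it suffices to show the target is non-zero after this tensor. Over the field $\mathbb F(T)$ the K\"unneth formula is torsion-free, and two applications---first to the Floer differential, then to the square-zero action $\Curve^{[\zeta_i]}$ viewed as a differential on each factor---yield
$$H\bigl(\uHFa(Y;\mathcal R_{[\omega]}),\Curve^{[\zeta]}\bigr)\otimes_{\mathcal R}\mathbb F(T)\;\cong\;\bigotimes_{i=1}^{k}\Bigl(H\bigl(\uHFa(Y_i;\mathcal R_{[\omega_i]}),\Curve^{[\zeta_i]}\bigr)\otimes_{\mathcal R}\mathbb F(T)\Bigr),$$
the outer tensor being over $\mathbb F(T)$.

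Finally, each factor on the right is non-zero. For $i$ with $b_1(Y_i)>0$ this is exactly Theorem~\ref{thm:ContNonzero}: the action $\Curve^{[\zeta_i]}$ respects the $\SpinC$ decomposition, so positive $\mathcal R$-rank on the summand $\uHFa(Y_i|R_i;\mathcal R_{[\omega_i]})$ propagates to the full group. For $i$ with $b_1(Y_i)=0$, the class $[\zeta_i]$ is torsion; by Lemma~\ref{homologyaction} the operator $\Curve^{[\zeta_i]}$ depends only on the image in $H_1/\Tors$, hence vanishes, and since $[\omega_i]=0$ the factor reduces to $\widehat{HF}(Y_i;\mathbb F)\otimes_{\mathbb F}\mathbb F(T)$, which is nonzero because Heegaard Floer homology of any closed oriented $3$-manifold is nontrivial. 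The main technical obstacle is the $\Tor$ bookkeeping that would otherwise appear in K\"unneth over the PID $\mathcal R$, which is circumvented by the flatness of $\mathbb F(T)$ over $\mathcal R$.
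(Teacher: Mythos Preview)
Your proof is correct and follows essentially the same approach as the paper: reduce to irreducible summands via prime decomposition, invoke Theorem~\ref{thm:ContNonzero} on each summand with $b_1>0$, note that on rational homology sphere summands the action vanishes by Lemma~\ref{homologyaction} while $\HFa\ne 0$, and glue everything together using the twisted connected-sum formula~\eqref{eq:ConnSum}. The paper's write-up is terser---it simply splits into the cases $b_1(Y)>0$ and $b_1(Y)=0$ and cites the twisted version of~\eqref{eq:ConnSum}---whereas you organize the argument summand-by-summand and make the $\Tor$ bookkeeping explicit by passing to the fraction field $\mathbb F(T)$; this is a welcome clarification of a point the paper leaves implicit.
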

\begin{proof}
In the case that $b_1(Y)>0$, this follows from Theorem~\ref{thm:ContNonzero} and the twisted
version of (\ref{eq:ConnSum}).  When $b_1(Y)=0$, the theorem holds with $[\omega]=0$.  Indeed, when $[\omega]=0$ the corresponding Floer homology group $\HFa(Y;\mathcal R_{[\omega]})$ is simply the homology of $\CFa(Y)\otimes_\F \F[T,T^{-1}]$; that is, we take the untwisted complex and tensor over $\F$ with $\F[T,T^{-1}]$.  Now \cite[Proposition 5.1]{OSzAnn2} indicates that the Euler characteristic of $\HFa(Y)$, and hence its rank over $\F$, is non-trivial.   The universal coefficient theorem then implies $H_*(\CFa(Y)\otimes_\F \F[T,T^{-1}])$ has positive rank as an $\F[T,T^{-1}]$--module.  Finally, since every class  $[\zeta]\in H_1(Y;\Z)$ is torsion, Lemma \ref{homologyaction} shows that the $\Curve^{[\zeta]}=0$, as an operator on $\HFa(Y;\mathcal R)$.
\end{proof}

\section{Links with the Khovanov module of an unlink}
We now bring together the results from previous sections to prove our main theorems.  The first task is to prove Theorem \ref{thm:HFmodUnique}, which states that  Heegaard Floer homology, as a module over $\Lambda(H^1(Y;\F))$, detects $S^1\times S^2$ summands in the prime decomposition of a closed oriented $3$--manifold.  By way of the module structure on the spectral sequence  from  Khovanov homology to Heegaard Floer homology (specifically Proposition \ref{collapse}), this detection theorem will quickly lead to the Khovanov module's detection of unlinks, Theorem \ref{thm:main}.

The detection theorem for the Heegaard Floer module  makes  use of 
 of Corollary~\ref{cor:ConnNonzero} from the previous section.  The main challenge is to take this corollary, which is a non-vanishing result for homology actions on Heegaard Floer homology with {\em twisted coefficients}, and use it to obtain a characterization result for the Floer homology module with untwisted, i.e. $\F$, coefficients.   
Not surprising, to pass from twisted coefficients to untwisted coefficients we will
need the universal coefficients theorem. Let us recall its
statement from Spanier \cite{Sp}.

\begin{thm}\label{thm:UCT}

Let $C$ be a free chain complex over a  principal ideal domain, $R$, and suppose that $M$ is an $R$--module. Then there is a
functorial short exact sequence
$$0\to H_q(C)\otimes_R M\to H_q(C;M)\to \mathrm{Tor}_R(H_{q-1}(C),M)\to 0.$$
This exact sequence is split, but the splitting may not be
functorial.
\end{thm}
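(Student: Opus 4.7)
The plan is to follow the classical proof of the universal coefficient theorem: construct a short exact sequence of auxiliary chain complexes, tensor with $M$, and extract the desired sequence from the resulting long exact sequence in homology.

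First, regard the cycles $Z_q = \ker(\partial_q)$ and boundaries $B_{q-1} = \operatorname{image}(\partial_q)$ as subgroups of $C_q$ and $C_{q-1}$, and view $Z_*$ and $B_{*-1}$ as chain complexes with trivial differential. Then there is a short exact sequence of chain complexes
$$0 \to Z_* \to C_* \xrightarrow{\partial} B_{*-1} \to 0.$$
Because $R$ is a PID and $C_*$ is free, every submodule of $C_q$ is free; in particular $B_{q-1}$ is free, so this sequence is split in each degree. Consequently it remains exact after applying $-\otimes_R M$.

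Next, take the long exact sequence in homology of the tensored complex. Since $Z_*$ and $B_{*-1}$ have zero differential, their homology in each degree is themselves, and one checks that the connecting homomorphism is the map $i_q \otimes \mathrm{id}_M$, where $i_q \co B_q \hookrightarrow Z_q$ is the natural inclusion. Extracting the short exact sequence associated to each $q$ yields
$$0 \to \operatorname{coker}(i_q \otimes \mathrm{id}_M) \to H_q(C; M) \to \ker(i_{q-1} \otimes \mathrm{id}_M) \to 0.$$
The cokernel identifies with $H_q(C) \otimes_R M$ by right-exactness of tensor product applied to $B_q \to Z_q \to H_q(C) \to 0$. The kernel is exactly $\mathrm{Tor}_R(H_{q-1}(C), M)$, since $0 \to B_{q-1} \to Z_{q-1} \to H_{q-1}(C) \to 0$ is a length-one free resolution of $H_{q-1}(C)$ over the PID $R$.

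For the splitting, I would choose a section $s\co B_{q-1} \to C_q$ of $\partial$, which exists because $B_{q-1}$ is free, yielding a (non-natural) decomposition $C_q \cong Z_q \oplus B_{q-1}$. This induces a chain-level splitting that survives $-\otimes_R M$ and hence splits the displayed short exact sequence. The splitting depends on the choice of $s$ and cannot in general be made functorial in $M$. The main subtlety is correctly identifying the connecting homomorphism with the inclusion $B_q \hookrightarrow Z_q$; everything else is formal diagram chasing, with the hypothesis that $C_*$ is free over a PID doing the essential work by guaranteeing that the relevant submodules are again free.
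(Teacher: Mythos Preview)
Your proof is correct and is the standard textbook argument for the universal coefficient theorem. Note, however, that the paper does not actually prove this statement: it is quoted as a classical result from Spanier \cite{Sp}, so there is no paper proof to compare against. One small remark: when you say the splitting ``cannot in general be made functorial in $M$,'' the relevant non-functoriality is with respect to chain maps $C \to C'$, not in the coefficient module $M$; the splitting is perfectly natural in $M$ once the section $s$ is fixed.
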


We wish to apply the universal coefficients theorem to understand the Heegaard Floer homology with $\F$ coefficients through an understanding of the Floer homology with twisted coefficients, where the twisted coefficient ring is $\mathcal R=\F[T,T^{-1}]$.  Viewing $\F$ as the trivial $\mathcal R$ module (where $T$ acts as $1$), the following lemma analyzes the tensor product in the universal coefficient splitting.

\begin{lem}\label{lem:Tor}
Suppose $M$ is a finitely generated module over $\mathcal R=\F[T,T^{-1}]$,
$M^{\mathrm{tors}}$ is the submodule of $M$  consisting of all
torsion elements, and $M^{\mathrm{free}}=M/M^{\mathrm{tors}}$.
Then there is a short exact sequence
$$0\to M^{\mathrm{tors}}\otimes_{\mathcal R}\mathbb
F\to M\otimes_{\mathcal R}\mathbb F\to
M^{\mathrm{free}}\otimes_{\mathcal R}\mathbb F\to 0.$$ Moreover,
$$M^{\mathrm{tors}}\otimes_{\mathcal R}\mathbb
F\cong\mathrm{Tor}_{\mathcal R}(M,\mathbb F).$$
\end{lem}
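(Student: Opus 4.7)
The plan is to exploit that $\mathcal R=\F[T,T^{-1}]$ is a PID (it is a localization of the PID $\F[T]$), together with the particularly simple resolution
\[
0\to \mathcal R \xrightarrow{\ T-1\ } \mathcal R \to \F \to 0
\]
of $\F$ as an $\mathcal R$-module.

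For the short exact sequence, I would apply the right-exact functor $-\otimes_{\mathcal R}\F$ to
\[
0\to M^{\mathrm{tors}}\to M\to M^{\mathrm{free}}\to 0
\]
and pass to the associated long exact Tor sequence. Since $M$ is finitely generated over the PID $\mathcal R$, the quotient $M^{\mathrm{free}}$ is finitely generated and torsion-free, hence free by the structure theorem, and in particular flat. Thus $\mathrm{Tor}_{\mathcal R}(M^{\mathrm{free}},\F)=0$, and the long exact Tor sequence collapses to the displayed short exact sequence.

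For the isomorphism $M^{\mathrm{tors}}\otimes_{\mathcal R}\F\cong \mathrm{Tor}_{\mathcal R}(M,\F)$, I would compute both sides using the resolution above. This identifies $\mathrm{Tor}_{\mathcal R}(M,\F)$ with $\ker(T-1\co M\to M)$ and $M\otimes_{\mathcal R}\F$ with $\mathrm{coker}(T-1\co M\to M)$. Any element killed by $T-1$ is by definition torsion, so $\ker(T-1\co M\to M)=\ker(T-1\co M^{\mathrm{tors}}\to M^{\mathrm{tors}})$, and similarly $M^{\mathrm{tors}}\otimes_{\mathcal R}\F=\mathrm{coker}(T-1\co M^{\mathrm{tors}}\to M^{\mathrm{tors}})$. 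By the structure theorem, $M^{\mathrm{tors}}$ is a finite direct sum of modules of the form $\mathcal R/(p^n)$ with $p\in\F[T]$ irreducible, each of which is finite-dimensional over $\F$; hence $M^{\mathrm{tors}}$ is a finite-dimensional $\F$-vector space. Multiplication by $T-1$ is an $\F$-linear endomorphism of this finite-dimensional space, so its kernel and cokernel have equal $\F$-dimension and are therefore isomorphic as $\F$-vector spaces.

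There is no significant obstacle; the only mild subtlety is being careful that $\ker$ and $\mathrm{coker}$ are compared only after restricting $T-1$ to $M^{\mathrm{tors}}$, which is where finite $\F$-dimensionality applies. Everything else is a mechanical application of the structure theorem for finitely generated modules over a PID and the two-term resolution of $\F$.
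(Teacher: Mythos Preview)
Your argument is correct. The first half, deriving the short exact sequence from the long exact $\mathrm{Tor}$ sequence and the freeness of $M^{\mathrm{free}}$, is exactly what the paper does.

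For the isomorphism $M^{\mathrm{tors}}\otimes_{\mathcal R}\F\cong\mathrm{Tor}_{\mathcal R}(M,\F)$ your route differs slightly from the paper's. The paper decomposes $M^{\mathrm{tors}}\cong\bigoplus_i\mathcal R/(p_i^{k_i})$ via the structure theorem and checks each summand separately: if $p_i$ is not associate to $T-1$ then both $\mathcal R/(p_i^{k_i})\otimes_{\mathcal R}\F$ and $\mathrm{Tor}_{\mathcal R}(\mathcal R/(p_i^{k_i}),\F)$ vanish, while if $p_i$ is associate to $T-1$ both are one-dimensional. You instead use the two-term resolution of $\F$ to identify the two sides with the kernel and cokernel of $T-1$ acting on $M^{\mathrm{tors}}$, and then invoke rank--nullity on the finite-dimensional $\F$-vector space $M^{\mathrm{tors}}$. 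Both arguments rest on the structure theorem (you need it to see $M^{\mathrm{tors}}$ is $\F$-finite-dimensional), but your dimension count is a bit cleaner and avoids the case split; the paper's version has the advantage of exhibiting the isomorphism summand by summand. Since both sides are $\F$-modules, equality of $\F$-dimension is indeed enough for the claimed isomorphism.
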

\begin{proof}
The short exact sequence
$$0\to M^{\mathrm{tors}}\to M\to M^{\mathrm{free}}\to 0$$
gives rise to a long exact sequence
$$\cdots\to\mathrm{Tor}_1^{\mathcal R}(M^{\mathrm{free}},\mathbb F)\to M^{\mathrm{tors}}\otimes_{\mathcal R}\mathbb
F\to M\otimes_{\mathcal R}\mathbb F\to
M^{\mathrm{free}}\otimes_{\mathcal R}\mathbb F\to 0.$$ Since
$M^{\mathrm{free}}$ is free, $\mathrm{Tor}_{\mathcal
R}(M^{\mathrm{free}},\mathbb F)=0$, hence we have the desired
short exact sequence.

Since $\mathcal R$ is a principal ideal domain $$M^{\mathrm{tors}}\cong\bigoplus_i \mathcal
R/(p_i^{k_i}),$$ where $p_i$'s are prime elements in $\mathcal R$,
$k_i\in\mathbb Z_{\ge1}$. Note that $\mathbb F\cong\mathcal
R/(T-1)$. If $p_i\ne(T-1)$ up to a unit, then
$$\mathcal
R/(p_i^{k_i})\otimes_{\mathcal R}\mathbb F=0,\quad
\mathrm{Tor}_{\mathcal R}(\mathcal R/(p_i^{k_i}),\mathbb F)=0.$$
If $p_i=(T-1)$ up to a unit, then
$$\mathcal
R/(p_i^{k_i})\otimes_{\mathcal R}\mathbb F\cong \F,\quad
\mathrm{Tor}_{\mathcal R}(\mathcal R/(p_i^{k_i}),\mathbb F)\cong
p_i^{k_i-1}\mathcal R/(p_i^{k_i})\cong \F.$$ Hence our result
follows.
\end{proof}

\bigskip
\noindent Theorem~\ref{thm:HFmodUnique}  states that if  $\widehat{HF}(Y;\F)\cong \F[X_1,...,X_{n-1}]/(X_1^2,..., X_{n-1}^2)$ as a module,   then $Y\cong M\#(\#^{n-1}(S^1\times S^2))$, where $M$ is an integer homology sphere satisfying $\HFa(M)\cong \F$.   We turn to  the proof of this theorem.
\bigskip

\noindent {\bf Proof of Theorem~\ref{thm:HFmodUnique}.}
We first reduce to the case that $Y$ is irreducible.  Suppose  that $Y$ is a nontrivial connected sum.  Then we can apply (\ref{eq:ConnSum}) to restrict our attention to a connected summand. If $Y=S^1\times S^2$, then our conclusion holds. So we may assume that $Y$ is irreducible. 

Let $$\Lambda_{n-1}=\mathbb
F [X_1,\dots,X_{n-1}]/(X_1^2,\dots,X_{n-1}^2).$$ Let
$\zeta_1,\dots,\zeta_{n-1}$ be elements in $H_1(Y;\mathbb
Z)/\mathrm{Tors}$ such that $\Curve^{\zeta_i}(\mathbf 1)=X_i$. If
$S=\{i_1,\dots,i_k\}\subset\{1,\dots,n\}$, let
$$\Curve^S=\Curve^{\zeta_{i_1}}\circ\cdots\circ \Curve^{\zeta_{i_k}}$$
and let $$X_S=X_{i_1}X_{i_2}\cdots X_{i_k}\in\widehat{HF}(Y;\mathbb F )\cong\Lambda_{n-1}.$$ It follows that $\Curve^S(\mathbf 1)=X_S$. Since
$\Curve^{[\zeta_i]}$ decreases the Maslov grading by $1$, we can give a
relative Maslov grading to $\widehat{HF}(Y;\mathbb F )$
such that the grading of $X_S$ is $n-1-|S|$.

By
Corollary~\ref{cor:ConnNonzero}, there exists 
$[\omega]\in H^2(Y;\mathbb Z)$ such that
$H(\underline{\widehat{HF}}(Y;\mathcal
R_{[\omega]}),\Curve^{[\zeta]})$ is non-torsion for any $[\zeta]\in
H_1(Y)$. Let
$C=\underline{\widehat{CF}}(Y;\mathcal R_{[\omega]})$. Let
$H^{\mathrm{tors}}(C)$ be the submodule of $H(C)$ which consists
of all torsion elements in $H(C)$, and let
$H^{\mathrm{free}}(C)=H(C)/H^{\mathrm{tors}}(C)$. By
Theorem~\ref{thm:UCT}, there is a short exact sequence
\begin{equation}\label{eq:ModExact}
\begin{CD}
0\to H_*(C)\otimes_{\mathcal R}\mathbb
F @>\mu_*>>H_*(C\otimes_{\mathcal R}\mathbb
F )@>\tau_*>>\mathrm{Tor}_{\mathcal R}(H_{*-1}(C),\mathbb
F )\to0.
\end{CD}
\end{equation}
Moreover, by the functoriality of (\ref{eq:ModExact}), the three
groups are $\Lambda_{n-1}$--modules such that the exact sequence
respects the module structure. Recall that $$H_*(C\otimes_{\mathcal R}\F)\cong\widehat{HF}(Y;\F)\cong\Lambda_{n-1}.$$

\noindent{\bf Claim 1.} The map $\mu_{n-1}$ is zero.

If the map $\mu_{n-1}$ is nonzero, then it is an isomorphism since
$H_{n-1}(C\otimes_{\mathcal R}\mathbb F )$ is one-dimensional.
Since $\mathbf 1\in H_{n-1}(C\otimes_{\mathcal R}\mathbb F )$
generates the module $H_{*}(C\otimes_{\mathcal R}\mathbb F )$,
the map $\mu_*$ is surjective. Hence $\mu_*$ is an isomorphism and
$\mathrm{Tor}_{\mathcal R}(H(C),\mathbb F )=0$.
Lemma~\ref{lem:Tor} implies that
$H^{\mathrm{tors}}(C)\otimes_{\mathcal R}\mathbb F =0$ and the
module $H(C)\otimes_{\mathcal R}\mathbb F $ is isomorphic to the
module $H^{\mathrm{free}}(C)\otimes_{\mathcal R}\mathbb F $. Now
we have
$$H(H^{\mathrm{free}}(C)\otimes_{\mathcal R}\mathbb
F ,\Curve^{\zeta_1})\cong H(H(C)\otimes_{\mathcal R}\mathbb
F ,\Curve^{\zeta_1})\cong H(H(C\otimes_{\mathcal R}\mathbb
F ),\Curve^{\zeta_1})\cong 0,$$ a contradiction to the fact that
$H(H(C),\Curve^{\zeta_1})$ has positive rank.

\noindent{\bf Claim 2.} The map $\tau_{1}$ is zero.

If $\tau_1$ is nonzero, then there exists an $(n-2)$--element
subset $S\subset\{1,\dots,n-1\}$ such that $\tau_1(X_S)\ne0$. We
claim that the $2^{n-2}$ elements
$$\Curve^{S'}\circ\tau(\mathbf 1),\quad S'\subset S$$ are linearly
independent over $\mathbb F $. In fact, suppose $S_1,\dots,S_m$
are subsets of $S$ with $|S_i|=k$, we want to show that
\begin{equation}\label{eq:LinInd}
\sum_i\Curve^{S_i}\circ\tau(\mathbf 1)\ne0.\end{equation} Apply
$\Curve^{S\backslash S_1}$ to the left hand side of (\ref{eq:LinInd}).
Since $(S\backslash S_1)\cap S_i\ne\emptyset$ for all $i\ne1$,
$\Curve^{S\backslash S_1}\Curve^{S_i}=0$ when $i\ne1$. So we get
\begin{eqnarray*}
\Curve^{S\backslash S_1}(\sum_i\Curve^{S_i}\circ\tau(\mathbf1))
&=&\Curve^{S\backslash S_1}\Curve^{S_1}\circ\tau(\mathbf 1)\\
&=&\Curve^S\circ\tau(\mathbf 1)\\
&=&\tau\circ \Curve^S(\mathbf 1)\\
&=&\tau(X_S)\\
&\ne&0.
\end{eqnarray*}
So (\ref{eq:LinInd}) holds.

Now we have proved that the rank of $\mathrm{Tor}_{\mathcal
R}(H(C),\mathbb F )$ is at least $2^{n-2}$, which is half of the
rank of $H_*(C\otimes_{\mathcal R}\mathbb F )$.  By (\ref{eq:ModExact}), we have $$ H_*(C\otimes_{\mathcal R}\mathbb F )\cong (H(C)\otimes_{\mathcal R}\F)\oplus\mathrm{Tor}_{\mathcal R}(H(C),\F).$$ Using Lemma~\ref{lem:Tor}, we see that
$H^{\mathrm{free}}(C)\otimes_{\mathcal R}\mathbb F =0$, which
contradicts the fact that $H(C)$ has positive rank. This finishes the
proof of Claim~2.

By Claim~2 we have $\mathrm{Tor}_{\mathcal R}(H_0(C),\mathbb
F )=0$. So
\begin{equation}\label{eq:TorsF=0}
H_0^{\mathrm{tors}}(C)\otimes_{\mathcal R}\mathbb F =0
\end{equation}
 by
Lemma~\ref{lem:Tor}. By Claim~1
$$H_{n-2}^{\mathrm{tors}}(C)\otimes_{\mathcal R}\mathbb
F \cong\mathrm{Tor}_{\mathcal R}(H_{n-2}(C),\mathbb F )\ne0.$$
Let $u\in H_{n-2}^{\mathrm{tors}}(C)\otimes_{\mathcal R}\mathbb
F $ be a nonzero element, then $\mu(u)\ne0$ and there exists an
$(n-2)$--element subset $S\subset\{1,\dots,n-1\}$ such that
$\Curve^S\circ\mu(u)$ is the generator of $H_0(C\otimes_{\mathcal
R}\mathbb F)$. Thus $\mu\circ \Curve^S(u)\ne0$.

Since $u\in H_{n-2}^{\mathrm{tors}}(C)\otimes_{\mathcal R}\mathbb
F $, $\Curve^S(u)\in H_0^{\mathrm{tors}}(C)\otimes_{\mathcal R}\mathbb F \cong0$ by (\ref{eq:TorsF=0}).
Thus $\mu\circ \Curve^S(u)=0$, a contradiction. \qed

\bigskip

\noindent With the detection theorem in hand, we can easily prove that the Khovanov module detects unlinks:
\bigskip

\noindent {\bf Proof of Theorem~\ref{thm:main}.}
It follows from the module structure of $Kh(L)$ that
$\Khr(L,L_0)\cong\Lambda_{n-1}=\F[X_1,\dots,X_{n-1}]/(X_1^2,\dots,X_{n-1}^2)$.
By Proposition \ref{collapse}, $\widehat{HF}(\Sigma(L))\cong\Lambda_{n-1}$ as a module.

By Theorem~\ref{thm:HFmodUnique}, $\Sigma(L)\cong M\#(\#^{n-1}(S^1\times S^2))$, where $M$ is an integral homology sphere with $\widehat{HF}(M)\cong\F$.
If a link $J$ is non-split, then $\Sigma(J)$ does not contain an $S^1\times
S^2$ connected summand; on the other hand, if $J=J_1\sqcup J_2$, then $\Sigma(J)=\Sigma(J_1)\#\Sigma(J_2)\#(S^1\times S^2)$ \cite[Proposition~5.1]{HN}. Applying this fact to the link $L$ at hand, it follows that $L=L_0\sqcup L_1\sqcup\cdots\sqcup L_{n-1}$. Since $Kh(J_1\sqcup J_2)\cong Kh(J_1)\otimes Kh(J_2)$, each $L_i$
has $\mathrm{rank}\:Kh(L_i)=2$. It follows from \cite{KM2010} that each $L_i$ is an unknot, so $L$ is an unlink.
\qed

\end{document}